\numberwithin{equation}{section}
\newtheoremstyle{fancy1}{10pt}{10pt}{\itshape}{12pt}{\textsc\bgroup}{.\egroup}{8pt}{
}
\newtheoremstyle{fancy2}{10pt}{10pt}{}{12pt}{\itshape}{.}{8pt}{ }
\theoremstyle{fancy1}
\newtheorem{cor}[equation]{Corollary}
\newtheorem{prop}[equation]{Proposition}
\newtheorem{thm}[equation]{Theorem}
\newtheorem*{main*}{Theorem}
\newtheorem*{cor*}{Corollary}
\newtheorem*{problem*}{Problem}
\renewcommand{\thetable}{\theequation}
\theoremstyle{fancy2}
\newtheorem*{rem*}{Remark}
\newtheorem*{rems*}{Remarks}
\newcommand{\cref}[1]{Corollary~\ref{#1}}
\newcommand{\pref}[1]{Proposition~\ref{#1}}
\newcommand{\tref}[1]{Theorem~\ref{#1}}
\newcommand{\sref}[1]{Section~\ref{#1}}
\newcommand{\gs}{\sigma}
\newcommand{\e}{\epsilon}
\newcommand{\RP}{\mathbb{R\mkern1mu P}}
\newcommand{\CP}{\mathbb{C\mkern1mu P}}
\newcommand{\Sph}{\mathbb{S}}
\newcommand{\C}{{\mathbb{C}}}
\newcommand{\R}{{\mathbb{R}}}
\newcommand{\Z}{{\mathbb{Z}}}
 \newcommand{\Q}{{\mathbb{Q}}}
\newcommand{\QH}{{\mathbb{H}}}
\renewcommand{\H}{\ensuremath{\operatorname{H}}}
\newcommand{\SO}{\ensuremath{\operatorname{SO}}}
\newcommand{\Sp}{\ensuremath{\operatorname{Sp}}}
\newcommand{\U}{\ensuremath{\operatorname{U}}}
\newcommand{\SU}{\ensuremath{\operatorname{SU}}}
\newcommand{\T}{\ensuremath{\operatorname{T}}}
\renewcommand{\S}{\ensuremath{\operatorname{S}}}
\def\con#1=#2(#3){#1 \equiv #2 \bmod{#3}}
\newcommand{\sign}{\ensuremath{\operatorname{sign}}}
\newcommand{\signs}{\ensuremath{\operatorname{sgn}}}
\newcommand{\tr}{\ensuremath{\operatorname{tr}}}
\newcommand{\diag}{\ensuremath{\operatorname{diag}}}
\DeclareMathOperator{\Id}{Id} 
\newcommand{\no}{\noindent}
\begin{document}

\title{Topology of non-negatively curved manifolds}

\author{Christine Escher}
\address{Oregon State University\\
   Corvallis, OR 97331}
\email{tine@math.orst.edu}
\author{Wolfgang Ziller}
\address{University of Pennsylvania\\
   Philadelphia, PA 19104}
\email{wziller@math.upenn.edu}
\thanks{
The first author was supported by a grant from the Association for
Women in Mathematics, by the University of Pennsylvania and by IMPA.
The second
 author was supported by  a grant from the National Science Foundation, the Max Planck Institute
 in Bonn and CAPES. }

\maketitle

An important question in the study of Riemannian manifolds of
positive sectional curvature is how to distinguish manifolds that
admit a metric with non-negative sectional curvature from those that
admit one of positive curvature. Surprisingly, if the manifolds are
compact and simply connected, all known obstructions to positive
curvature are already obstructions to non-negative curvature. On the
other hand, there are  very few known examples of manifolds with
positive curvature. They consist, apart from the rank one symmetric
spaces, of certain homogeneous spaces $G/H$ in dimensions
$6,7,12,13$ and $24$ due to Berger \cite{Be}, Wallach \cite{Wa}, and
Aloff-Wallach \cite{AW}, and of biquotients $K\backslash G/H$ in
dimensions $6,7$ and $13$ due to Eschenburg \cite{E1},\cite{E2} and
Bazaikin \cite{Ba}, see \cite{Zi} for a survey. Recently, a new
example of a positively curved 7-manifold was found which is
homeomorphic but not diffeomorphic to the unit tangent bundle of $\Sph^4$, see
\cite{GVZ,De}. And in \cite{PW} a method was proposed  to construct
a metric of positive curvature on the Gromoll-Meyer exotic 7-sphere.

Among the known examples of positive curvature there are two
infinite families: in dimension 7 one has the homogeneous
Aloff-Wallach spaces,  and more generally the Eschenburg
biquotients, and in dimension 13 the Bazaikin spaces. The topology
of these manifolds has been studied extensively, see
\cite{KS1,KS2,AMP1,AMP2,Kr1,Kr2,Kr3,Sh,CEZ,FZ}. There exist many
7-dimensional positively curved examples which are homeomorphic to
each other but not diffeomorphic, whereas in dimension 13, they are
conjectured to be diffeomorphically distinct \cite{FZ}.

In contrast to the positive curvature setting, there exist
comparatively many examples with non-negative sectional curvature.
The bi-invariant metric on a compact Lie group $G$ induces, by
O'Neill's formula, non-negative curvature on any homogeneous space
$G/H$ or more generally on any biquotient $K\backslash G/H$. In
\cite{GZ1} a large new family of cohomogeneity one manifolds with
non-negative curvature was constructed, giving rise to
non-negatively curved metrics on exotic spheres. Hence it is natural
to ask whether, among the known examples,  it is possible to
topologically distinguish manifolds with non-negative curvature from
those admitting positive curvature. The purpose of this article is
to address this question. There are  many examples of non-negatively
curved manifolds which are not homotopy equivalent to any of the
known positively curved examples simply because they have different
cohomology rings. But recently new families  of non-negatively
curved manifolds were discovered \cite{GZ2} which, as we will see,
give rise to several new manifolds having the same cohomology ring
as the 7-dimensional Eschenburg spaces.
\smallskip

Recall that
the Eschenburg biquotients are defined as
$$
E_{k,\; l}=\diag(z^{k_1}, z^{k_2}, z^{k_3})\backslash \SU(3)/
\diag(z^{l_1}, z^{l_2}, z^{l_3})^{-1},
$$
with $k:=(k_1,k_2,k_3),\,l:=(l_1,l_2,l_3),\,k_i,l_i \in \Z$,
{\small$\sum$} $\!k_i=$ {\small$\sum$} $\!l_i$,  and $z \in S^1
\subset \C$.  They include the homogeneous Aloff-Wallach spaces
$W_{a,b}=\SU(3)/\diag(z^{a},z^{b},\bar{z}^{a+b})$ and the manifolds
$F_{a,b}$ with $k=(a,b,a+b)$ and $l=(0,0,2(a+b))$. Under certain
conditions on $k,l$, see \eqref{pos}, they admit positive sectional
curvature.

We will consider compact simply connected seven dimensional manifolds $M$  whose non-trivial cohomology  groups consist of
 $H^i(M;\Z)\cong \Z$ for $i=0,2,5,7$ and
 $H^4(M,\Z)\cong \Z_r,\ r\ge 1$, where the square of a generator of $H^2(M; \Z)$
 generates $H^4(M,\Z)$. If in addition $M$ is spin, we say that $M$ has {\it cohomology type $E_r$}
and if $M$ is non-spin, {\it cohomology type $\bar{E}_r$}.
  In \cite{E1} it was shown that an
 Eschenburg space
 is of cohomology type $E_r$ with  $r= |\sigma_2(k)-\sigma_2(l)|$, where $\sigma_i(k)$
stands for the elementary symmetric polynomial of degree $i$ in
$k_1,k_2,k_3$.   M. Kreck and S. Stolz defined certain invariants and showed that they classify manifolds of
cohomology type $E_r$ and $\bar{E}_r$ up to homeomorphism and diffeomorphism \cite{Kr,KS1,KS2}.
The invariants were computed for most Eschenburg spaces in \cite{Kr3,CEZ}.

\smallskip

One class of manifolds we will study are  the total spaces of 3-sphere bundles over $\CP^2$. They fall into two categories, bundles which are not spin,
$$ \Sph^3\to S_{a,\, b }\to \CP^2 ,\, p_1=2a+2b+1 ,\, e=a-b  ,\, w_2\ne 0 $$
and bundles which are spin
$$ \Sph^3\to \bar S_{a,\, b }\to \CP^2 ,\, p_1=2a+2b  ,\, e=a-b  ,\, w_2 = 0 .$$
They are specified by the value of their Pontryagin class, Euler class and Stiefel-Whitney class. Here $a,b$ are arbitrary integers.

\smallskip

A second class of manifolds can be described
as follows. Consider the bundles
$$
\CP^1 \to N_t\to \CP^2 \quad , \quad \Sph^1\to M^t_{a,\, b }\to  N_t \,.
$$
Here $N_t$ is the $\Sph^2$ bundle with Pontryagin class $p_1=1-4t$
and $w_2\ne 0$, and $M^t_{a,\, b }$  the circle bundle classified by
the Euler class  $e=ax+by$ in terms of some natural basis $x,y\in
H^2(N_t,\Z)\cong\Z^2$, where $a,b$ are relatively prime integers.

Similarly, let
$$
\CP^1 \to \bar N_t\to \CP^2 \quad , \quad \Sph^1\to \bar M^t_{a,\, b }\to  N_t \,
$$
where $\bar N_t$ is the $\Sph^2$ bundle with Pontryagin class $p_1=4t,\ w_2=0 $, and $\bar M^t_{a,\, b }$ the circle bundle with Euler class described by  $a,b$ as above.
\bigskip
We will show:

\begin{main*}The above manifolds have the following properties:
  \begin{itemize}
\item[(a)]  $S_{a,\, b },\ M^t_{a,\, b }$ and $\bar M^t_{a,\, 2b }$ have cohomology type $E_r$, and $\bar S_{a,\, b },\ \bar M^t_{a,\, 2b+1 }$
have cohomology type $\bar E_r$.
 \item[(b)] $S_{a,\, b },\ M^t_{a,\, b }$ and $\bar M^{2t}_{a,\, b }$ admit metrics with non-negative sectional curvature.
\item[(c)]  $M^t_{a,1-a}$ is  diffeomorphic to $S_{-t,a(a-1)}$, and $\bar M^t_{a,1}$ is  diffeomorphic to $\bar S_{t, a^2}$.
 \item[(d)]  $M^1_{a,\, b }$ is the Aloff-Wallach space $W_{a,\, b }$ with base space $N_1=\SU(3)/\T^2$,
  and $M^{-1}_{a,\, b }=F_{a,b}$ with base space the biquotient
  $N_{-1}=\SU(3)/\!/ \T^2$.
 \item[(e)] $ M^0_{a,\, b }$ is the set of circle bundles over
 $N_0=\CP^3\#\overline{\CP^3}$ and  $\bar M^0_{a,\, b }$  the
 set of circle bundles over $\bar N_0=\CP^2\times\CP^1$.
 \end{itemize}
\end{main*}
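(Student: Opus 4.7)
My plan is to handle the five parts essentially independently, since they call on different tools: (a) is a direct cohomology calculation via Gysin sequences, (b) uses the non-negatively curved metrics of \cite{GZ2} together with Riemannian submersion arguments, and (c)--(e) are topological identifications via characteristic classes, supplemented where needed by the Kreck--Stolz classification.

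For (a), for the $\Sph^3$-bundles $S_{a,b}$ and $\bar S_{a,b}$ over $\CP^2$ with Euler class $a-b$, the Gysin sequence gives $H^2\cong\Z$ and $H^4\cong\Z_{|a-b|}$, with the square of a generator of $H^2$ generating $H^4$; the spin type follows from the given $w_2$. For $M^t_{a,b}$ and $\bar M^t_{a,b}$, I would first use the $\Sph^2$-bundle cohomology formula to compute $H^*(N_t)\cong\Z[x,y]/(x^3,\,y^2-cxy+dx^2)$ with $c,d$ determined by $p_1(N_t)$, and then run the circle-bundle Gysin sequence with Euler class $ax+by$. A check that $H^2$ has a single $\Z$-summand whose square generates the $\Z_r$ in degree four finishes the cohomology type, and the parity condition on the second index in the spin-base case is exactly the condition for the non-trivial $w_2$ of $\bar N_t$ to lift trivially (respectively non-trivially) to the circle bundle.

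For (b), the bases $N_t$ and $\bar N_{2t}$ carry non-negatively curved metrics by \cite{GZ2}; a principal connection metric on the circle bundles $M^t_{a,b}\to N_t$ and $\bar M^{2t}_{a,b}\to\bar N_{2t}$, obtained by combining the base metric with a bi-invariant metric on $\Sph^1$, then gives non-negative curvature on the total space via O'Neill. For $S_{a,b}$, I would realize the $\Sph^3$-bundle as an associated bundle to a principal bundle appearing in \cite{GZ2} and apply O'Neill again. The exclusion of odd twists in the $\bar M$ family reflects the range of $t$ for which \cite{GZ2} produces a suitable metric on $\bar N_t$.

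For (c), one checks that for $b=1-a$ the tower $M^t_{a,1-a}\to N_t\to\CP^2$ admits an alternate description as an $\Sph^3$-bundle over $\CP^2$; comparing the Euler and Pontryagin classes of the assembled rank-four vector bundle with those of $S_{-t,a(a-1)}$ (and similarly $\bar M^t_{a,1}$ with $\bar S_{t,a^2}$) matches the invariants and gives the diffeomorphism. For (d), I would identify $\SU(3)/\T^2$ with $N_1$ by matching $p_1$ and $w_2$ among $\CP^1$-bundles over $\CP^2$, and $\SU(3)/\!/\T^2$ with $N_{-1}$ analogously; the standard circle-bundle descriptions of the Aloff--Wallach space $W_{a,b}$ and of $F_{a,b}$ over these bases then recover the prescribed Euler class $ax+by$. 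For (e), $\CP^3\#\overline{\CP^3}$ has $p_1=1$ and $w_2\neq 0$, which characterizes $N_0$ via the Wall--Jupp classification of simply connected six-manifolds with torsion-free cohomology, and $\bar N_0=\CP^2\times\CP^1$ follows by the analogous invariant check. The main obstacle I expect is (c), where one must identify the alternate $\Sph^3$-bundle structure over $\CP^2$, track all characteristic classes through the iterated fibration, and verify the spin type in order to upgrade the resulting bundle isomorphism to a diffeomorphism rather than only a homotopy equivalence.
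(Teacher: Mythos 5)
Your outlines for (a), (d) and (e) follow the paper in spirit, and your plan for $S_{a,b}$ in (b) (associated bundle of a principal bundle from \cite{GZ2}, then O'Neill on the quotient) is fine; but there is a genuine gap in (b) for the circle bundles, and it also undercuts your plan for (c). You propose to put a principal connection metric on $M^t_{a,b}\to N_t$ and $\bar M^{2t}_{a,b}\to\bar N_{2t}$ over non-negatively curved bases and ``apply O'Neill''. O'Neill's formula runs in the opposite direction: it makes the base of a Riemannian submersion at least as curved as the total space, and for a connection metric the horizontal curvatures upstairs are those of the base minus $3|A|^2$, so non-negative curvature on $N_t$ gives no conclusion about $M^t_{a,b}$ (producing non-negative curvature on bundle total spaces is exactly the hard part). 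The paper instead realizes these circle bundles as quotients: \pref{same} shows $M^t_{a,b}=P_t/\S^1_{a,b}$, where $P_t$ is the $\U(2)$ principal bundle over $\CP^2$ with $c_1=x$, $c_2=t\,x^2$; by \cite{GZ2} $P_t$ carries a $\U(2)$-invariant metric of non-negative curvature since $w_2\ne 0$, and O'Neill is applied to the submersion $P_t\to P_t/\S^1_{a,b}$, with $M^t_{a,b}$ as the quotient. Likewise $\bar M^{2t}_{a,b}=\bar P'_{2t}/\S^1_{-b,a}$ by \pref{samespin}, and the restriction to even twist comes from the hypothesis in \cite{GZ2} that $c_1^2-4c_2$ (which equals $4t$ for $\bar P^*_t$) be divisible by $8$; it is not a condition about a metric on the base $\bar N_t$.

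The same missing identification is the technical heart of (c) and (d). In (c) you want to compare Euler and Pontryagin classes of an ``assembled rank-four vector bundle'', but you have not said why $M^t_{a,1-a}\to\CP^2$ carries a linear $\Sph^3$-bundle structure, nor how you would compute the $p_1$ and $e$ of the corresponding $\SO(4)$ bundle; $p_1(TM)$ is only known modulo $r$, so matching manifold invariants does not determine the bundle, and a bundle-isomorphism argument needs the bundle data. In the paper everything flows from the principal-bundle picture: $M^t_{p,1-p}=P_t\times_{\U(2)}\Sph^3$ via the representation $\rho_p(A)=(\det A)^{-p}A$, and $p_1$ of the two associated $\SO(3)$ bundles $P^*_\pm$ is computed from \eqref{chernpm} and \eqref{cherncircle}, the orientation being pinned down by linking forms (\pref{diff}, \pref{diffspin}). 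Similarly, in (d) the claim that the standard circle-bundle description of $W_{a,b}$ over $\SU(3)/\T^2$ ``recovers the prescribed Euler class'' is precisely the content of \pref{same} (the computations $c_1(P_t/\S^1_{0,1})=y$ and $c_1(P_t/\S^1_{1,-1})=x$), which is not automatic. Finally, in (e) your Wall--Jupp route can be made to work only if you check the full cohomology ring and the tangential $p_1$ and $w_2$ of the 6-manifold $N_0$; as stated you conflate the invariants of the $\SO(3)$ bundle with those of its total space, whereas the paper simply exhibits $N_0=\Sph^5\times_{\S^1}\Sph^2$ and $\bar P'_0=\Sph^5\times\Sph^3$ directly.
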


\smallskip
The existence of the metrics in part (b) will follow from \cite{GZ2}
after we describe the manifolds in a different fashion, namely as
quotients of certain $\U(2)$ principal bundles over $\CP^2$. Part
(c) and (d) imply that the circle bundles $M^1_{a,b},\ ab(a+b)\ne
0$, and $M^{-1}_{a,b},\ ab>0$  as well as the sphere bundles $
S_{-1,a(a-1)},\ a\ge 2$ naturally admit a metric with positive
sectional curvature. By computing the Kreck-Stolz invariants, we
obtain a diffeomorphism classification of the above four classes of
7-manifolds and by comparing them to the invariants for Eschenburg
spaces, we will obtain many other diffeomorphisms of $M^t_{a,b}$ and
$S_{a,b}$ to positively curved Eschenburg spaces. For example:

\begin{itemize}
\item
 $S_{a,\, b }$ with $a-b=41$ is diffeomorphic to a positively
curved  Eschenburg space   if and only if $b\equiv 2285 \text{
or } 5237 \mod 6888 $. In this case it is diffeomorphic to the
cohomogeneity two Eschenburg space $E_{k,l}$ with $k=(2,3,7)\; ,
\; l=(12,0,0)$.
\item
$M^t_{a,\, b }$ with $(a,b,t) = ( 638,-607, -403)$ is
diffeomorphic to the positively curved cohomogeneity two
Eschenburg space $E_{k,l}$ with $k=(1, 2, 5)\; , \; l=(8,0, 0)$.
 \end{itemize}

See \sref{examples} and Table A and B for further examples. We will
also obtain a description of which Eschenburg spaces can be
diffeomorphic to $\Sph^3$ bundles over $\CP^2$, see \tref{Ediffeo}.

\smallskip

The  examples of diffeomorphisms above imply that besides the metric
of non-negative curvature the  manifolds $S_{a,b}$ and $M^t_{a,b}$
sometimes admit a very different metric which has positive
curvature.  This raises the question whether perhaps they all  admit
a metric of positive curvature.

\smallskip

Some of these manifolds are also known to admit Einstein metrics. In
\cite{W} M.Wang showed that the Aloff-Wallach spaces $M^1_{a,\, b
}$, and with W.Ziller in \cite{WZ} that the circle bundles $\bar
M^0_{a,\, b }$ over $\CP^1\times\CP^2$, all admit Einstein metrics.
In \cite{Che} D. Chen proved that the sphere bundles $S_{a,\, b }$
and $\bar S_{a,\, b }$ admit an Einstein metric if the structure
group reduces from $\SO(4)$ to $\T^2\subset\SO(4)$. Diffeomorphisms
within each of these 3 classes of Einstein manifolds have been
considered in \cite{KS1,KS2,Che}. Using our computation of the
invariants, we also find examples of diffeomorphism between
different classes:

\begin{itemize}
\item The two Einstein manifolds  $\bar M^0_{ 70, 5899 }$ and $\bar S_{ 62500, 57600
}$ are diffeomorphic to each other.
\item For the sphere bundle $\bar S_{a,\, b }$ with $(a,b)=(q^2,0)$
the structure group reduces to a 2-torus. This Einstein manifold
is  diffeomorphic to the Einstein manifold $\bar M^0_{q,\, 1 }$.
 \end{itemize}

 We will also see that among some of these classes there are no
 diffeomorphisms:

\smallskip

\begin{itemize}
\item There are no diffeomorphisms between the spin Einstein manifolds $M^1_{a,\, b}$ and
$\bar M^0_{a',\, 2b' }$.
\item There are no diffeomorphisms between the spin Einstein manifolds
$S_{a\, ,b}$  and either $M^1_{a',\, b'}$ or $\bar M^0_{a',\,
2b' }$ or an Eschenburg space.
 \end{itemize}

\smallskip

Here is a short description of the content of the paper. In Section 1 we collect preliminaries and in Section 2 we recall the
formulas for the Kreck-Stolz invariants. In Section 3 we
describe  the topology of the sphere bundles $S_{a,\, b }$ and their invariants and   in Section 4   the topology of the
circle bundles $M^t_{a,\, b }$.  In Section 5 we discuss the geometry of both families and
in Section 6 we examine the manifolds  $\bar{S}_{a,b}$ and $\bar{M}^t_{a,b}$.   In Section 7 we apply these results to
obtain various examples of diffeomorphisms as described above.

\section{Preliminaries}

We will compare several classes of manifolds of non-negative
curvature with the family of positively curved Eschenburg spaces
$E_{k,l}$ described in the introduction.  In order for an
Eschenburg space to be a manifold, i.e. in order for the $\S^1$
action on $\SU(3)$ to be free, we need
\begin{equation}\label{free}
\gcd(k_1-l_{i}\ , k_{2}-l_{j})=1, \text{ for all } i\neq j\,,\,i,j \in \{1,2,3\}\,.
\end{equation}

The {\it Eschenburg metric} on $E_{k,l}$ is the submersion metric
obtained by scaling the bi-invariant metric on $\SU(3)$ in the
direction of a subgroup  $\U(2)\subset\SU(3)$ by a constant less
than $1$. It has positive sectional curvature \cite{E2} if and only
if, for all $1\leq i \leq 3$,
\begin{equation}\label{pos}
k_i \notin [\min(l_1,l_2,l_3),\max(l_1,l_2,l_3)], \ \ \text{or}\ \
l_i \notin [\min(k_1,k_2,k_3),\max(k_1,k_2,k_3)].
\end{equation}
If this condition is satisfied, we call $E_{k,l}$ a {\it
positively curved Eschenburg space}.

\smallskip

There are two subfamilies of Eschenburg spaces that are of interest to us.
One is the family of
 homogeneous Aloff-Wallach spaces
$W_{p,\; q}=\SU(3)/\diag(z^{p},z^{q},\bar{z}^{p+q})$, where $p, q
\in \Z$ with $(p,q)=1$.  This space has a homogeneous metric with
positive sectional curvature if and only if $pq(p+q)\ne 0$. By
interchanging coordinates, and replacing $z$ by  $\bar{z}$ if
necessary, we can assume that $p\ge q \ge 0$, and thus $W_{1,0}$ is
the only Aloff-Wallach space that does not admit a homogeneous
metric with positive curvature. The second family consists of the
Eschenburg biquotients $F_{p,\; q}=E_{k,l}$ with $k=(p,q,p+q)$ and
$l=(0,0,2p+2q)$ with $(p,q)=1$.  We can also assume that $p\ge q$,
but here $p$ and $q$ can have opposite sign. The Eschenburg metric
on $F_{p,\; q}$ has positive sectional curvature if and only if $p\,
q> 0$. These two families of Eschenburg spaces are special in that
they admit circle fibrations over positively curved 6-manifolds. In
the case of the Aloff-Wallach spaces, the base of the fibration is
the homogeneous flag manifold $\SU(3)/\T^2$ and in case of the
Eschenburg biquotients $F_{p,\; q}$, the base is the inhomogeneous
Eschenburg flag manifold
$\SU(3)/\!/\T^2:=\diag(z,w,zw)\backslash\SU(3)/\diag(1,1,z^2w^2)^{-1}$,
$|z|=|w|=1$.

We  also use the fact that   the two Eschenburg spaces $W_{1,1}$ and
$F_{1,1}$ can be regarded as principal $\SO(3)$ bundles over
$\CP^2$, see \cite{Sh,Cha}.

\bigskip

Recall that we say that a compact simply connected seven dimensional spin manifold $M$ has {\it cohomology type} $E_r$, or simply is
of type $E_r$, if its cohomology ring is given by:
\begin{equation}\label{coh}
  H^0(M;\Z)\cong H^2(M;\Z)\cong H^5(M;\Z)\cong
  H^7(M;\Z)\cong \Z
  \text{ and } H^4(M;\Z)\cong \Z_r.
\end{equation}
with $r \ge 1$. Furthermore, if $u$ is a generator of
$H^2(M;\Z)$, then $u^2$ is a generator of $H^4(M;\Z)$, if $r>1$. If a manifold with cohomology ring \eqref{coh} is non-spin, we say it has type $\bar{E}_r$.

\smallskip

As was shown in  \cite{E1}, an Eschenburg space is of type $E_r$
with $r= |\sigma_2(k)-\sigma_2(l)|$ where $\sigma_i(k)$ stands for
the elementary symmetric polynomial of degree $i$ in $k_1,k_2,k_3$.
Furthermore, for an Eschenburg space $r$ is always odd (see
\tref{Ktop}), and examples of Eschenburg spaces exist for any odd
number $r\ge 3$, for example the cohomogeneity one manifolds
$k=(p,1,1)$ and $l=(p+2,0,0)$ with $r=2p+1$.
\smallskip

Let $M$ be a  manifold of cohomology type $E_r$ or $\bar{E}_r$.  If we fix
  a generator $u$ of $H^2(M;\Z)$, the class
$u^2$ is  a generator of $H^4(M;\Z)$, which does not depend on
the choice of $u$. We can thus identify the first Pontryagin class
$p_1(TM)\in H^4(M;\Z)\cong \Z_r$ with a well defined integer modulo
$r$. Furthermore, it is a
homeomorphism invariant. In the case of an Eschenburg space this integer is
$p_1(TE_{k,l})  = 2\,\sigma_1(k)^2 - 6\,\sigma_2(k) \mod r$, see \cite{Kr2}.

\smallskip

For manifolds of type $E_r$ or $\bar{E}_r$ we also have, besides $r$, a second
homotopy invariant given by the linking form $Lk$.  The linking form
is a quadratic form on $ H^4(M;\Z)\cong \Z_r$ with values in $\Q /
\Z$.  It is determined by the self-linking number
  $lk(M):=Lk(u^2,u^2)$ of the generator $u^2$ of $ H^4(M;\Z)$.  Following \cite{B} $Lk(u^2,u^2)$ can
  be written in terms of the Bockstein homomorphism $\beta: H^3(M;\Q/\Z) \longrightarrow H^4(M;\Z)$ which
  is associated to the exact sequence $0 \longrightarrow \Z \longrightarrow \Q \longrightarrow \Q/\Z \longrightarrow 0:$
$$Lk(u^2,u^2):=<\beta^{-1}(u^2),u^2 \cap [M]>.$$
Since in our case $H^4(M;\Z) \cong \Z_r$, it follows that $Lk(u^2,u^2)$ is
a unit in the subgroup of order $r$ in $\Q/\Z $ and we can thus
interpret $lk(M)$ as an integer modulo $r$. Unlike $p_1(TM)$, the
linking form $lk(M)$ is orientation sensitive. In the case of an
Eschenburg space we have $lk(E_{k,l})=\pm s^{-1} $ mod $r$, where
$s=\sigma_3(k)-\sigma_3(l)$ and $s^{-1}$ is the multiplicative
inverse in $\Z_r$, which is well defined since the freeness
condition implies $(s,r)=1$, see \cite{Kr2}.

\smallskip

As was observed by Kruggel, manifolds of type $E_r$ or $\bar E_r$
fall into two further homotopy types since $\pi_4(M)$ can only be
$0$ or $\Z_2$. Indeed, if we consider the circle bundle $\S^1\to
G\to M$ whose Euler class is a generator of $H^2(M,\Z)=\Z$, it
follows that $H^*(G,\Z) \cong H^*(\Sph^3\times\Sph^5,\Z)$ and hence the
attaching map of the 5 cell to the 3-skeleton is an element of
$\pi_4(\Sph^3) \cong \Z_2$. If the attaching map is trivial, $G$ has the
homotopy type of $\Sph^3\times\Sph^5$ and hence $\pi_4(M) \cong \Z_2$. A
second manifold $G$ with this homology is $G=\SU(3)$  and since
$\pi_4(\SU(3))=0$ , its attaching map is non-trivial. Thus
$\Sph^3\times\Sph^5$ and $\SU(3)$ are the only two possible homotopy
types for $G$.

 For
Eschenburg spaces we have $\pi_4(E_{k,l})=0$ since
$\pi_4(\SU(3))=0$.

\bigskip

Lastly, we discuss the relationship between principal $\SO(3)$ and
$\SO(4)$  bundles and their classification if the base is $\CP^2$,
see  \cite{DW} and \cite{GZ2}. Recall that
$\SO(4)=\S^3\times\S^3/\{\pm (1,1)\}$ defined by left and right
multiplication of unit quaternions on $\QH\cong\R^4$. Thus there are
two normal subgroups $$\S^3_-=\S^3\times\{e\}\, ,\,
\S^3_+=\{e\}\times\S^3\subset\SO(4)=\S^3\times\S^3/\{\pm (1,1)\}$$
isomorphic to $\S^3$ and $\SO(4)/\S^3_\pm$ is isomorphic to
$\SO(3)$. Hence, if $\SO(4)\to P \to M$ is a principal $\SO(4)$
bundle, there are two associated principal $\SO(3)$-bundles $$\SO(3)
\to P_\pm :=P/\S^3_\pm  \to M \text{ with } \SO(3) =  S^3/\{\pm
1\}.$$
 If $M$ is compact and simply connected,
 $P$ is uniquely determined by the $\SO(3)$ bundles $P_\pm$, see
\cite{GZ2}, Proposition 1.8. For the characteristic classes one has
\begin{equation}\label{ppm}
p_1(P_\pm)=p_1(P)\pm 2\,e(P) \; , \;  w_2(P)=w_2(P_\pm)\,.
\end{equation}
\noindent One can see this on the level of classifying spaces  by
computing the maps induced  in cohomology in the commutative diagram
\begin{equation*}
\begin{split}
\xymatrix{
B_{\S^3\times\S^3 }\ar[r] \ar[d]^{\pi_\pm} & B_{\SO(4)} \ar[d]^{/\S^3_\pm}\\
B_{\S^3 } \ar[r] & B_{\SO(3)}  }
\end{split}
\end{equation*}
where $\pi_\pm$ are induced by the projections onto the first and
second factor. On the level of maximal tori one has
$\diag(e^{i\theta},e^{i\psi})\subset \S^3\times\S^3 \to
\diag(R(\theta-\psi),R(\theta+\psi))\subset \SO(4)$ where
$R(\theta)$ is a rotation by angle $\theta$. Thus in the natural
basis of the second cohomology of the maximal tori, $x,y$ in the
case of
 $\SO(4)$ and $r,s$ in the case of
$\S^3\times\S^3$, one has $x\to r-s$ and $y\to r+s$ and since
$p_1(P)=x^2+y^2$ and $e(P)=xy$, we have
 $p_1(P) +2 \, e(P)=4\, r^2$ and
$p_1(P) -2 \, e(P)=4\, s^2$. The claim now follows by observing that the map
$H^4(B_{\SO(3)},\Z)\cong \Z \to H^4(B_{\S^3},\Z)\cong \Z$ is multiplication by $4$.
 Notice that this corrects a mistake in
\cite{GZ2}, (1.10), where $P_\pm$ was defined as $P/\S^3_\mp$.
 This will be crucial for us in Section 6.

\smallskip

It is also important for us to understand in detail the above
discussion in the context of $\U(2)$ principal bundles. Assume that
the structure group of an $\SO(4)$ principal bundle $P$ reduces to
$\U(2)$:
 $$\U(2)\to P^*\to M
\text{ and } P=P^*\times_{\U(2)}\SO(4),$$
 or equivalently the 4-dimensional
vector bundle corresponding to $P$ has a complex structure. We
identify $\C\oplus\C\cong \QH$ via $(u,v)\to u+vj$ so that left
multiplication by $z$ is the usual complex structure on $\R^4$. This
defines the embedding $\U(2)\subset\SO(4)$ and implies that
$\U(2)=\S^1\times\S^3/\{\pm (1,1)\} \subset S^3\times\S^3/\{\pm
(1,1)\} = \SO(4)$. Notice also that the image of $\S^1\times\{e\}$
is the center of $\U(2)$, and the image of $\{e\}\times\S^3$ is
$\SU(2)\subset \U(2)$.

\smallskip

For $P^*$ we have the Chern classes $c_1$ and $c_2$ and for the
underlying real bundle one has $p_1(P)=c_1^2-2c_2$, $e(P)=c_2$, and
$w_2(P)=c_1 \mod 2$. Thus \eqref{ppm} implies that
\begin{equation}\label{chern}
   p_1(P_-)=c_1^2-4c_2\,\,\; ,\,\,\;
p_1(P_+)=c_1^2 \,\  \text{ and }\ w_2(P_\pm)\equiv c_1 \mod 2\, .
 \end{equation}
\smallskip

We now discuss the relationship between the associated $\SO(3)$
principal bundles $P_\pm$ and the $U(2)$-reduction $P^*$ and claim
that:
 \begin{equation}\label{chernpm}
   P_-=P^*/Z\,\,\; ,\,\,\;
P_+=(P^*/\SU(2))\times_{\SO(2)}\SO(3)
 \end{equation}
 where $Z$ is the center of $\U(2)$.
 Indeed, if we set $\Gamma=\{\pm (1,1)\}$ when in  $\S^3\times\S^3$  and
 $\Gamma=\{\pm 1\}$ when in  $\S^3$, we get

 \smallskip

$$
 P_-=P/\S^3_-=\left[P^*\times_{\U(2)}\SO(4)\right]/\S^3\times\{e\}
 =\left[P^*\times_{(\S^1\times\S^3)/\Gamma}(\S^3\times\S^3)/\Gamma\right]
 /\S^3\times\{e\}
 $$

 $$
 =P^*\times_{{(\S^1\times\S^3)/\Gamma}}\left[\{e\}\times(\S^3/\Gamma)\right]=P^*/\left[(S^1/\Gamma)\times\{e\}\right]=P^*/Z
 $$

\bigskip

 and for the second bundle

 \smallskip

$$
 P_+=P/\S^3_+=\left[P^*\times_{(\S^1\times\S^3)/\Gamma}(\S^3\times\S^3)/\Gamma\right]
 /\{e\}\times\S^3=P^*\times_{(\S^1\times\S^3)/\Gamma}\left[(\S^3/\Gamma)\times\{e\}\right]
 $$

 $$
 \qquad\qquad =(P^*/\SU(2))\times_{(S^1/\Gamma)\times\{e\}}\left[(\S^3/\Gamma)\times\{e\}\right] =
 (P^*/\SU(2))\times_{\SO(2)}\SO(3)
 $$

 \smallskip

  Thus the structure group of
 $P_+$ reduces to $\SO(2)$. The principal bundle of this reduced bundle is

 \begin{equation}\label{cherncircle}
 \S^1\cong \U(2)/\SU(2) \to P^*/\SU(2)\to
 P^*/\U(2) \ \text{ with Euler class }\ e=c_1(P^*)
 \end{equation}

\no To see that this bundle indeed has Euler class $c_1(P^*)$, we
consider the commutative diagram of classifying spaces:
 \begin{equation*}
\begin{split}
\xymatrix{
P^*\ar[r] \ar[d] & B_{\U(2)} \ar[d]^{\det}\\
P^*/\SU(2)  \ar[r] & B_{\S^1}  }
\end{split}
\end{equation*}
The isomorphism $\U(2)/\SU(2)\cong \S^1$ is induced by the homomorphism
 $\det\colon\U(2)\to \S^1$. Since $\det(\diag(e^{i\theta},e^{i\psi}))=e^{i(\theta+\psi})$,
 the induced map  $B_{\U(2) }\to B_{\S^1}$ in cohomology
  takes $z\to x+y$ in the natural basis of the
 cohomology of the maximal tori, and since $c_1=x+y$, the claim follows.

\smallskip

We now specialize to the case where the base is 4-dimensional.
 Principal $\SO(4)$ bundles $P$ over a compact
simply connected 4-manifold are classified by the characteristic
classes $p_1(P)\ , e(P) $ and $w_2(P)$, and principal $\SO(3)$
bundles  by $p_1(P)$ and $w_2(P)$, see \cite{DW}. But these classes
cannot be assigned arbitrarily. To describe the restriction, we
identify $p_1(P)$ and $ e(P) $ with an integer, using a choice of an
orientation class. For an $\SO(3)$ principal bundle the value of
$w_2(P)$ is arbitrary. The value of $p_1(P)$ on the
other hand satisfies $p_1(P)\equiv e^2 \mod 4$ where $e \in
H^2(M,\Z)$ is the Euler class of a principal circle bundle with $e
\equiv w_2(P) \mod 2$.  Via equation \eqref{ppm} this completely
describes the possible values of the invariants for principal
$\SO(4)$ bundles as well.

\smallskip

In the case of bundles over $\CP^2$, one thus has the following.
Throughout the article, we use  the generator $x$ of $H^2(\CP^2,\Z)$
which is given by the Euler class of the Hopf bundle. The cohomology
class $x^2$ is our choice of an orientation class in
$H^4(\CP^2,\Z)$. The invariants $p_1$ and $e$ are then identified
with integers by evaluation on the fundamental class.  For principal
$\SO(3)$ bundles one then has
\begin{equation}\label{SO3} p_1(P)\equiv 1 \mod 4
\ \ \text{ if } \ \  w_2(P)\ne 0 \ \ \text{ and }\ \   p_1(P)\equiv 0\mod 4 \ \ \text{ if } \ \ w_2(P)= 0.
\end{equation}
 For an
$\SO(4)$ principal bundle $P$ with $w_2(P)=w_2(P_\pm)\ne 0$ one thus has
$p_1(P_-)=4a+1$ and $p_1(P_+)=4b+1$ for some $ a,b\in\Z$ and hence
\begin{equation}\label{SO4nonspin}
p_1(P)=2a+2b+1 \ \ , \ \  e(P)=a-b  \ \ \text{ if }\ \  w_2(P)\ne 0
\end{equation}
  If on the other hand $w_2(P)=0$, one has
$p_1(P_-)=4a$ and $p_1(P_+)=4b$ for some $ a,b\in\Z$ and hence
\begin{equation}\label{SO4spin}
p_1(P)=2a+2b \ \ , \ \  e(P)=a-b  \ \ \text{ if }\ \  w_2(P)= 0.
\end{equation}
 We describe the bundles by
specifying these two (arbitrary) integers $a,b$.

In the case of $\U(2)$ bundles over $\CP^2$, they are classified by
$c_1=r\, x$ and $c_2= s\, x^2$ and $r,s\in\Z$ can be chosen
arbitrarily. The structure group of an $\SO(4)$ bundle reduces to $\U(2)$
if and only if $p_1(P_+)$ is a square, and to $\T^2\subset\SO(4)$ if
and only if both $p_1(P_+)$ and $p_1(P_-)$ are squares.

\smallskip

We will also use the fact that for an $\SO(3)$ principal bundle
$P$ over $\CP^2$ one has
\begin{equation}\label{SO3p1}
|p_1(P)|= |H^4(P,\Z)| \ \text{ and } \pi_1(P)=0 \text{  if } w_2\ne 0
\text{  or }  \pi_1(P)=\Z_2 \text{  if } w_2=0
\end{equation}
 see
\cite[Proposition 3.6]{GZ2}.

\bigskip

\section{Kreck-Stolz invariants}
\label{KS}

 The
Kreck-Stolz invariants are based on the Eells-Kuiper $\mu$-invariant
and are defined as linear combinations of relative characteristic
numbers of appropriate bounding manifolds.  They were introduced and
calculated  for certain homogeneous spaces $L_{a,b}$ in \cite{KS1},
for the Aloff-Wallach spaces in \cite{KS2}, and for
 most of the Eschenburg spaces in \cite{Kr2}.  In the case of smooth
  simply connected closed seven dimensional manifolds $M$ of cohomology type $E_r$ or $\bar{E}_r$ ,
   the Kreck-Stolz invariants
 provide a classification up to homeomorphism and diffeomorphism.

Let $M$ be a  seven dimensional  oriented manifold of cohomology type $E_r$ or $\bar{E}_r$ and
$u\in H^2(M,\Z) \cong \Z$ a generator. If $W$ is an 8 dimensional smooth  manifold bounding $M$, with orientation inducing the orientation of $M$,  and if there exist
    elements $z,c\in H^2(W,\Z)$
such that
\begin{equation}
\begin{aligned}\label{KSzc}
    \partial W&=M  , \ z|M=u  , \ c|M=0  ,\\
     w_2(TW)&=c \!\!\mod 2 \ \text{ if $M^7$ of type $E_r$,} \\
      w_2(TW)&= c  + z \!\!\mod 2 \ \text{ if $M^7$ of type $\bar{E}_r$,}
\end{aligned}
\end{equation}
  one defines characteristic numbers $S_i(W,z,c) \in
\Q\,,\,i=1,2,3,$ as follows.

\begin{equation}\label{KSdef}
\begin{aligned}
S_1(W,z,c) &= <  e^{\frac{c+d}{2}} \cdot \hat A(W),[W,\partial W]> \\
S_2(W,z,c) &= <ch(\lambda(z) -1)  \cdot e^{\frac{c+d}{2}} \cdot \hat A(W),[W,\partial W]> \\
S_3(W,z,c) &= <ch(\lambda^2(z) -1)    \cdot e^{\frac{c+d}{2}} \cdot \hat A(W),[W,\partial W]>\\
 d= 0  &\text{ if $M^7$ is spin, and } d=z \text{ if $M^7$ is  not spin }
  \end{aligned}
  \end{equation}
  Here $\lambda(z)$ stands for the complex line bundle over $W$ with
first Chern class $z$, $ch$ is the Chern character, $ \hat A(W)$ the
$\hat A$ polynomial of $W$, and $[W,\partial W]$  a fundamental
class of $W$ which, restricted to the boundary, is the fundamental class of $M$.
 The integrality of these characteristic numbers for
closed manifolds, see \cite[Theorem 26.1.1]{Hi}, implies
  that $S_i(W,z,c)$ mod $\Z$ depends only on $\partial W=M$,
   and in particular not on the choice of sign for $u,z$ and $c$.
   Notice though that all $S_i$ change sign, if one changes the
   orientation of $M$.  Hence for manifolds of cohomology type $E_r$
 or  $\bar E_r$  one defines:
  $$s_i(M^7) =S_i(W^8,z,c) \mod 1.$$
  The Kreck-Stolz invariants can  be
interpreted as lying in $ \Q/\Z$. In \cite{KS1} it was shown that for
any manifold of cohomology type $E_r$ or $\bar{E}_r$, one can find a
bounding manifold $W$ such that
\eqref{KSzc} is satisfied with $c=0$.
In our examples we will not be able to always find an explicit
bounding manifold $W$ which is spin if $M$ is spin, but we will be able
to find a $W$ and $z,c$  which satisfy \eqref{KSzc}.
 M. Kreck and S. Stolz showed that the invariants $s_i(M^7) $  are diffeomorphism invariants, and
$$\bar s_1(M) = 28
 \, s_1(M) \ \ \text{ and }\ \ \bar s_i(M) =  s_i(M)\,,\,i=2,3$$
 are homeomorphism invariants.

\smallskip

  We now express these invariants explicitly in terms of the
  Pontryagin class $p_1=p_1(TW)  , \  \sign(W)$, and $z,c$. Recall that
 $\hat A(W) = 1 - \frac{1}{2^3 \cdot 3}
\,p_1 + \frac{1}{2^7\cdot 45}\,(-4\,p_2 + 7\,p_1^2)$ and $\sign(W) =
\frac{1}{45}\,(7\,p_2 - p_1^2)$ and hence  $\hat A(W) = 1 -
\frac{1}{2^3 \cdot 3}
 \,p_1 - \frac{1}{2^5\cdot 7}\,\sign(W) + \frac{1}{2^7 \cdot
 7}\,p_1^2\,.$  Furthermore,
 $ch(\lambda(z)) = e^z$ and hence $ch(\lambda(z)-1) = e^z -1$.
Thus we obtain:

\begin{equation}
    M^7 \text{ spin }
\end{equation}

\begin{equation*}\label{KSspin}
\begin{aligned}   S_1(W,c,z) &=  -\frac{1}{2^5 \cdot 7}\,\sign(W) +
                                                                        \frac{1}{2^7\cdot 7}\,p_1^2 - \frac{1}{2^6 \cdot 3}\,c^2\,p_1 + \frac{1}{2^7 \cdot 3} \,c^4\\
S_2(W,c,z) &=    - \frac{1}{2^4 \cdot 3}\,z^2\,p_1    + \frac{1}{2^3 \cdot 3}z^4        -\frac{1}{2^4 \cdot 3}\,z\,c\,p_1 +
                                   \frac{1}{2^4 \cdot 3}\,z\,c^3 + \frac{1}{2^4}\,z^2\,c^2 + \frac{1}{2^2 \cdot 3}\,z^3\,c \\
 S_3(W,c,z) &= - \frac{1}{2^2 \cdot 3}\,z^2\,p_1   + \frac{2}{3}\,z^4 -\frac{1}{2^3\cdot 3}\,z\,c\,p_1 + \frac{1}{2^3\cdot 3}\,z\,c^3
                     + \frac{1}{2^2}\,z^2\,c^2  + \frac{2}{3}\,z^3\,c  .
 \end{aligned}
 \end{equation*}

\bigskip
\begin{equation}
    M^7 \text{ non spin }
\end{equation}

\begin{equation*}\label{KSnonspin}
\begin{aligned}
 S_1(W,c,z) &=  -\frac{1}{2^5 \cdot 7}\,\sign(W) +
\frac{1}{2^7\cdot 7}\,p_1^2 - \frac{1}{2^6 \cdot 3}\,z^2\,p_1 +    \frac{1}{2^7 \cdot 3}\,z^4 - \frac{1}{2^5 \cdot 3} \,z\,c\,p_1   \\
                                            &  - \frac{1}{2^6 \cdot 3}\,c^2\,p_1 + \frac{1}{2^7 \cdot 3}\,c^4   + \frac{1}{2^5 \cdot 3}\,z\,c^3+\frac{1}{2^5 \cdot 3}\,z^3\,c+\frac{1}{2^6}\,z^2\,c^2 \\
S_2(W,c,z) &=    - \frac{1}{2^3 \cdot 3}\,z^2\,p_1    + \frac{5}{2^3\cdot 3}z^4        -\frac{1}{2^4 \cdot 3}\,z\,c\,p_1 + \frac{1}{2^4 \cdot 3}\,z\,c^3 +
                                \frac{1}{2^3}\,z^2\,c^2 + \frac{13}{2^4 \cdot 3}\,z^3\,c \\
S_3(W,c,z) &= - \frac{1}{2^3}\,z^2\,p_1   + \frac{13}{2^3}\,z^4 -\frac{1}{2^3\cdot 3}\,z\,c\,p_1 + \frac{1}{2^3\cdot 3}\,z\,c^3
                     + \frac{3}{2^3}\,z^2\,c^2  + \frac{31}{3\cdot 2^3}\,z^3\,c  .
 \end{aligned}
  \end{equation*}

  \bigskip

These formulas need to be interpreted as follows.  Since $\partial
W=M$, we have $H^3(\partial W,\Q)=H^4(\partial W,\Q)=0$  and hence
the inclusion $j\colon (W,\emptyset )\to (W,\partial W)$ induces an
isomorphism $j^*\colon H^4(W,\partial W,\Q)
 \to   H^4(W,\Q)$. Thus the characteristic classes $p_1,z^2,c^2,zc$
 in $H^4(W,\Q)$ can be pulled back to relative classes in $H^4(W,\partial
 W,\Q)$ and
 the classes
$p_1^2, z^2\,p_1,z^4,$  etc. in the above formulas are abbreviations for the characteristic numbers
$$ p_1^2=\langle (j^*)^{-1}(p_1) \cup
p_1,[W,\partial W]\rangle \ , \  z^3c=\langle (j^*)^{-1}(z^2) \cup
zc,[W,\partial W]\rangle ,\  \text{etc.}.$$

\smallskip

  The main classification theorem in \cite{KS2},Theorem 3.1, can now be stated
as follows:

\begin{thm}[Kreck-Stolz]\label{KSdiffeo}
Two simply connected smooth manifolds $M_1 , M_2$ which are both of type $E_r$, or  both of type $\bar{E}_r$,
are orientation preserving diffeomorphic (homeomorphic) if and only
if  $s_i(M_1)=s_i(M_2)$ (resp. $\bar s_i(M_1)=\bar s_i(M_2)$) for
$\,i=1,2,3$.
\end{thm}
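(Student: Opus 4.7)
The plan is to follow Kreck's modified surgery theory, which reduces the diffeomorphism classification of closed simply connected 7-manifolds with a fixed normal 2-type to a bordism calculation together with numerical invariants. First I would verify that each $s_i(M)$ is independent of the bounding data $(W,z,c)$. Given two such quadruples for $M$, glue them along $M$ with opposite orientations to form a closed 8-manifold $\bar W$ on which $z$ and $c$ extend globally. Each $S_i(W,z,c)$ then becomes a characteristic number of $\bar W$, namely the index of the Dirac operator twisted by $\lambda(z)^{\otimes k}-1$ for $k=0,1,2$, with respect to the spin or spin${}^c$ structure determined by $c$ (in the $E_r$ case) or by $c+z$ (in the $\bar E_r$ case). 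Atiyah-Singer integrality then gives $S_i(\bar W,z,c)\in\Z$, so $s_i(M)\in \Q/\Z$ is well-defined and orientation-sensitive.

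Next I would identify the normal 2-type of $M$. Since $M$ is simply connected with $H^2(M,\Z)\cong\Z$ generated by $u$, a normal 2-smoothing is encoded by the classifying map $M\to\CP^\infty$ of $u$ together with a spin or spin${}^c$ structure compatible with the prescribed value of $w_2(TM)$. The relevant bordism group is $\Omega_7^{\Spin}(\CP^\infty)$ in the $E_r$ case, and its spin${}^c$ analog in the $\bar E_r$ case. The Atiyah-Hirzebruch spectral sequence, combined with the classical vanishing $\Omega_3^{\Spin}=\Omega_5^{\Spin}=\Omega_7^{\Spin}=0$, presents these as finite groups assembled from cobordism invariants detectable by Chern and Pontryagin numbers of bounding 8-manifolds. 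Kreck's main theorem then asserts that two such $M_i$ are orientation-preserving diffeomorphic iff their bordism classes coincide modulo the action of self-bordisms of the reference space; one checks that this quotient is faithfully detected by the triple $(s_1,s_2,s_3)$ via the formulas \eqref{KSdef}.

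For the homeomorphism version, I would replace smooth normal bordism by topological normal bordism; the only difference is that the group $\Theta_7=\Z_{28}$ of homotopy 7-spheres is quotiented out, which precisely kills the $\Z_{28}$ ambiguity in $s_1$. Hence $\bar s_1=28\,s_1$ is the correct topological invariant while $\bar s_2,\bar s_3$ are unchanged. The main obstacle is this last detection step in the smooth case: one must construct enough bounding 8-manifolds to realize every residue class in $(\Q/\Z)^3$ and identify the smooth refinement $s_1$ with a normalization of the Eells-Kuiper $\mu$-invariant. The explicit computations for $L_{a,b}$, Aloff-Wallach and Eschenburg spaces in \cite{KS1,KS2,Kr2} provide the generators needed to close this surjectivity gap, and the universal formulas for $S_1,S_2,S_3$ transport the detection result to arbitrary manifolds of type $E_r$ or $\bar E_r$.
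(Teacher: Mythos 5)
You should first be aware that the paper does not prove this statement at all: it is quoted verbatim from Kreck--Stolz (\cite{KS2}, Theorem 3.1; see also \cite{KS1}), and the only ingredient the paper itself supplies is the well-definedness of $s_i(M)\in\Q/\Z$, which it gets from Hirzebruch's integrality theorem exactly as you do in your first paragraph. So the comparison is with the original Kreck--Stolz argument. Your overall frame is the right one -- well-definedness by closing up two bounding quadruples and applying integrality, identification of the normal $1$-type as $\CP^\infty$ together with a spin (type $E_r$) or spin$^c$-like (type $\bar E_r$) structure, existence of $(W,z,c)$ from the vanishing of the relevant $7$-dimensional bordism group, and a modified-surgery comparison of $M_1$ and $M_2$ through a bordism over this normal type.

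The genuine gap is that the heart of the theorem -- the implication ``equal invariants $\Rightarrow$ diffeomorphic'' -- is exactly the step you dispose of with ``one checks that this quotient is faithfully detected by the triple $(s_1,s_2,s_3)$.'' In Kreck--Stolz this is the main work: one takes a bordism $W$ over the normal type from $M_1$ to $-M_2$, performs surgery below the middle dimension, and then analyzes the middle-dimensional quadratic form on the $8$-dimensional bordism together with the classes $z,c$; the vanishing of the differences of the characteristic numbers $S_1,S_2,S_3$ is precisely what allows one to stabilize (by adding closed $8$-manifolds) and surger $W$ into an $h$-cobordism. None of that analysis appears in your sketch, and your proposed substitute is not correct: the explicit computations for $L_{a,b}$, Aloff--Wallach and Eschenburg spaces in \cite{KS1,KS2,Kr2} are \emph{applications} of the classification theorem, not inputs to its proof, and ``realizing every residue class in $(\Q/\Z)^3$'' is not the missing ingredient -- the invariants need not be surjective onto $(\Q/\Z)^3$ for the theorem to hold, and surjectivity would not by itself give the surgery step. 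A smaller but real gap of the same kind occurs in the homeomorphism statement: it is not enough to say one ``quotients out $\Theta_7$''; one must show that homeomorphic manifolds of this type differ smoothly by connected sum with an element of $\Theta_7=bP_8\cong\Z_{28}$ (comparison of smooth and topological structure sets, vanishing Kirby--Siebenmann class) and that such a connected sum changes $s_1$ by a multiple of $1/28$ while fixing $s_2,s_3$; you assert the conclusion without these steps.
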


\noindent
For orientation reversing diffeomorphisms one changes the signs of  the invariants.
\smallskip

Recall that $E_r$ and $\bar E_r$ fall into two homotopy types
depending on whether $\pi_4(M)=0$ or $\Z_2$.  For some of these
manifolds B. Kruggel obtained a homotopy classification, see
 \cite[Theorem 0.1]{Kr2}, \cite[Theorem 3.4]{Kr1}:
 \newpage
\begin{thm}[Kruggel]\label{Ktop} For simply connected smooth
manifolds  $M_1 , M_2$ one has:
\begin{itemize}
\item[a)]  If $M_i$ are both of type $E_r$ and $\pi_4(M_i)  = 0$,  then $r$ is odd, and $M_1$ and $M_2$
are orientation preserving homotopy equivalent if and only if $lk(M_1) \equiv lk(M_2) \in \Z_r$  and $2\,r\,s_2(M_1) \equiv  2\,r\,s_2(M_2) \in \Q/\Z$.
\item[b)]   If $M_i$ are both of type $E_r$ and $\pi_4(M_i)  \cong \Z_2$ and if $r$ is odd, $M_1$ and $M_2$
are orientation preserving homotopy equivalent if and only if  $lk(M_1) \equiv lk(M_2) \in \Z_r$ and $r\,s_2(M_1) \equiv  r\,s_2(M_2)  \in \Q/\Z$.
\item[(c)] If $M_i$ are both of type $\bar E_r$ with $r$ divisible by $24$,  $M_1$ and $M_2$ are orientation
preserving homotopy equivalent if and only if  $lk(M_1) \equiv lk(M_2) \in \Z_r$ and $p_1(M_1) \equiv p_1(M_2) \mod 24$.
\end{itemize}
\end{thm}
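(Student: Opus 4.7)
The plan is to approach this as a homotopy classification via cellular obstruction theory applied to a minimal CW model for $M$. Since $M$ is simply connected with the very restricted cohomology \eqref{coh}, a minimal model has the form $X = S^2 \cup_{r\eta} e^4 \cup e^5 \cup e^7$, where $r\eta \in \pi_3(S^2)$ is chosen so that $H^4(X^{(4)};\Z) = \Z_r$, and the 4-skeleton is then determined up to homotopy equivalence by the integer $r$ alone.

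Next I would compute $\pi_4(X^{(4)})$ from the cofibre sequence $S^3 \xrightarrow{r\eta} S^2 \to X^{(4)}$ via the associated Puppe long exact sequence, and classify the possible attaching maps of the 5-cell. The parity of this class controls whether $\pi_4(X^{(5)})$ is $0$ or $\Z_2$, which is precisely the dichotomy separating parts (a) and (b). After fixing $X^{(5)}$, the 7-cell attaches along an element of $\pi_6(X^{(5)})$, and two such manifolds are orientation-preserving homotopy equivalent if and only if the two 7-attaching classes lie in the same orbit under the group of self-equivalences of $X^{(5)}$ acting trivially on $H^2$ and preserving orientation.

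The heart of the proof is then to identify this orbit invariant with the classical data $lk(M)$ together with an appropriate multiple of the Kreck--Stolz invariant $s_2(M)$ (respectively with $p_1(TM) \bmod 24$ in the non-spin setting of (c)). The linking form is straightforward: it is determined by the cohomology ring and the Bockstein, hence already read off from $X^{(4)}$ together with Poincar\'e duality. The subtle part is $s_2$: although $s_2$ is only defined modulo $1$ and depends on the smooth structure, the combinations $2r\cdot s_2$ in case (a) and $r\cdot s_2$ in case (b) should depend only on the 7-attaching class. One shows this by computing $S_2(W,z,c)$ from \eqref{KSdef} on a specific cellular 8-thickening built from $X$ and recognising the fractional part as the obstruction invariant; the prefactor $r$ or $2r$ is precisely what kills the indeterminacy coming from the choices of $W$ and of smooth structure in a fixed homotopy class.

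The main obstacle I expect is this last identification: showing that, after multiplication by $r$ (or $2r$), the analytic $\hat A$-genus expression for $s_2$ captures exactly the homotopy-theoretic $k$-invariant of the 7-cell attaching map and nothing more. For part (c) the analogous step is replaced by the comparatively clean fact that $p_1(TM) \bmod 24$ is a cobordism invariant in the non-spin category, with the hypothesis $24 \mid r$ ensuring that $p_1 \bmod 24$ is intrinsically well defined as an element of $\Z_r$, so the matching with the 7-cell obstruction becomes a direct $\hat A$-computation modulo $24$ rather than a delicate cancellation of denominators.
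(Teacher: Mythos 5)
First, a point of reference: the paper does not prove this theorem at all --- it is quoted from Kruggel (\cite[Theorem 3.4]{Kr1}, \cite[Theorem 0.1]{Kr2}), so your outline has to be judged against those proofs, and as it stands it is an outline with the decisive steps missing rather than a proof. The most concrete error is the CW model. For a manifold $M$ of type $E_r$ one has $H_3(M;\Z)\cong H^4(M;\Z)\cong\Z_r$ by Poincar\'e duality, so a minimal cell structure must contain a $3$-cell; a complex with cells only in dimensions $0,2,4,5,7$ has $H_3=0$. Moreover $S^2\cup_{r\eta}e^4$ has $H^4\cong\Z$ (with $u^2=r\cdot$generator), not $\Z_r$, so your claim about $H^4(X^{(4)})$ is false, and the correct model $(S^2\vee S^3)\cup e^4\cup e^5\cup e^7$ with $\partial e^4=r\,e^3$ carries an extra attaching datum (the $\eta$-component of the $4$-cell, which is where the linking form lives); in particular the $4$-skeleton is \emph{not} determined by $r$ alone, contradicting your own later remark that $lk$ is read off from it. Your parity criterion for the dichotomy $\pi_4(M)=0$ versus $\Z_2$ is also transplanted from the wrong place: in the paper (and in Kruggel) this dichotomy is detected on the circle bundle $G\to M$ with $H^*(G)\cong H^*(\Sph^3\times\Sph^5)$, whose $5$-cell attaches along an element of $\pi_4(\Sph^3)\cong\Z_2$; for $M$ itself, $\pi_4$ of the (corrected) $4$-skeleton is not just $\Z_2$ and your claim would need a genuine computation.

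More importantly, the heart of the theorem --- that $2r\,s_2$ (resp.\ $r\,s_2$, resp.\ $p_1\bmod 24$) is a homotopy invariant which, together with $lk$, detects the orbit of the $7$-cell attaching class under self-equivalences of the $6$-skeleton --- is exactly the step you defer (``the main obstacle I expect''). That identification is where essentially all of Kruggel's work lies: in \cite{Kr1} it requires an explicit determination of $\pi_6$ of the lower skeleton and of the action of self-equivalences (EHP/Toda-bracket calculations), and in \cite{Kr2} it is done via normal invariants and the Kreck--Stolz surgery-theoretic setup; in neither treatment does it reduce to ``computing $S_2(W,z,c)$ on a cellular thickening and noting that the prefactor kills the indeterminacy,'' which as stated is not an argument (you would still have to prove independence of the smooth structure and of $W$, i.e.\ precisely the homotopy invariance in question). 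You also never address the claim in part (a) that $\pi_4(M)=0$ forces $r$ odd, nor the ``only if'' directions, and in part (c) the assertion that $p_1\bmod 24$ is ``a cobordism invariant in the non-spin category'' is not a proof and is dubious as stated; the actual role of $24\mid r$ is to make the reduction $\Z_r\to\Z_{24}$ well defined, after which one still has to match $p_1\bmod 24$ with the homotopy-theoretic obstruction. So the proposal identifies the right general strategy (the one closer to \cite{Kr1}) but has a wrong starting model and leaves the theorem's essential content unproved.
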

\no In the remaining cases, the homotopy classification has not yet
been finished.

P. Montagantirud showed in \cite{Mo} that in the case of manifolds of
type $\bar E_r$, or of type $E_r$ with $r$ odd, one can replace
$s_3$ with the linking form $lk$ in the homeomorphism and
diffeomorphism classification. In other words, $28 \,s_1,s_2$ and $lk$
classify the manifold up to homeomorphism, and $s_1,s_2$ and $lk$ up
to diffeomorphism.  Furthermore, in the case of manifolds of type $E_r$ with $r$ odd,
he proves that one can replace the invariant $28\,s_1$ with $p_1$.

\bigskip

\section{Topology of sphere bundles over $\CP^2$}
\label{3sphere}

\bigskip

 We start with the family of $\Sph^3$-bundles over
$\CP^2$. As we will see, the total space of such a bundle is spin if
and only if the bundle itself is not spin.  Our goal is to see when
the total space of such a bundle is diffeomorphic to an
Eschenburg space,  and we thus   restrict ourselves in this section to sphere
bundles which are not spin. According to \eqref{SO4nonspin}, they are classified by two integers $a,b$
and we let
 $$
\Sph^3\to S_{a,\, b }\overset{\pi}{\longrightarrow} \CP^2
 \quad \text{ with } \quad p_1(S_{a,\, b })=(2a+2b+1)\,x^2 ,\ e(S_{a,\, b })=(a-b)\,x^2 \text{ and } w_2\ne 0
$$
where $x$ is a generator of $H^2(\CP^2;\Z)$. A change of orientation
corresponds to changing the sign of $e$ but not of $p_1$. Thus
$S_{a,\, b }$ and $S_{b,\, a }$ are orientation reversing
diffeomorphic.

\smallskip

It turns out  that it is also the non-spin bundles which are known
to admit non-negative curvature, as was shown in \cite{GZ2}:

\begin{thm}[Grove-Ziller]
Every sphere bundle over $\CP^2$ with $w_2\ne 0$  admits a metric
with non-negative sectional curvature.
\end{thm}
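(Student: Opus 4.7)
The plan is to realize $S_{a,b}$ as a quotient of a principal bundle over $\CP^2$ carrying the non-negatively curved connection metric produced in \cite{GZ2}, and then to transport the metric downstairs via O'Neill's formula. By the classification of $\SO(4)$ bundles summarized in \eqref{SO4nonspin}, $S_{a,b}$ is the $\Sph^3$-bundle associated to a principal $\SO(4)$ bundle $\SO(4)\to P\to\CP^2$ with $w_2(P)\ne 0$. Identifying $\SO(4)=\Sph^3\times\Sph^3/\{\pm(1,1)\}$ and writing $\Sph^3_\Delta$ for the image of the diagonal subgroup, which is precisely the isotropy of $1\in\Sph^3$ under the standard $\SO(4)$-action on $\R^4$, one obtains $S_{a,b}=P/\Sph^3_\Delta$.

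The key input is the construction of \cite{GZ2}, which endows the total space of every such principal bundle $P$ with an $\SO(4)$-invariant connection metric of non-negative sectional curvature. Geometrically, the method begins with the cohomogeneity one action of $\SU(2)$ on $\CP^2$ (with singular orbits a $\CP^1$ and a point) and lifts it to a cohomogeneity one structure on $P$, to which the Grove--Ziller gluing technique of \cite{GZ1} applies. In the special case when $p_1(P_+)$ is a perfect square, equivalently when the structure group reduces to $\U(2)$, the construction admits the cleaner description alluded to in the introduction: one starts with a $\U(2)$ principal bundle $P^*$ whose Chern classes $c_1, c_2$ match the data of $S_{a,b}$ via \eqref{chern}, equips $P^*$ with an invariant metric, and then recovers $S_{a,b}=P^*/\U(1)$ through the standard $\U(2)$-action on $\Sph^3\subset\C^2$.

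With the invariant metric on $P$ in hand, the right action of $\Sph^3_\Delta$ on $P$ is free and by isometries, so the projection $P\to S_{a,b}$ is a Riemannian submersion and non-negative sectional curvature descends by O'Neill's formula. The main obstacle is the cohomogeneity one extension in \cite{GZ2} itself, which requires carefully matched singular-orbit data on $\CP^2$ and on the principal bundle and is the real technical heart of the argument; I would treat this step as a black box here, leaving only the routine verification from \eqref{SO4nonspin} that every admissible pair $(a,b)$ is realized by some such $P$.
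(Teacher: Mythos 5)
The paper itself contains no proof of this statement: it is quoted directly from \cite{GZ2}, so your strategy --- realize $S_{a,b}$ as $P/\SO(3)$ for the principal $\SO(4)$ bundle $P$ given by the classification \eqref{SO4nonspin}, take the invariant non-negatively curved metric provided by \cite{GZ2}, and push it down by O'Neill --- is in substance the same appeal to \cite{GZ2} that the paper makes, and as a reduction it is sound (your $\Sph^3_\Delta$ is of course $\SO(3)=\Sph^3/\{\pm 1\}$, and $P/\SO(3)=P\times_{\SO(4)}\Sph^3$ is indeed the associated sphere bundle, on which the free isometric $\SO(3)$ action gives a Riemannian submersion).

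One correction to your sketch of the black box, since that is where all the content lies: the cohomogeneity one action on $\CP^2$ that \cite{GZ2} lifts is not the $\SU(2)$ action with singular orbits a $\CP^1$ and a point. That action has a singular orbit of codimension four (the fixed point), and the gluing theorem of \cite{GZ1} requires \emph{both} singular orbits to have codimension two. The relevant action is the $\SO(3)$ action by real matrices in $\SU(3)$, whose singular orbits are the Veronese $\RP^2$ and the quadric $\{z_1^2+z_2^2+z_3^2=0\}\cong \Sph^2$, both of codimension two; the lifting of this action to $P$ is also exactly where the hypothesis $w_2\ne 0$ does its work, which is why the spin case is only partially understood (compare \pref{topspin} and the remark following it). Note also that the ``cleaner'' description via a $\U(2)$ reduction, which the introduction of this paper uses for $M^t_{a,b}$ and $\bar M^{2t}_{a,b}$, is only available when $p_1(P_+)$ is a perfect square, so it cannot substitute for the general $\SO(4)$ (equivalently, sphere bundle) statement of \cite{GZ2} that you quote for arbitrary $S_{a,b}$.
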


\smallskip

  As for their topology we have:

\begin{prop}\label{3top}
The manifolds $S_{a,\, b }$ have cohomology type $E_r$ with
$r=|a-b|$, as long as $a\ne b$. Their
first Pontryagin class is given by $p_1(TS_{a,\, b })\equiv (2\,a+2\,b+4) \text{ mod }
r$.
\end{prop}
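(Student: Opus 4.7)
The plan is to (i) run the Gysin sequence of the sphere bundle $\pi\colon S_{a,b}\to\CP^2$ to pin down $H^*(S_{a,b};\Z)$ together with a preferred generator of $H^4$; (ii) check simple connectivity and verify that $TS_{a,b}$ is spin, so the cohomology type is $E_r$ rather than $\bar E_r$; and (iii) compute $p_1(TS_{a,b})$ by stabilizing the tangent bundle so the calculation reduces to $\CP^2$.

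For step (i), the Gysin sequence
$$\cdots\to H^{i-4}(\CP^2;\Z)\xrightarrow{\cup e}H^i(\CP^2;\Z)\xrightarrow{\pi^*}H^i(S_{a,b};\Z)\to H^{i-3}(\CP^2;\Z)\to\cdots$$
with $e=(a-b)x^2$ and $H^*(\CP^2;\Z)=\Z[x]/(x^3)$ shows that, for $a\ne b$, the only nonzero cup-product map is multiplication by $a-b$ from $H^0\to H^4$. Reading off each degree produces $H^*(S_{a,b};\Z)$ exactly as in \eqref{coh} with $H^4\cong\Z_r$, $r=|a-b|$, and since $\pi^*\colon H^4(\CP^2;\Z)\to H^4(S_{a,b};\Z)$ is the quotient $\Z\to\Z_r$, the class $u^2:=\pi^*(x^2)$ generates $H^4(S_{a,b};\Z)$, where $u=\pi^*x$.

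For step (ii), $S_{a,b}$ is simply connected from the long exact homotopy sequence of the fibration (both $\Sph^3$ and $\CP^2$ are simply connected). Let $V\to\CP^2$ be the rank $4$ vector bundle whose unit sphere bundle is $S_{a,b}$, so $w_2(V)\equiv x\pmod 2$. The vertical tangent bundle satisfies $T^v\oplus\epsilon\cong\pi^*V$, giving
$$TS_{a,b}\oplus\epsilon\;\cong\;\pi^*T\CP^2\oplus T^v\oplus\epsilon\;\cong\;\pi^*\bigl(T\CP^2\oplus V\bigr).$$
Since $c(T\CP^2)=(1+x)^3$ yields $w_2(T\CP^2)\equiv 3x\equiv x\pmod 2$, the Whitney sum formula gives
$$w_2(TS_{a,b})=\pi^*\bigl(w_2(T\CP^2)+w_2(V)\bigr)=\pi^*(x+x)=0,$$
so $S_{a,b}$ is spin and has cohomology type $E_r$.

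For step (iii), the same stable isomorphism gives $p_1(TS_{a,b})=\pi^*p_1(T\CP^2\oplus V)$. Because $H^2(\CP^2;\Z)=\Z$ is torsion-free, the potential $2$-torsion correction to additivity of $p_1$ vanishes in $H^4(\CP^2;\Z)$ and
$$p_1(T\CP^2\oplus V)=p_1(T\CP^2)+p_1(V)=3x^2+(2a+2b+1)x^2=(2a+2b+4)x^2,$$
using $p_1(T\CP^2)=c_1^2-2c_2=9x^2-6x^2=3x^2$. Pulling back and reading the result via the generator $u^2$ of $H^4(S_{a,b};\Z)\cong\Z_r$ yields $p_1(TS_{a,b})\equiv 2a+2b+4\pmod r$. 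The main bookkeeping point is to stabilize by an $\epsilon$ before applying multiplicativity, so that every relevant bundle becomes the pullback of a bundle on the torsion-free base $\CP^2$ and no $2$-torsion cross term can appear.
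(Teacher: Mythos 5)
Your proposal is correct and follows essentially the same route as the paper: the Gysin sequence gives the cohomology ring with $u=\pi^*(x)$ and $u^2$ generating $H^4\cong\Z_r$, and the (stable) splitting $TS_{a,b}\oplus\epsilon\cong\pi^*(T\CP^2\oplus E)$ yields $w_2(TS_{a,b})=0$ and $p_1(TS_{a,b})=(2a+2b+4)u^2$ by the Whitney sum formula. Your extra remarks (stabilizing before applying multiplicativity and noting the absence of $2$-torsion in $H^4(\CP^2;\Z)$) just make explicit points the paper leaves implicit.
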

\begin{proof}
The cohomology ring structure and  $r=|a-b|$ immediately follows
from the Gysin sequence. In particular, $H^2(S_{a,\, b }\;
,\Z)\cong\Z$ with $u=\pi^*(x)$  a generator.

For the manifolds $S=S_{a,\, b }$ we have $TS \cong \pi^*(T\CP^2)
\oplus V$ where $V \oplus \Id = \pi^*E$ with $\Id$ a trivial bundle
and $E$ the vector bundle associated to the sphere bundle $S$. Hence
$p_1(V) = \pi^*(p_1(E))$ and, since $p_1(T\CP^2)=3\,x^2$ and $p_1(S)
= (2a+2b+1)\,x^2$, we have $p_1(TS)=(2\,a+2\,b+4)\,u^2 $.  Also, $S$
is spin since $w_2(TS) = \pi^*(w_2(T\CP^2) + w_2(E)) =0$.

\end{proof}

\begin{rem*}
In the case of $a=b$ the bundle has the same cohomology ring as $\Sph^3\times \CP^2$. Such manifolds
are not classified by the Kreck-Stolz invariants.  However, they are also not homotopy equivalent to any
known example of positive curvature.
\end{rem*}

 We now compute the Kreck-Stolz
invariants and for this purpose need to fix the orientation. In the fibration of
$S_{a,b}$, the base $\CP^2$ is oriented
 via $x^2\in H^4(\CP^2)$ and the orientation of the fiber is determined by the sign
 of the Euler class. This determines the orientation on the total space.

\begin{prop}\label{KSsphere}
The Kreck-Stolz invariants for $S_{a,\, b }$ with $a\ne b$ are
given by:
$$
\begin{aligned}
s_1(S_{a,\, b }) &  \equiv \frac{1}{2^5 \cdot 7 \cdot (a-b)}\,(a+b+2)^2
-\frac{\signs(a-b)}{2^5\cdot 7}
\mod 1 \\
s_2(S_{a,\, b }) &  \equiv  \frac{-1}{2^3 \cdot 3 \cdot (a-b)}\,(a+b+1) \mod 1\\
s_3(S_{a,\, b}) &  \equiv  \frac{-1}{2 \cdot 3 \cdot (a-b)}\,(a+b-2) \mod 1
 \end{aligned}
 $$
\end{prop}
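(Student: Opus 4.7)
The plan is to use the disk bundle $W=D(E)$ of the rank-$4$ oriented vector bundle $E\to\CP^2$ whose sphere bundle is $S_{a,b}$, as the bounding manifold in the Kreck-Stolz formulas. Since $TW\cong\pi^*(T\CP^2)\oplus\pi^*(E)$, and $w_2(T\CP^2)=x\bmod 2$, $w_2(E)=x\bmod 2$ (because $w_2(E)\ne 0$ and $H^2(\CP^2;\Z_2)=\Z_2$), we see $w_2(TW)=0$, so $W$ is spin. The natural choices are then $z=\pi^*(x)\in H^2(W,\Z)$ (which restricts to the generator $u=\pi^*(x)\in H^2(S_{a,b},\Z)$) and $c=0$, satisfying \eqref{KSzc} for the spin case.

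The key ingredients are then (i) $p_1(TW)=\pi^*\!\Lbr p_1(T\CP^2)+p_1(E)\Rbr=(2a+2b+4)\,\pi^*(x^2)$, as already used in \pref{3top}; (ii) the signature of $W$; and (iii) the characteristic numbers $p_1^2$, $z^2p_1$, $z^4$, computed via the Thom isomorphism. Writing $\tau\in H^4(W,\partial W;\Z)$ for the Thom class, one has $j^*\tau=\pi^*(e(E))=(a-b)\pi^*(x^2)$, so in rational cohomology $j^{*-1}(\pi^*(x^2))=\tfrac{1}{a-b}\,\tau$. Combining with $\pi_*(\tau)=1\in H^0(\CP^2)$ and $\pi_*(\tau\cup\pi^*(\alpha))=\alpha$ gives
\[
\langle\tau\cup\pi^*(x^2),[W,\partial W]\rangle=\langle x^2,[\CP^2]\rangle=1,
\]
so that $z^4=\tfrac{1}{a-b}$, $z^2p_1=\tfrac{2(a+b+2)}{a-b}$, $p_1^2=\tfrac{4(a+b+2)^2}{a-b}$. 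Since $H^4(W,\partial W;\Q)$ is one-dimensional with generator $\tau$ and intersection form $[a-b]$, we get $\sign(W)=\sgn(a-b)$.

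Substituting these into the three spin formulas for $S_i(W,z,c)$ with $c=0$ (where all terms involving $c$ drop out) and simplifying yields the three stated expressions; for example, the $s_2$ computation is
\[
s_2=-\frac{1}{2^4\cdot 3}\cdot\frac{2(a+b+2)}{a-b}+\frac{1}{2^3\cdot 3}\cdot\frac{1}{a-b}=-\frac{a+b+1}{2^3\cdot 3\,(a-b)}.
\]
The analogous algebraic simplifications give the formulas for $s_1$ and $s_3$. Reduction modulo $1$ at the end accounts for the independence from the various sign choices allowed by the integrality of the characteristic numbers on closed manifolds.

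The only mild obstacle is keeping track of orientations and the Thom-isomorphism identifications carefully: the orientation on $S_{a,b}$ is induced from that of $\CP^2$ together with the sign of the Euler class $a-b$, which is precisely what makes the signature of $W=D(E)$ equal to $\sgn(a-b)$ rather than $\pm 1$ with an ambiguous sign. Once this is pinned down, the computation is a direct substitution.
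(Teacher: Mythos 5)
Your proposal is correct and follows essentially the same route as the paper's proof: the disk bundle of $E$ as the spin bounding manifold with $z=\pi^*(x)$, $c=0$, the Thom class giving $j^*\tau=(a-b)\pi^*(x^2)$ and hence $z^4=\tfrac{1}{a-b}$, $z^2p_1=\tfrac{2(a+b+2)}{a-b}$, $p_1^2=\tfrac{4(a+b+2)^2}{a-b}$, $\sign(W)=\sgn(a-b)$, followed by substitution into the spin Kreck--Stolz formulas. The only cosmetic difference is that you phrase the orientation/evaluation step via the pushforward $\pi_*$ rather than the paper's cap-product condition $U\cap[W,\partial W]=[\CP^2]$, which is equivalent.
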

\begin{proof} We will use the notation
established in the proof of \eqref{3top} and set $u=\pi^*(x) $.  Let
$E$ be  the vector bundle associated to the sphere bundle $S_{a,\, b
}$ and $\bar\pi\colon W_{a,\, b } \to \CP^2$ its  disk bundle. Hence
$W=W_{a,\, b }$ is a natural choice for a bounding manifold and we
identify the cohomology of $W$ with that of $\CP^2$. Thus
$z:=\bar\pi^*(x)$ is a generator for $H^2(W;\Z)\cong \Z$. Since the
restriction of $\bar\pi$ to $\partial W=S_{a,\, b }$ is $\pi$, it
follows that $z|\partial W =\pi^*(x) =u$. For the tangent bundle of
$W$
 we have $TW =\bar\pi^*( T\CP^2) \oplus \bar\pi^* E$ and hence  $w_2(TW) = \bar\pi^*( w_2(T\CP^2) +
 w_2(E) )= 0$. Thus $S_{a,b}$ and $W$ are both spin,  and we can choose $c=0$ in \eqref{KSspin}. Furthermore,
  $p_1(TW) =\bar\pi^*( p_1(T\CP^2) + p_1(E)) = 3\,x^2 + (2\,a+2\,b+1)\,x^2
 =
(2\,a+2\,b+4)\,z^2 \in H^4(W;\Z) \cong \Z\,.$

The sphere bundle and hence the  disk bundle are assumed to be
oriented and we let $U\in H^4(W,\partial W)\cong\Z$ be the
corresponding Thom class. The orientation on $\CP^2$ is defined by
$\langle x^2 , [\CP^2]\rangle =1$ and we define the orientation on
$W$ such that $U\cap [W,\partial W]=[\CP^2]$. Thus $\langle U \cup
z^2,[W,\partial W]\rangle =\langle z^2,U\cap [W,\partial W]\rangle=
\langle x^2,[\CP^2]\rangle =1 $.   If $j\colon W\to (W,\partial W)$
is the inclusion, $j^*(U)$ is the Euler class and hence
$j^*(U)=(a-b)z^2$. Thus  we obtain
$$ \begin{aligned}
\langle(j^{-1})^*(z^2) \cup z^2,[W,\partial W]\rangle
                                          &=\frac{1}{a-b}\langle U
\cup z^2,[W,\partial W]\rangle = \frac{1}{a-b}\\
 \langle(j^{-1})^*(p_1) \cup p_1,[W,\partial W]\rangle
 &= \frac{1}{a-b}\langle\,(2\,a+2\,b+4)^2 \cdot U
\cup z^2,[W,\partial W]\rangle\\
&= \frac{1}{a-b}\,(2\,a+2\,b+4)^2\\
 \langle(j^{-1})^*(p_1) \cup z^2,[W,\partial W]\rangle
 &=\frac{1}{a-b} \langle\,(2\,a+2\,b+4) \cdot U
\cup z^2,[W,\partial W]\rangle\\
 &= \frac{1}{a-b}\,(2\,a+2\,b+4)\,.
\end{aligned}$$

Recall that the signature of a manifold with boundary is defined as
the signature of the quadratic form on $H^4(W,\partial W)$ given by
$v \mapsto \langle j^*(v) \cup v,[W, \partial W]\rangle$. Since
$H^4(W,\partial W)$ is generated by $U$ and $\langle (j^*)(U) \cup
U,[W,\partial W]\rangle=a-b$
 we have $\sign(W) =
\signs(a-b)$.
 Substituting into \eqref{KSspin} proves our claim.
\end{proof}

\begin{rem*}
Notice that $s_3\equiv 4 \,s_2+\frac{1}{2r} \mod 1$ and thus for
(orientation preserving diffeomorphisms) $s_2$ determines $s_3$.
\end{rem*}

If $r>1$ we also have the linking form:

\begin{cor}\label{linkingS3}
The linking form of $S_{a,\, b }$ with  $a\ne b$ is standard, i.e.,
$lk(S_{a,\; b}) \equiv \dfrac{1}{(a-b)} \in \Q/\Z$.
\end{cor}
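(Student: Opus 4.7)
The plan is to compute $lk(S_{a,b})$ using the bounding-manifold formulation of the linking form. If $W^8$ is a compact oriented manifold with $\partial W = M = S_{a,b}$ and $\tilde\alpha \in H^4(W;\Z)$ restricts to $\alpha = u^2 \in H^4(M;\Z) \cong \Z_r$, then because $r\alpha = 0$ the class $r\tilde\alpha$ lifts to a relative class $\bar\alpha \in H^4(W,\partial W;\Z)$, and the standard identity (deduced from the long exact sequence of $(W,\partial W)$ together with the Bockstein description given in the introduction) reads
$$lk(\alpha,\alpha) \equiv \frac{1}{r}\langle \bar\alpha \cup \tilde\alpha,\ [W,\partial W]\rangle \bmod 1.$$

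Now take $W = W_{a,b}$, the disk bundle of the vector bundle underlying $S_{a,b}$, equipped with the orientation used in the proof of Proposition \ref{KSsphere}. Let $z = \bar\pi^*(x) \in H^2(W;\Z)$, so that $z|_{\partial W} = u$ and $z^2$ is our chosen lift of $u^2$. The Thom class $U \in H^4(W,\partial W;\Z)$ satisfies $j^*(U) = (a-b)\,z^2$, and by the orientation convention of Proposition \ref{KSsphere}, $U \cap [W,\partial W] = [\CP^2]$. Writing $r = |a-b|$ and $\epsilon = \sgn(a-b)$, the class $\epsilon U$ is a relative lift of $r z^2$, hence
$$lk(S_{a,b}) \equiv \frac{\epsilon}{r}\langle U \cup z^2,\ [W,\partial W]\rangle = \frac{\epsilon}{r}\langle z^2,\ [\CP^2]\rangle = \frac{\epsilon}{r} = \frac{1}{a-b} \bmod 1.$$

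The only non-routine step is the bounding-manifold formula for $lk$; however, this is standard, since the relative lift $\bar\alpha$ together with the connecting map in the long exact sequence of the pair $(W,\partial W)$ realizes the inverse of the Bockstein used to define the linking form in the introduction. Everything else is a direct pullback computation using the conventions already fixed in the proof of Proposition \ref{KSsphere}.
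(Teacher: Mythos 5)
Your proposal is correct and takes essentially the same route as the paper: both reduce $lk(S_{a,b})$ to the characteristic number $z^4=\langle (j^*)^{-1}(z^2)\cup z^2,[W,\partial W]\rangle=\frac{1}{a-b}$ on the disk bundle, computed via the Thom class relation $j^*(U)=(a-b)\,z^2$ and $U\cap[W,\partial W]=[\CP^2]$; your relative lift $\sgn(a-b)\,U$ of $r\,z^2$ is precisely the rational inverse image $(j^*)^{-1}(z^2)$ (up to the factor $r$) used in the paper. The only difference is that you sketch the standard bounding-manifold formula for the linking form directly from the Bockstein and the long exact sequence of the pair, whereas the paper simply cites \cite{KS2} and \cite{Mo} for this identification.
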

\begin{proof}
As discussed in Section 1, the linking form is a bilinear form $L:
H^4(M;\Z) \times H^4(M;\Z) \longrightarrow \Q/\Z$ and is completely
determined by $L(u^2,u^2)$. It turns out by \cite{KS2}, see also
\cite{Mo}, that for manifolds of type $E_r$ we can also express the
linking form as the characteristic number $z^4$.  As seen above in
the case of $S_{a,\; b}$ we obtain $z^4 = \frac{1} {a-b}$ and hence
$L(u^2,u^2) \equiv  \frac{1}{a-b}  \in \Q/\Z$.
\end{proof}

\smallskip

As a consequence of \pref{KSsphere}, together with \tref{KSdiffeo},
one easily obtains a homeomorphism and diffeomorphism classification
of the manifolds  $S_{a,b}$:

\begin{cor}\label{hdclass} The manifolds $S_{a,b}$ and $S_{a',b'}$ with $r=a-b=a'-b'>0$ are
\begin{itemize}
\item[(a)]
 orientation preserving  homeomorphic  if and only if  $a\equiv a' \mod 12\,
 r$.
 \item[(b)] orientation preserving diffeomorphic  if and
only if $$a\equiv a' \mod 12\, r  \ \text{ and } \
 (a-a')\,[a+a'-r+2] \equiv  0 \mod 2^3\cdot 7\, \cdot r.$$
\item[(a')]  orientation
reversing  homeomorphic  if and only if  $r=1$ and $a\equiv -a' \mod 12\,.$
 \item[(b')]  orientation  reversing diffeomorphic  if and
only if  $r=1$ and $a(a+1)\equiv -a' (a'+1) \mod 2^3\cdot 7\,.$
\end{itemize}
\end{cor}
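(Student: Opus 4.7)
The plan is to apply the Kreck-Stolz classification (\tref{KSdiffeo}) directly to the explicit formulas for $s_i(S_{a,b})$ from \pref{KSsphere}, together with the homeomorphism normalization $\bar s_1 = 28\, s_1$. Since $r = a-b = a'-b' > 0$ is fixed and $\signs(a-b) = 1$, I substitute $a+b = 2a - r$ (and similarly for $a',b'$) throughout and reduce everything to congruences among $a$, $a'$, $r$. The remark after \pref{KSsphere}, namely $s_3 \equiv 4 s_2 + \frac{1}{2r} \pmod 1$, shows that for fixed $r$, $s_2$-agreement is equivalent to $s_3$-agreement, so $s_3$ may be dropped from the orientation-preserving comparison.

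For the orientation-preserving case, direct computation yields
\begin{equation*}
s_2(S_{a,b}) - s_2(S_{a',b'}) \equiv \frac{a' - a}{12\, r}, \qquad s_1(S_{a,b}) - s_1(S_{a',b'}) \equiv \frac{(a - a')(a + a' - r + 2)}{2^{3} \cdot 7 \cdot r} \pmod 1,
\end{equation*}
so that $\bar s_1(S_{a,b}) - \bar s_1(S_{a',b'}) \equiv (a-a')(a+a'-r+2)/(2r) \pmod 1$. If $a \equiv a' \pmod{12r}$ then $a - a' = 12 r k$ and the $\bar s_1$-difference equals $6 k (a + a' - r + 2)$, an integer; hence the $s_2$-condition alone suffices for orientation-preserving homeomorphism, giving (a). For (b) one additionally requires $s_1$ itself to agree, which by the second formula above is exactly $(a-a')(a+a'-r+2) \equiv 0 \pmod{2^{3} \cdot 7 \cdot r}$.

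For the orientation-reversing case I require $s_i(S_{a,b}) \equiv -s_i(S_{a',b'}) \pmod 1$, so sums replace differences. The analogous computation gives
\begin{equation*}
s_2 + s_2' \equiv \frac{r - 1 - (a + a')}{12\, r}, \qquad s_3 + s_3' \equiv \frac{r + 2 - (a + a')}{3\, r} \pmod 1.
\end{equation*}
Reducing the first modulo $3r$ and subtracting from the second forces $3 \equiv 0 \pmod{3r}$, whence $r = 1$. With $r = 1$ the $s_2$-condition becomes $a + a' \equiv 0 \pmod{12}$, which is (a'). For (b'), the $s_1$-sum at $r = 1$ simplifies via $(2n+1)^2 = 4 n (n+1) + 1$ to $(a(a+1) + a'(a'+1))/(2^{3} \cdot 7) \pmod 1$, and its integrality is precisely the stated congruence $a(a+1) \equiv -a'(a'+1) \pmod{2^{3} \cdot 7}$; one also checks that the corresponding $\bar s_1$-sum is automatically integral since $a(a+1)$ is always even.

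The single step of genuine content is the forcing of $r = 1$ in the orientation-reversing case from the incompatibility of the $s_2$ and $s_3$ sum-conditions; the remainder is routine modular arithmetic with the closed-form Kreck-Stolz expressions.
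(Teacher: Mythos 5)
Your proposal is correct and is essentially the paper's own (unwritten) argument: the authors simply cite \pref{KSsphere} and \tref{KSdiffeo}, and your difference/sum computations of $s_1,s_2,s_3$, the use of $\bar s_1=28\,s_1$, and the forcing of $r=1$ in the orientation-reversing case via the incompatibility of the $s_2$- and $s_3$-sum conditions all check out. One remark: your derivation shows that orientation-reversing diffeomorphism also requires $a\equiv -a'\bmod 12$ (the $s_2$-condition of (a')), so (b') must be read as including condition (a'); as your own formulas confirm, $r=1$ together with $a(a+1)\equiv -a'(a'+1)\bmod 56$ alone is not sufficient (e.g.\ $a=1$, $a'=10$ satisfies the congruence but has $s_2(S_{1,0})+s_2(S_{10,9})=-11/12\notin\Z$).
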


\begin{rem*} (a)
Notice that if $a\equiv a' \mod 168\, r$, then
$S_{a,r-a}$ and $S_{a',r-a'}$ are diffeomorphic. Thus each sphere
bundle is diffeomorphic to infinitely many other sphere bundles.

(b) As far as the homotopy type is concerned, we note that
$\pi_4(S_{a,b}) \cong \Z_2$ if $r=|a-b|$ is even, as follows from
\tref{Ktop} (a).  We suspect that if
 $r$ is odd,  $\pi_4(S_{a,b})=0$. For example, \cref{sameAW}
implies that $\pi_4(S_{\pm 1, p(p+1)})=0$.

In any case, if $\pi_4(S_{a,b})=0$, $S_{a,b}$ and $S_{a',b'}$ with
$a-b=a'-b'>0$ are orientation preserving   homotopy equivalent if
and only if $a\equiv a' \mod 6$, and
 orientation  reversing  homotopy equivalent if and only if $a+a'\equiv r-1 \mod 6$.
If $\pi_4(S_{a,b}) \cong \Z_2$, the same holds  mod 12.
\end{rem*}

\bigskip

\section{Topology of Circle Bundles}
\label{circle}

\bigskip

In this section we discuss the manifolds $M^t_{a,\, b }$ described
in the introduction. We start with  6-dimensional manifolds  which
are $\Sph^2$ bundles over $\CP^2$. As we will see, it is again important
to assume that they are not spin. According to \eqref{SO3}, the
corresponding $\SO(3)$ principal bundle  satisfies $p_1\equiv 1 \mod
4$ and we define:
$$
\Sph^2\to N_t\overset{\pi}{\longrightarrow} \CP^2
 \quad \text{ with } \quad p_1(N_t)=(1-4t)x^2 \text{ and } w_2\ne 0
$$
for some integer $t$.

\smallskip

In order to compute the cohomology ring of $N_t$ we regard the
2-sphere bundle as the projectivization of a rank 2 complex vector
bundle. For this purpose, let $\U(2)\to P \to \CP^2$ be a principal
$\U(2)$ bundle with associated vector bundle
$E=P\times_{\U(2)}\C^2$. Such bundles are classified by their Chern classes
$c_1$ and $ c_2$ and we define
$$\U(2)\to P_t \overset{\tau}{\longrightarrow} \CP^2
 \quad \text{ with } \quad c_1(P_t)=x \text{ and } c_2(P_t)=tx^2
$$
 Since $c_1 $ mod
$2 = w_2$, such a bundle is not spin. If $Z=\diag(z,z)\subset\U(2)$
denotes the center of $\U(2)$, we obtain an $\SO(3)$ principal bundle
$$\U(2)/Z\cong \SO(3)\to Q_t:=P_t/Z\to \CP^2.$$
According to \eqref{chernpm}, we have $P_t/Z=P_-$ and hence, by \eqref{chern},
$p_1(Q_t)=c_1^2-4c_2=(1-4t)x^2$.
Furthermore, $w_2(Q_t)=w_2(P_-)=w_2(P)\ne 0$. Thus
$$\SO(3)\to Q_t\to \CP^2 \quad \text{ with } \quad p_1(Q_t)=(1-4t)x^2 \text{ and } w_2(Q_t)\ne 0$$
 is the $\SO(3)$ principal bundle associated to the 2-sphere bundle $N_t$.
This implies that
$$P(E)\simeq P_t\times_{\U(2)}\CP^1\simeq (P_t/Z)\times_{\SO(3)}\Sph^2\simeq Q_t\times_{\SO(3)}\Sph^2\simeq N_t
$$
i.e., we can  regard $N_t$ as the projectivization of $E$. Furthermore,  $N_t=P_t/\T^2$ since
$$N_t\simeq P(E)\simeq P_t\times_{\U(2)}\CP^1\simeq
P_t\times_{\U(2)}\U(2)/\T^2\simeq P_t/\T^2.$$

\bigskip

We can now apply Leray-Hirsch to compute  the cohomology ring of $P(E)=N_t$:
\begin{equation}
\begin{aligned}
H^{*}(P(E)) &\cong H^{*}(\CP^2)[y]/[y^2 +c_1(E)y + c_2(E)]\\
                     &\cong H^{*}(\CP^2)[y]/[y^2 +x\,y +t\,x^2]\\
                     &\cong \Z[x,y]/[x^3=0,y^2+x\,y+t\,x^2=0]
\end{aligned}
\end{equation}
Here we have identified $x\in H^2(\CP^2)$ with
$\pi^*(x)\in\H^2(N_t)\cong  H^2(P(E))$. Furthermore, the generator
$y$ is defined by $y=c_1(S^*)$ where $S^*$ is the dual of the
tautological complex line bundle $S$ over $P(E)$. Hence
\begin{equation}\label{cohbase}
\begin{aligned}
H^2(N_t) &\cong \Z \oplus \Z \,\text{with generators}\, x,y\,;\\
H^4(N_t) &\cong  \Z \oplus \Z \,\text{with generators}\,
x^2,xy,\,\text{and relationship}\, y^2 =
 -xy-t\,x^2\,;\\
H^6(N_t) &\cong  \Z \,\text{with generator}\, x^2y, \,\text{and}\,
x^3=0,y^2\,x = -x^2y,y^3=
(1-t)\,x^2y\,.\\
\end{aligned}
\end{equation}

Notice that, since the quadratic relationship $y^2 +x\,y +t\,x^2$
has discriminant $ t-\frac{1}{4}$, the manifolds
 $N_t$ all have different homotopy
type.

\bigskip

We now consider circle bundles over $N_t$. They are classified by
their Euler class $e\in H^2(N_t,\Z)$. For symmetry reasons, see
\cref{symm}, we define
$$\Sph^1\to  M^t_{a, b}
 \overset{\sigma}{\longrightarrow}  N_t
 \quad \text{ with }\quad  e(M^t_{a, b} )=ax +(a+b)y
$$
where $a,\, b $ are arbitrary integers. In order to ensure that
$M^t_{a,\, b }$ is simply connected we assume that $(a,\, b ) =1$. The total
space is oriented via the orientation class $x^2y\in H^6(N_t)$ on the base
and the orientation on the fiber given by $e$. Thus $M^t_{a, b}$ and $M^t_{-a, -b}$ are orientation reversing diffeomorphic.

\smallskip

For the basic  topological invariants of $M^t_{a,\, b }$ we obtain:

\begin{prop}\label{circletop}
The manifolds $M^t_{a,\, b }$ have cohomology type $E_r$ with
$r=|t(a+b)^2-ab|$,  as long as  $t(a^2+b^2)\ne ab$. Furthermore, the
first Pontryagin class is given by $p_1(TM^t_{a,\, b })\equiv
4(1-t)(a+b)^2 \text{ mod } r$.
\end{prop}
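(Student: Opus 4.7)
\medskip

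\noindent\textbf{Proof plan.}
My approach is to combine the Gysin sequence of the principal circle bundle $\sigma\colon M^t_{a,b}\to N_t$ with the cohomology ring of $N_t$ computed in \eqref{cohbase}, together with the fact that for a principal $\S^1$-bundle the tangent bundle satisfies $TM\oplus\R\cong\sigma^*(TN_t)$, so $p_1(TM)=\sigma^*(p_1(TN_t))$ and $w_2(TM)=\sigma^*(w_2(TN_t))$.

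Since $N_t$ has vanishing odd cohomology, Gysin gives $H^{2k}(M)\cong H^{2k}(N_t)/(e\cup H^{2k-2}(N_t))$, and $M$ is simply connected precisely when $\gcd(a,b)=1$, which is ensured by the hypothesis (the homotopy sequence reduces to $\pi_2(N_t)\to\pi_1(\S^1)\to \pi_1(M)\to 0$, and the first map is evaluation of $e$ on $x,y$, whose image is $\gcd(a,a+b)\Z=\gcd(a,b)\Z$). To obtain $H^4(M)$, I use the bases $\{x,y\}$ of $H^2(N_t)$ and $\{x^2,xy\}$ of $H^4(N_t)$, and the relation $y^2=-xy-tx^2$, so that $\cup e$ is represented by
\[
\begin{pmatrix} a & -t(a+b)\\ a+b & -b\end{pmatrix},
\]
of determinant $t(a+b)^2-ab$. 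Under the nonvanishing hypothesis the cokernel is finite of order $r=|t(a+b)^2-ab|$, and since the GCD of entries divides $\gcd(a,a+b)=1$, the Smith normal form is $(1,r)$, so $H^4(M)\cong\Z_r$ is cyclic. The groups $H^5,H^6,H^7$ follow either from another application of Gysin or Poincar\'e duality, giving the cohomology pattern of type $E_r$.

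To express things in terms of a generator $u\in H^2(M)\cong\Z$, I use surjectivity of $\sigma^*$ in degree $2$ and $\ker(\sigma^*)=\langle ax+(a+b)y\rangle$ to select $u$ with $\sigma^*(x)=-(a+b)\,u$ and $\sigma^*(y)=a\,u$. A direct check (solving $(n,0)=\alpha v_1+\beta v_2$ in the cokernel) shows $x^2$ has order $r$ in $H^4(M)$, so $\sigma^*(x^2)=(a+b)^2u^2$ is a generator of $H^4(M)$; since $r=t(a+b)^2-ab$ and $\gcd(a,b)=1$, any prime dividing both $a+b$ and $r$ would divide $ab$, forcing it to divide $a$ or $b$ (contradiction), so $\gcd(a+b,r)=1$ and $u^2$ is itself a generator.

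For $p_1$ I use $N_t=P(E)$ with $c_1(E)=x$, $c_2(E)=tx^2$. The relative Euler sequence yields $TN_t\oplus\C\cong\tau^*(T\CP^2)\oplus(\tau^*E\otimes S^*)$, and $c_2(\tau^*E\otimes S^*)=c_2(E)+y\,c_1(E)+y^2=0$ by the defining relation, so $c(\tau^*E\otimes S^*)=1+(x+2y)$. Multiplying by $c(T\CP^2)=(1+x)^3=1+3x+3x^2$ gives $c_1(TN_t)=4x+2y$ and $c_2(TN_t)=6x^2+6xy$; in particular $c_1(TN_t)\equiv 0\pmod 2$, so $w_2(TN_t)=0$, hence $M$ is spin and of type $E_r$. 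Then $p_1(TN_t)=c_1^2-2c_2=4x^2+4xy+4y^2=4(1-t)x^2$ after simplifying with $y^2=-xy-tx^2$, and pulling back yields $p_1(TM^t_{a,b})=4(1-t)\sigma^*(x)^2=4(1-t)(a+b)^2u^2\pmod r$, as claimed.

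The main obstacle is the bookkeeping in the third paragraph: identifying the correct generator $u$ of $H^2(M)$, verifying that $u^2$ (not merely $\sigma^*(x^2)$) generates $H^4(M)$, and tracking the signs. The projective-bundle Chern class computation in the last paragraph, while mechanical, is the second delicate point since it depends on the repeated use of $y^2+xy+tx^2=0$ to collapse higher-degree terms.
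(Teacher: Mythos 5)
Your proof is correct and follows the same skeleton as the paper's: the Gysin sequence of $\sigma$, the $2\times 2$ cup-product matrix with determinant $t(a+b)^2-ab$ and cyclic cokernel (via $\gcd(a,b)=1$), the choice of generator $u$ with $\sigma^*(x)=-(a+b)u$, $\sigma^*(y)=au$, and the pullback of $p_1(TN_t)$. The one place you genuinely diverge is the computation of $p_1(TN_t)$ and $w_2(TN_t)$: the paper splits $TN_t=\pi^*(T\CP^2)\oplus V$ with $V\oplus\Id=\pi^*(E^3)$, where $E^3$ is the rank three bundle of the $2$-sphere bundle, so $p_1(TN_t)=(4-4t)x^2$ and $w_2(TN_t)=\pi^*(2x)=0$ are read off from the defining data $p_1(Q_t)=(1-4t)x^2$, $w_2(Q_t)\ne 0$; you instead use $N_t=P(E)$ and the relative Euler sequence, getting $c(TN_t)=(1+x)^3(1+x+2y)$ and then $p_1=c_1^2-2c_2=4(1-t)x^2$ via the relation $y^2+xy+tx^2=0$. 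Both are valid and consistent (your conventions for $y=c_1(S^*)$ match the paper's Leray--Hirsch relation); your route avoids quoting the $\SO(3)$-bundle invariants but trades this for the Euler-sequence bookkeeping, and it produces $c_1(TN_t)=4x+2y$, hence $w_2(TN_t)=0$, as a by-product. You also verify explicitly that $u^2$ generates $H^4(M^t_{a,b};\Z)$, using that $x^2$ has order $r$ in the cokernel and $\gcd(a+b,r)=1$; the paper leaves this implicit, though it is needed for the manifold to be of cohomology type $E_r$. One small slip: the identity you state, $TM\oplus\R\cong\sigma^*(TN_t)$, is dimensionally wrong; it should be $TM\cong\sigma^*(TN_t)\oplus\R$ (trivial vertical bundle), which is what you in fact use and which does give $p_1(TM)=\sigma^*p_1(TN_t)$ and $w_2(TM)=\sigma^*w_2(TN_t)$.
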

\begin{proof}
The Gysin sequence for the bundle $\, M^t_{a,\, b }\to N_t \,$  yields
$\,H^2(M^t_{a,\, b };\Z)
 \cong H^5(M^t_{a,\, b };\Z) \cong \Z; H^1(M^t_{a,\, b };\Z) \cong H^3(M^t_{a,\, b };\Z) \cong H^6(M^t_{a,\, b };\Z)
 \cong 0\,$ and $H^4(M^t_{a,\, b };\Z) \cong \Z_r$ where $\Z_r$ is the cokernel of
$$0 \longrightarrow H^2(N_t;\Z)\stackrel{\cup e}{\longrightarrow}   H^4(N_t;\Z) \longrightarrow
 H^4(M^t_{a,\, b };\Z)\to 0 \,.$$
Since the cup product with the Euler class sends $x$ to $a\,x^2 +
(a+b)\, xy$ and $y$ to $ axy+(a+b)y^2=-t\,(a+ b) x^2\, -b\, xy$ we obtain for this
homomorphism that
$$
 \det\begin{pmatrix} a& - t(a+b)  \\
 a+b &  -b
 \end{pmatrix} = t(a+b)^2-ab
$$
and thus the cokernel has order $|t(a+b)^2-ab|$ as long as $t(a+b)^2-ab\ne 0 $.  One easily sees
that the cokernel is cyclic since $(a,\;
b)=1$, and hence $H^4(M^t_{a,\, b };\Z) \cong
\Z_r$ with $r =|t(a+b)^2-ab|$.

To compute the characteristic classes we observe that the tangent
bundles split:  $TM^t_{a,\, b }=\sigma^*(TN_t)\oplus\Id$ and  $TN_t
= \pi^*(T\CP^2) \oplus V$ with $V \oplus \Id = \pi^*(
 E^3)$, and where $E^3$ is the rank 3 vector bundle corresponding to the 2-sphere bundle $N_t$.
   Hence $p_1(TM^t_{a,\, b }) = \sigma^*\pi^*( 3x^2+ (1 - 4t)\,x^2)$.
   Applying
   the Gysin sequence again, we see that $0=\sigma^*(e)=
\sigma^*(ax+(a+b)y)$ and that $\sigma^*\colon H^2(N_t;\Z)\to H^2(M^t_{a,\, b };\Z)$ is onto. Since $(a,\, b )=1$,
this implies that there exists a generator
$u\in H^2(M^t_{a,\, b };\Z)\cong\Z$ with $\sigma^*(x)=-(a+b)u$ and
$\sigma^*(y)=au$. Hence $p_1(TM^t_{a,\, b })=4(1-t)(a+b)^2u^2$
and similarly  $M^t_{a,\, b }$ is spin since $w_2(TM^t_{a,\;
b})=\sigma^*(w_2(TN_t))$ and $w_2(TN_t)=\pi^*(2x)=0$.
\end{proof}

The Kreck-Stolz invariants for the manifolds $M^t_{a,\, b }$ were
computed in the special cases $t=\pm 1$ in \cite{KS1} and
\cite{AMP1}. For the general case below,  we will use  $s =t(a+b)^2-ab$,
$r=|s|$, since $s$ is not always positive.

\begin{prop}\label{KSCircle}
If $s =t(a+b)^2-ab\ne 0$, the Kreck-Stolz invariants for $M^t_{a,\, b }$   are given
by:
$$
\begin{aligned}
s_1(M^t_{a,\, b })   \equiv\,  & \frac{-1}{2^5 \cdot 7}\,\sign(W) -\frac{(a+b)(t-1)^2}{2^3 \cdot 7\cdot s} \\
                          & \qquad\qquad\qquad +\frac{a+b}{2^5\cdot 3 \cdot 7}\,\{3a\, b+(t-1)\,(8 + (a+b)^2) \}\ \ \  \mod 1  \,,
\end{aligned}
$$
$$
\begin{aligned}
 s_2(M^t_{a,\, b }&)   \equiv\, \frac{1}{2^3\cdot
3}\, \left\{  (t-1)\,(m+n)\,[2 - (a+b)\,(m+n) -2\,(m+n)^2]    \right.             \\
 &\qquad \qquad \quad \left. - a\,m(m+2\,n) - b\,n(n + 2\,m) - 6\,m\,n\,(m+n)   \right\}     \\
                          &+ \frac{1}{2^3\cdot
3\cdot s}\,\left\{     (t^2-1)  (a+b)(n+m)^2 (2-(n+m)^2)  \right.            \\
                          &+(t-1)\left[m^4(3a+b)+n^4(a+3b)-2(a+b)(m+n)^2+2(am^2+bn^2)(2nm-1)\right]        \\
   & \left.\qquad \qquad +am^4+bn^4-6m^2n^2(a+b) -4mn(an^2+bm^2)    \right\} \ \   \mod 1 \,,
 \end{aligned}
$$
$$
\begin{aligned}
s_3(M^t_{a,\, b }&)   \equiv\, \frac{1}{2\cdot
3}\,\left\{  (t-1)\,(m+n)\,[1 - (a+b)\,(m+n) -4\,(m+n)^2] \right.               \\
 &\qquad \qquad \ \ \left. - a\,m(m+2\,n) - b\,n(n + 2\,m)    \right\}  \\
                          &+ \frac{1}{
 3\cdot s}\,\left\{     (t^2-1)  (a+b)(n+m)^2 (1-2(n+m)^2)  \right.            \\
                          &+(t-1)[2m^4(3a+b)+2n^4(a+3b)-(a+b)(m+n)^2+(am^2+bn^2)(8nm-1)]        \\
   & \left. \qquad \ \  \quad +2am^4+2bn^4-12m^2n^2(a+b) -8mn(an^2+bm^2)    \right\}   \mod 1 \,,
\end{aligned}
$$
where $n,m \in \Z$ are chosen such that  $a\,m - b\,n =
 1$. Furthermore,
$$ \sign(W)=\left\{
   \begin{array}{lll}
     &0, &\ \text{ if }\ s>0 \\
     &2, &\ \text{ if }\ s<0 \ \text{ and } \  b+(1-t)(a+b)>0 \\
     -&2, & \ \text{ if }\  s<0 \ \text{ and } \ b+(1-t)(a+b)<0.
   \end{array}
 \right.
$$
\end{prop}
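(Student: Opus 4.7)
The plan is to take $W=W^t_{a,b}$ to be the total space of the disk bundle of the complex line bundle $L$ over $N_t$ with Euler class $c_1(L)=e=ax+(a+b)y$, as the natural bounding manifold; then $\partial W=M^t_{a,b}$ and $W$ deformation retracts onto the zero section, so I identify $H^*(W)=H^*(N_t)$ via $\bar\sigma\colon W\to N_t$. To satisfy \eqref{KSzc} I take $c=e\in H^2(W)$: since $TW|_{N_t}=TN_t\oplus L$ and $w_2(TN_t)=0$ one has $w_2(TW)\equiv c_1(L)\equiv c\mod 2$, while $c|_{\partial W}=\sigma^*(e)=0$. Given Bezout integers $m,n$ with $am-bn=1$, the class $z=nx+(m+n)y$ satisfies $\sigma^*(z)=\bigl(-n(a+b)+a(m+n)\bigr)u=(am-bn)u=u$, as required.

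Next I compute $p_1(TW)=\bar\sigma^*(p_1(TN_t))+c_1(L)^2=4(1-t)x^2+e^2$, expanded in the basis $\{x^2,xy\}$ of $H^4(N_t)$ via $y^2=-xy-tx^2$ from \eqref{cohbase}. For the signature of $W$, let $U\in H^2(W,\partial W)$ be the Thom class, so $j^*(U)=e$ and $U^2=U\cdot e$. Using the Thom identity $\langle\alpha\cdot U,[W,\partial W]\rangle=\langle\alpha,[N_t]\rangle$, the intersection form on $H^4(W,\partial W;\Q)$ in the basis $\{xU,yU\}$ has entries $\langle(\omega U)(\omega' U),[W,\partial W]\rangle=\langle\omega\omega'e,[N_t]\rangle$, giving the symmetric matrix
\begin{equation*}
\begin{pmatrix} a+b & -b\\ -b & b-t(a+b)\end{pmatrix}
\end{equation*}
with determinant $-s$ and trace $b+(1-t)(a+b)$, immediately yielding the stated value of $\sign(W)$.

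I then compute the nine characteristic numbers in \eqref{KSspin}. Since $c=j^*(U)$, the six terms containing a factor of $c$ collapse to six-dimensional pairings on $N_t$, namely $c^4=\langle c^3,[N_t]\rangle$, $c^2p_1=\langle c\,p_1(TW),[N_t]\rangle$, $zcp_1=\langle z\,p_1(TW),[N_t]\rangle$, $zc^3=\langle zc^2,[N_t]\rangle$, $z^2c^2=\langle z^2c,[N_t]\rangle$ and $z^3c=\langle z^3,[N_t]\rangle$, each of which is an elementary cup-product computation in the ring \eqref{cohbase}. The remaining three numbers $p_1^2$, $z^2 p_1$ and $z^4$ use the inverse of $j^*$, encoded in the bilinear form $(\omega,\omega')\mapsto\langle(j^*)^{-1}(\omega)\cup\omega',[W,\partial W]\rangle$ on $H^4(N_t;\Q)$, which in basis $\{x^2,xy\}$ has matrix
\begin{equation*}
\frac{1}{s}\begin{pmatrix}-(a+b) & a\\ a & t(a+b)-a\end{pmatrix}
\end{equation*}
and thereby introduces the denominators $s$ seen in the proposition.

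Substituting into the spin-case formulas for $S_1,S_2,S_3$ yields rational expressions in $a,b,t,m,n$, and what remains is a purely algebraic simplification: each $s_i$ must be separated into a polynomial part and a part divided by $s$, and the Bezout relation $am-bn=1$ together with $s=t(a+b)^2-ab$ must be used to reach the compact form stated. For example in $s_1$, where $z$ is absent, one combines the contributions of $p_1^2$, $c^2p_1$ and $c^4$ and uses $c^4=(1-t)(a+b)^3-3ab(a+b)$ to produce the single polynomial $\tfrac{a+b}{2^5\cdot 3\cdot 7}\{3ab+(t-1)(8+(a+b)^2)\}$; the analogous but longer manipulations for $s_2,s_3$ involve $z$ and absorb the auxiliary integers $m,n$ through $am-bn=1$. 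This bookkeeping, while the main obstacle, introduces no new ideas.
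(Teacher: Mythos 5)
Your proposal is correct and follows essentially the same route as the paper: the disk bundle of the line bundle over $N_t$ with Euler class $e$ as the bounding manifold, the choices $c=e$ and $z=nx+(m+n)y$ with $am-bn=1$, the signature matrix $\begin{pmatrix} a+b & -b\\ -b & b-t(a+b)\end{pmatrix}$, and evaluation of the characteristic numbers via the Thom class. Your only cosmetic deviation is packaging the terms without a factor of $c$ (namely $p_1^2$, $z^2p_1$, $z^4$) through the explicit inverse matrix of $j^*$, whereas the paper writes $s\,x^2$ and $s\,z^2$ as multiples of $c$ — the same computation, and your matrix and sample reductions (e.g.\ $c^4$ and the $s_1$ formula) agree with the paper's values.
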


\begin{proof}
 A natural choice
 for a bounding manifold is the  disk bundle $\gs'\colon
 W^8_{a,\, b } \longrightarrow N_t$ of the rank 2 vector bundle $E^2$ associated
 to the circle bundle $\gs$.
 Recall  from the proof of \pref{circletop} that there
exists a generator $u\in H^2(M^t_{a,\, b };\Z)\cong\Z$ with
$\sigma^*(x)=-(a+b)u$ and $\sigma^*(y)=au$. Furthermore, we
identified the cohomology of $W$ with that of $N_t$ via $\sigma'^*$.
In order to apply \eqref{KSspin}, we need to choose classes $z,c \in
H^2(W,\Z)\cong\Z\oplus\Z$ with $z{|\partial W} = u,$  $c|\partial W
= 0$ and $w_2(W) \equiv c \mod 2\,.$ If we choose integers $m,n$
with $a\,m - b\,n = 1$ and set $z = n\,x + (m+n)\,y$ and
$c=e(M^t_{a,b}) = a\,x + (a+b)\,y$, it follows that $\gs^*(z)=u,\,
\sigma^*(c)=0$. Furthermore, $w_2(TW) = \gs'^*(w_2(TN_t)) +
w_2(E^2)= e \mod 2$. Thus $z$ and $c$ have the
 properties required in \eqref{KSzc}. Using $p_1(TW) = \gs'^*(p_1(TN_t)) + p_1(E^2) = 4\,(1-t)\,x^2 +e^2$ and replacing the value of $p_1$ in
\eqref{KSspin} we obtain
$$\begin{aligned}
  S_1(W,c,z) &= -\frac{1}{2^5 \cdot 7}\,\sign(W)+ \frac{1}{2^5\cdot
3\cdot 7}\,(12\,(t-1)^2\,x^4 +
                     8\,(t-1)\,x^2\,c^2-c^4) ,
                     \\
S_2(W,c,z) &=\frac{1}{2^3\cdot 3}\,( 2\,(t-1)\,z\,c\,x^2 +
2\,(t-1)\,z^2\,x^2 + z^2\,c^2 + 2\,z^3\,c
                    + z^4 ),
                      \\
 S_3(W,c,z)
                     &=\frac{1}{2\cdot 3}\,((t-1)\,z\,c\,x^2 + 2\,(t-1)\,z^2\,x^2 + z^2\,c^2 + 4\,z^3\,c
                    + 4\,z^4 ).
  \end{aligned}$$

Recall that the orientation for $N_t$ is chosen so that $\langle x^2\,y, [N_t]\rangle = 1$.
The orientation for the vector bundle $E^2$ defines a Thom class
 $U \in H^2(W,\partial W)\cong\Z$ and we define the orientation on $W$ such that
$U\cap [W,
 \partial W]= [N_t]$.

  We first compute the characteristic numbers involving $c$ by using
   the fact that $j^*(U) = e(E^2)=c$ where $j: W \longrightarrow (W,\partial W)$:
$$\begin{aligned}
c^4 &= \langle c^3 \cup (j^*)^{-1}(c),[W,\partial W]\rangle=\langle c^3 \cup U,[W,\partial W]\rangle\\
       &= \langle(a\,x + (a+b)\,y)^3,[N_t]\rangle
       = a^3+b^3-t(a+b)^3 \\
z^2\, c^2 &= \langle c\,z^2 \cup U,[W,\partial W]\rangle
                = \langle(a\,x + (a+b)\,y) \, (n\,x + (m+n)\,y)^2,[N_t]\rangle \\
                &=  a\, n^2+b\, m^2 -t(a+b)(m+n)^2\\
z^3\, c &= \langle z^3 \cup U, [W,\partial W]\rangle
           = \langle(n\,x + (m+n)\,y)^3, [N_t]\rangle\\
           &= m^3+n^3 -t (m+n)^3\\
x^2\,c^2 &= \langle x^2\,c \cup U,[W,\partial W]\rangle
                = \langle x^2\,(a\,x + (a+b)\,y), [N_t]\rangle = a+b \\
z\,c\, x^2 &= \langle z\,x^2 \cup U,[W,\partial W]\rangle
                = \langle(n\,x + (m+n)\,y)\,x^2, [N_t]\rangle =m+n
\end{aligned}$$
A calculation shows that $s\, x^2 = - (b \,x  +(a+b) \,y) \cup c$ and hence
$$x^4 =- \frac 1 s \langle x^2 \,  (b \,x  +(a+b) \,y) , [N_t]\rangle = -
\frac{a+b}{s}$$
Similarly, $ z^2 =
(\alpha \,x + \beta \,y) \cup c$, where $s\, \alpha = at(n^2-m^2)+2btn(m+n)-n^2b  $ and $s\, \beta =
 t(a+b)(m+n)^2-a\, m^2-b\, n^2$.

 Thus
$$\begin{aligned}
z^4&= \langle z^2 \cup  (\alpha \,x + \beta \,y)\cup U , [W,\partial W]\rangle
=\langle (n\,x + (m+n)\,y)^2 \cup  (\alpha \,x + \beta \,y) , [N_t]\rangle\\
      &= \frac{1}{s}\,\{ -t^2(a+b)(m+n)^4+t[m^4(3a+b)+n^4(a+3b) +4nm(a\, m^2+b\, n^2)]\\
      &\qquad\qquad  -am^4-bn^4\}\\
 z^2\,x^2 &= \frac{1}{s}\,\langle(\alpha \,x + \beta \,y)\,x^2 , [N_t]\rangle =
                 \frac{1}{s}\,( t(a+b)(m+n)^2-a\, m^2-b\, n^2)
 \end{aligned}$$

\smallskip

We now compute the signature form $v\to \langle j^*(v) \cup v,[W,
\partial W]\rangle$ on $H^4(W,\partial W)$.
Since $x,y$ are a basis of $H^2(W)\cong H^2(N_t)$, the classes $x
\cup U, y \cup U$ are a basis of $H^4(W,\partial W)$. Using $j^*(x
\cup U)=x\cup j^*(U) = x\cup c = a\,x^2 + (a+b)\,xy$ and similarly
$j^*(y \cup U) = -t\,(a+b)\,x^2 -b\,xy$, we have
$$\begin{aligned}
\langle j^*(x\cup U) \cup x \cup U,[W,\partial W]\rangle &= \langle(a\,x^2 + (a+b)\,xy) \cup x,[N_t]\rangle  = a+b\\
\langle j^*(x\cup U) \cup y \cup U,[W,\partial W]\rangle &= \langle(a\,x^2 + (a+b)\,xy) \cup y, [N_t]\rangle = -b\\
\langle j^*(y\cup U) \cup x \cup U,[W,\partial W]\rangle &= \langle( -t\,(a+b)\,x^2 -b\,xy) \cup x, [N_t]\rangle = -b\\
\langle j^*(y\cup U) \cup y \cup U,[W,\partial W]\rangle &= \langle( -t\,(a+b)\,x^2 -b\,xy) \cup y, [N_t]\rangle = -t\,(a+b)+b\\
\end{aligned}$$
If we denote by $R$ the matrix of this signature form, one easily sees that $\det R=-s$ and since $\tr R=b+(1-t)(a+b)$, the signature is as claimed.

\smallskip

Combining all of the above, our claim follows from
\eqref{KSspin}.
 \end{proof}

\begin{rem*}
Notice that one can always choose $m$ or $n$ to be divisible by $4$, which
easily implies that the term $(t-1)\,(m+n)\,[2  -2\,(m+n)^2]    -
6\,m\,n\,(m+n)$ vanishes in $s_2$ and the term $(t-1)\,(m+n)\,[1
-4\,(m+n)^2]  \equiv 3 \mod 6$ in $s_3$. Thus a change $(a,b,m,n)\to
(b,a,-n,-m)$ gives the same Kreck-Stolz invariants, confirming the
orientation preserving diffeomorphism in \cref{symm}. Notice also
that a change $(a,b,m,n)\to (-a,-b,-m,-n)$ gives opposite
Kreck-Stolz invariants confirming that $M^t_{a,\, b }$ is
orientation reversing diffeomorphic to $M^t_{-a,\, -b }$.
\end{rem*}

Recall that for manifolds of type $E_r$ with $r>1$  the linking form is equal
to the characteristic number $z^4$ and we hence obtain:

\begin{cor}\label{linkingS1}
If $s =t(a+b)^2-ab$ and $r=|s|>1$, the linking form of $M^t_{a,\, b }$ is given by
$$lk(M^t_{a,\, b })=\frac{1}{s}\,\{ -t^2(a+b)(m+n)^4+t[m^4(3a+b)+n^4(a+3b)
 +4nm(a\, m^2+b\, n^2)]  -am^4-bn^4\} $$
      in $\Q/\Z$, where   $n,m \in \Z$ are chosen such that  $a\,m - b\,n =
 1$.
\end{cor}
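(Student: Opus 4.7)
The proof is a direct consequence of work already carried out in the proof of \pref{KSCircle}. By \cite{KS2} (see also \cite{Mo}), for any manifold $M$ of cohomology type $E_r$ with $r>1$, the linking form can be computed as the characteristic number
$$lk(M) \equiv \langle (j^*)^{-1}(z^2)\cup z^2,\,[W,\partial W]\rangle \pmod{1},$$
for any compact $8$-manifold $W$ with $\partial W = M$ and any class $z\in H^2(W,\Z)$ whose restriction to $\partial W$ generates $H^2(M,\Z)$. This is exactly the identification already invoked in the proof of \cref{linkingS3} for the sphere bundles.

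The plan is therefore to apply this identity to the specific bounding manifold and the class $z$ already produced in the proof of \pref{KSCircle}. There we chose $W = W^8_{a,b}$ to be the disk bundle of the rank-two vector bundle associated to the circle bundle $\sigma\colon M^t_{a,b}\to N_t$, and identified $H^*(W,\Z)$ with $H^*(N_t,\Z)$ via $\sigma'^*$. Choosing integers $m,n$ with $am - bn = 1$, we set $z = nx + (m+n)y$ and verified that $\sigma^*(z) = u$ generates $H^2(M^t_{a,b},\Z)$. Hence the hypotheses of the Kreck-Stolz identification of $lk$ with $z^4$ are met, with the same orientation conventions as used for the Kreck-Stolz invariants.

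In the same proof the characteristic number $z^4$ was evaluated explicitly as
$$z^4 = \frac{1}{s}\Bigl\{-t^2(a+b)(m+n)^4 + t\bigl[m^4(3a+b)+n^4(a+3b)+4mn(am^2+bn^2)\bigr] - am^4 - bn^4\Bigr\}.$$
Substituting this expression into the displayed identity $lk(M^t_{a,b})\equiv z^4 \pmod{1}$ yields the formula stated in the corollary. No substantive obstacle arises: the only point to check is that the data $(W,z)$ used for the Kreck-Stolz computation satisfies the hypotheses of the Kreck-Stolz identification of the linking form, which is immediate from the construction of $z$ and from the restriction property $z|_{\partial W}=u$.
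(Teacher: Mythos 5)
Your proposal is correct and is essentially identical to the paper's argument: the paper likewise invokes the identification (from Kreck--Stolz, cf.\ the proof of \cref{linkingS3}) of the linking form with the characteristic number $z^4$ and then simply reads off the value of $z^4$ already computed for the disk bundle $W^8_{a,b}$ with $z = nx+(m+n)y$ in the proof of \pref{KSCircle}. No further comment is needed.
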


\begin{rem*}
As far as the homotopy invariant $\pi_4(M^t_{a,b})$ is concerned, we
notice that it does not depend on $a,b$ as follows from the circle
bundle $\S^1\to P_t\to M^t_{a,b}$. This implies that
$\pi_4(M^t_{a,b})=\Z_2$ for $t$ even. Indeed, if $t$ even, $a$ even
and $b$ odd, the order $r=|t(a+b)^2-ab|$ is  even and the claim
follows from \tref{Ktop} (a). If $t$ is odd, we suspect that
$\pi_4(M^t_{a,b})=0$. For example, from \pref{t=pm 1} it follows
that $\pi_4(M^{\pm 1}_{a,b})=0$.
\end{rem*}

\bigskip

\section{Geometry  of  sphere bundles and circle Bundles}
\label{geomspherecircle}

\bigskip

In this section we study the geometry of the sphere and circle bundles defined
in Section 3 and 4 and the relationships between them.

We remind the reader of the various bundle structures in Section 3 and 4 in the following diagram:

\smallskip

\begin{equation*}
\begin{split}
\xymatrix{
\S^1 \ar[d] & \S^1 \ar[d] \\
\U(2) \ar[r] \ar[d]^{/Z}& P_t \ar[r]^\tau \ar[d]^{/Z} &
\CP^2 \ \\
\SO(3) \ar[r] & Q_t \ar[r]  &
\CP^2 \ \\
\CP^1 \ar[r] & N_t \ar[r]^\pi  &
\CP^2  }\qquad\qquad
\xymatrix{
\T^2 \ar[rd] & \S^1 \ar[d] &\S^1\ar[d]\\
\S^1 \ar[r] & P_t \ar[r] \ar[d] \ar[rd] &
M^t_{a,b}\ar[d]^\sigma \ \\
\S^1 \ar[r] & Q_t \ar[r]  &
N_t \  \\
\Sph^3 \ar[r] & S_{a,b} \ar[r]^\pi  &
\CP^2  }
\end{split}
\end{equation*}

\smallskip

In particular, recall that we can regard the $2$-sphere bundle $N_t$ as the projectivization of
a rank $2$ complex vector bundle
$P_t \times_{\U(2)}\C^2$ with $c_1(P_t) = x\,,\,c_2(P_t)=t\,x^2$ and that
$N_t=P_t/\T^2$ as well. We defined $M^t_{a,b}$ as the circle bundle over $N_t$ with
Euler class $e=ax+(a+b)y$.

\newpage

\bigskip

\begin{center}
{\it A different description of $M^t_{a,b}$.}
\end{center}

\bigskip

\begin{prop}\label{same}
The circle bundle $\S^1\to M^t_{a,\, b }\to N_t$ can be equivalently
described as the circle bundle $\T^2/\S^1_{a,\, b }\to
P_t/\S^1_{a,\, b }\to P_t/\T^2$, where
$\S^1_{a,b}=\diag(z^{a},z^{b})\subset\T^2\subset \U(2)$.
\end{prop}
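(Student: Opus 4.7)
The strategy is to recognize both manifolds as principal $\S^1$-bundles over $N_t$, compute the Euler class of $P_t/\S^1_{a,b}\to P_t/\T^2$, and check that it agrees with the defining class $e=ax+(a+b)y$ of $M^t_{a,b}$. The conclusion then follows since principal $\S^1$-bundles over $N_t$ are classified up to isomorphism by their Euler class in $H^2(N_t;\Z)$.

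First I would verify that $P_t/\S^1_{a,b}\to N_t$ is a principal $\S^1$-bundle. Since $\U(2)$ acts freely on $P_t$, its closed subgroup $\S^1_{a,b}$ acts freely too, so $P_t/\S^1_{a,b}$ is a smooth manifold and the residual $\T^2/\S^1_{a,b}$-action on it is free. The assumption $\gcd(a,b)=1$ guarantees that $\S^1_{a,b}$ is primitive in $\T^2$, so $\T^2/\S^1_{a,b}\cong\S^1$ and the projection $P_t/\S^1_{a,b}\to P_t/\T^2=N_t$ is a principal circle bundle.

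Next I would compute its Euler class by regarding $P_t\to N_t$ as a principal $\T^2$-bundle. Let $\alpha_i\in H^2(N_t;\Z)$ ($i=1,2$) denote its characteristic classes, i.e.\ the Euler classes of the circle bundles obtained by extending along the two projections $\pi_i\colon\T^2\to\S^1$; concretely, $\alpha_1$ is the Euler class of $P_t/\S^1_{0,1}$ and $\alpha_2$ that of $P_t/\S^1_{1,0}$. For a character $\chi(z_1,z_2)=z_1^p z_2^q$ the associated principal bundle $P_t\times_\chi\S^1=P_t/\ker\chi$ has Euler class $p\alpha_1+q\alpha_2$. To realize our bundle this way, choose $\chi(z_1,z_2)=z_1^{-b}z_2^{a}$; the condition $\gcd(a,b)=1$ ensures $\ker\chi=\S^1_{a,b}$ and that $\chi$ induces an isomorphism $\T^2/\S^1_{a,b}\cong\S^1$. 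Hence the Euler class of $P_t/\S^1_{a,b}\to N_t$ is $-b\,\alpha_1+a\,\alpha_2$.

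The key identification is then $\alpha_1=-y$ and $\alpha_2=x+y$. Under the identification $N_t=P_t/\T^2\cong P(E)$ with $E=P_t\times_{\U(2)}\C^2$, the first coordinate axis of a $\U(2)$-frame defines the tautological subbundle $\ell_1=S\subset\pi^* E$, while the second axis defines the complementary line $\ell_2\cong\pi^* E/S=Q$. A point of $P_t/\S^1_{0,1}$ is a pair $(v_1,\ell_2)$ with $v_1\in\ell_1$, with residual $\S^1$-action scaling $v_1$; the associated line bundle is therefore $S$, giving $\alpha_1=c_1(S)=-y$ (recalling $y=c_1(S^*)$). The analogous argument, or the splitting principle $c_1(S)+c_1(Q)=\pi^* c_1(E)=x$, yields $\alpha_2=c_1(Q)=x+y$. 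Substituting,
\begin{equation*}
-b\,\alpha_1+a\,\alpha_2=-b(-y)+a(x+y)=ax+(a+b)y,
\end{equation*}
which is precisely the Euler class defining $M^t_{a,b}$, so the two bundles are isomorphic. The main obstacle to be careful about is the bookkeeping of signs and conventions: which circle factor of $\T^2$ corresponds to which tautological line on $P(E)$, and the choice of character $\chi$ (which amounts to an orientation of the fiber); once fixed consistently with the conventions $y=c_1(S^*)$ and $c_1(E)=x$ of the excerpt, the identification of Euler classes is immediate.
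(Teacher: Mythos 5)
Your argument is correct, and it follows the same overall strategy as the paper — view $P_t\to N_t=P_t/\T^2$ as a principal $\T^2$-bundle, note that the Euler class of $P_t/\S^1_{a,b}$ depends linearly on the character defining the quotient circle, and pin it down by evaluating on a basis — but the two halves are executed differently. For linearity, the paper runs an explicit classifying-space/transgression argument (using $H^*(P_t)\cong H^*(\Sph^3\times\Sph^5)$ and the diagram of fibrations over $B_{\T^2}$ and $B_{\S^1}$), whereas you invoke the standard homomorphism from characters of $\T^2$ to $H^2(N_t;\Z)$; same content, less machinery spelled out. For the basis evaluation, the paper uses the circles $(0,1)$ and $(1,-1)$, identifying $P_t/\S^1_{0,1}$ with the circle bundle of the dual tautological bundle over $P(E)$ and $P_t/\S^1_{1,-1}$ with the pullback of the Hopf bundle via the determinant circle bundle \eqref{cherncircle}; you instead evaluate on the coordinate characters, getting $c_1(S)=-y$ and $c_1(Q)=\pi^*c_1(E)-c_1(S)=x+y$ from the tautological exact sequence and Whitney sum, which is a slightly slicker route that avoids the Hopf-bundle identification. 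You correctly flag the one real pitfall — which coordinate axis of the frame corresponds to the tautological line, and which character (i.e.\ fiber orientation) is used; your choices (first axis $\leftrightarrow S$, character $z_1^{-b}z_2^{a}$) agree with the paper's conventions: the paper identifies $\U(2)/\diag(1,z)$ with $\Sph^3$ via the first column, and orients the fiber by the action of $\diag(e^{-ib\theta},e^{ia\theta})$, which your character sends to $e^{i(a^2+b^2)\theta}$, so the identification holds as oriented circle bundles, which is what the later Euler-class computations require.
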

\begin{proof}
We first claim that, using the  basis $x,y\in H^2(P_t/\T^2)$ from Section 4,
the first Chern class $c_1(P_t/\S^1_{a,\, b })={r}x+{s}y$ for some
functions ${r},{s}$ linear in $a,\, b $. To see this, observe that
$H^*(P_t)\cong H^*(\Sph^3\times\Sph^5)$ since in the spectral
sequence of the principal bundle $\U(2)\to P_t\to\CP^2$, the
differential $d_2\colon H^1(\U(2))\to H^2(\CP^2)$ takes a
generator to $c_1(P_t)= x$. This holds for any $\U(2)$ principal bundle,
as can be seen by observing that this is true in the universal bundle and
 hence via pullback for any $\U(2)$-bundle. Now consider the commutative diagram of fibrations:
\begin{equation*}
\begin{split}
\xymatrix{
\T^2 \ar[r] \ar[d]^f& P_t \ar[r] \ar[d] &
P_t/\T^2 \ar[d]^{id}\ar[r]^{g_1} &  B_{\T^2}\ar[d]^{B_f} \\
\S^1\simeq \T^2/\S^1_{a,\, b } \ar[r] & P_t/\S^1_{a,\, b }
\ar[r]^{h} & P_t/\T^2 \ar[r]^{g_2} & B_{S^1} }
\end{split}
\end{equation*}
where $g_1$ and $g_2$ are the classifying maps of the respective
$\T^2$ bundle and $\S^1$ bundle. If we choose bases $\lambda\in
H^1(\S^1)$ and $\mu,\nu\in H^1(\T^2)$, the projection $f$ induces a
map $f^*\colon H^1(\S^1)\to H^1(\T^2)$ with
$f^*(\lambda)=\tilde{r}\mu+\tilde{s}\nu$ for some functions
$\tilde{r},\tilde{s}$ linear in $a,\, b $. Thus via transgression $
H^1(\S^1)\cong H^2(B_{\S^1})$ and $H^1(\T^2)\cong H^2(B_{\T^2})
   $   it follows that $B_f^*(\bar{\lambda})= \tilde{r}\bar{\mu}+\tilde{s}\bar{\nu}$.
 From the spectral sequence of $P_t
\to P_t/\T^2 \to B_{\T^2}$ it follows that $g_1^*$ is an isomorphism
in $H^2$. Since $g_2^*$ takes the canonical generator in
$H^2(B_{\S^1}) \cong \Z$ to $c_1( P_t/\S^1_{a,\, b })$, it follows that
$c_1(P_t/\S^1_{a,\, b
})=\tilde{r}g_1^*(\bar{\mu})+\tilde{s}g_1^*(\bar{\nu})$. Via a basis
change the claim follows.

\smallskip

We now show that $c_1(P_t/\S^1_{0,1})=y$ and $c_1(P_t/\S^1_{1,-1})=x$
 and thus $c_1(P_t/\S^1_{a,\; b})=ax+(a+b)y$
which implies that $M^t_{a,\, b }= P_t/\S^1_{a,\; b}$. To evaluate
the two Euler classes, we need to specify the orientation of the
circle bundle $\T^2/\S^1_{a,\, b } \to P_t/\S^1_{a,\, b } \to
P_t/\T^2$. If the Lie algebra of $\T^2$ is endowed with its natural
orientation, the Lie algebras of $\S^1_{a,\, b }$ and $\S^1_{-b,a}$
form an oriented basis. Thus the action of
$\diag(e^{-b\theta},e^{a\theta})\subset\U(2)$ on $P_t$ induces a
(possibly ineffective)   circle action on $P_t/\S^1_{a,\; b}$, which
is the orientation we will use in the following.

To see that $c_1(P_t/\S^1_{0,1})=y$, recall that $y=c_1(S^*)$ where
$S$ is the canonical line bundle over $P(E)=P_t\times_{\U(2)}\CP^1$.
 Consider the circle bundle
 $$\S^1\simeq \T^2/\diag(1,z)\to\U(2)/\diag(1,z)
 \simeq\Sph^3\to \U(2)/\T^2\simeq \CP^1.$$
  We identify $\U(2)/\diag(1,z)$ with $\Sph^3$
  by sending $A\in\U(2)$ to its first column vector. Thus the
  action of $\diag(e^{i\theta},1)$ on $\U(2)/\diag(1,z)$ from the right
  is multiplication by $e^{i\theta}$ in both coordinates and hence
  the Hopf action, which shows that this is
the canonical line bundle over $\CP^1$. It  follows that
$$\T^2/\diag(1,z)\to P_t\times_{\U(2)}\U(2)/\diag(1,z)\to P_t
\times_{\U(2)}\U(2)/\T^2\simeq P(E)$$
 is the canonical line bundle over
$P(E)$. The projection onto the first coordinate induces an
isomorphism $P_t\times_{\U(2)}\U(2)/\diag(1,z)\simeq
P_t/\diag(1,z)$, with circle action given by right multiplication
with $\diag(e^{i\theta},1)$. But this is the opposite orientation to
the oriented circle action on $P_t/\S^1_{0,1}\to P_t/\T^2$. Thus
this bundle is dual to the canonical line bundle, which proves our
claim.

\smallskip

To see that $c_1(P_t/\S^1_{1,-1})=x=\pi^*(x)$, consider the diagram of circle
fibrations:
\begin{equation*}
\begin{split}
\xymatrix{
 \U(2)/\SU(2) \ar[r] \ar[d]&
{(P_t)}\times_{\SU(2)} \CP^1 \ar[r] \ar[d]^{\pi_1} &
{(P_t)\times}_{\U(2)} \CP^1\simeq N_t \ar[d]^{\pi_1=\pi} \\
\U(2)/\SU(2) \ar[r] & P_t/\SU(2)\ar[r]
 & P_t/\U(2)\simeq \CP^2 }
\end{split}
\end{equation*}
The fibers of these bundles are oriented via the isomorphism induced
by the homomorphism $\det:\U(2)\to\S^1$ and hence right
multiplication by $\diag(e^{i\theta},1)\subset\U(2)$ induces a
circle action with the correct orientation. The lower circle bundle
is the Hopf bundle since by \eqref{cherncircle} it has Euler class $c_1(P_t)=x$.   Thus $c_1=x$ for the upper
circle bundle as well. But the total space is identified with
$(P_t)\times_{\SU(2)} \CP^1 \simeq (P_t)\times_{\SU(2)}
\SU(2)/\diag(z,\bar{z})\simeq P_t/\diag(z,\bar{z})\simeq
P_t/\S^1_{1,-1}$. The circle action by $\U(2)/\SU(2)$ is the right
action by $\diag(e^{i\theta},1)$ on $P_t/\S^1_{1,-1}$, whereas the
natural circle action is given by right multiplication with
$\diag(e^{i\psi},e^{i\psi})$. To see that both circle actions agree,
observe that
$\diag(e^{i\theta},1)=\diag(e^{i\psi},e^{i\psi})\cdot\diag(z,\bar{z})$
for $z=e^{i\psi}\; , \; \theta=2\psi$ and that the action by
$\diag(e^{i\psi},e^{i\psi})$ is $\Z_2$ ineffective.
\end{proof}

\begin{rem*}
There is another natural basis $\bar x,\bar y$ of $H^2(N_t,\Z)$
given by transgression in the fiber bundle $\T^2\to P_t\to N_t$ of
the natural basis in $H^1(T^2,\Z)$ corresponding to the splitting
$T^2=\diag (e^{i\theta},e^{i\psi})\subset\U(2)$. The Euler class of
the circle bundle $\T^2/\S^1_{a,\, b }\to P_t/\S^1_{a,\, b }\to
P_t/\T^2$ is then given by $-b\bar x +a \bar y$. Thus \pref{same}
implies that $\bar x=-y,\ \bar y=x+y$ and hence $e(M^t_{a,b})=a\bar
x+b\bar y$.
\end{rem*}

In \cite{GZ2} it was shown that $\U(2)$ principal bundles over
$\CP^2$ with $w_2\ne 0$ admit a metric with non-negative sectional
curvature invariant under the action of $\U(2)$. Hence, as a
consequence of \pref{same} and O'Neil's formula we obtain:

\begin{cor}\label{K >= 0}
The manifolds $M^t_{a,\, b }$ admit a metric with non-negative
sectional curvature for any integers $a,\, b\, ,t$.
\end{cor}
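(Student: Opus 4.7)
The plan is to combine Proposition~\ref{same}, the main non-negative curvature result of \cite{GZ2} for $\U(2)$-principal bundles over $\CP^2$, and O'Neill's formula for Riemannian submersions. By Proposition~\ref{same}, the manifold $M^t_{a,b}$ is diffeomorphic to the quotient $P_t/\S^1_{a,b}$, where $\S^1_{a,b}=\{\diag(z^a,z^b):|z|=1\}\subset \T^2\subset \U(2)$ acts on $P_t$ from the right. Since we have assumed $(a,b)=1$ (imposed in Section~\ref{circle} to ensure $M^t_{a,b}$ is simply connected), this action is free, so the quotient is a smooth manifold.

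Next I would observe that $c_1(P_t)=x$ is not divisible by $2$, so $w_2(P_t)=c_1(P_t)\bmod 2\ne 0$. Hence the Grove-Ziller construction of \cite{GZ2} applies: $P_t$ admits a $\U(2)$-invariant metric of non-negative sectional curvature. In particular this metric is invariant under the right action of the subgroup $\S^1_{a,b}\subset \U(2)$.

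Finally, since the right action of $\S^1_{a,b}$ on $P_t$ is free and by isometries, the natural projection $P_t\to P_t/\S^1_{a,b}\cong M^t_{a,b}$ is a Riemannian submersion. By O'Neill's formula, horizontal sectional curvatures can only increase under submersion, so the induced quotient metric on $M^t_{a,b}$ has non-negative sectional curvature. There is no real obstacle: the geometric content lies entirely in the Grove-Ziller theorem on $P_t$, and Proposition~\ref{same} together with O'Neill's formula translate it without loss into non-negative curvature on every $M^t_{a,b}$.
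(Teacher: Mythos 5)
Your proposal is correct and follows exactly the paper's argument: Proposition~\ref{same} identifies $M^t_{a,b}$ with $P_t/\S^1_{a,b}$, the Grove--Ziller result of \cite{GZ2} gives a $\U(2)$-invariant non-negatively curved metric on $P_t$ since $w_2(P_t)=c_1(P_t)\bmod 2=x\bmod 2\ne 0$, and O'Neill's formula passes non-negative curvature to the free isometric quotient. No differences of substance from the paper's proof.
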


Since $\S^1_{a,b}\subset\U(2)$ is conjugate to $\S^1_{b,a}\subset\U(2)$, we have

\begin{cor}\label{symm}  The manifolds $M^{t}_{a,\, b }$ and $M^{t}_{b,\, a }$
are orientation preserving diffeomorphic.
\end{cor}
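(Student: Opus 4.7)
The plan is to exhibit the diffeomorphism explicitly, using the reinterpretation $M^t_{a,b}\simeq P_t/\S^1_{a,b}$ from \pref{same}. Let $w=\bigl(\begin{smallmatrix} 0 & 1\\ 1 & 0\end{smallmatrix}\bigr)\in\U(2)$. A direct computation gives $w\,\diag(z^a,z^b)\,w^{-1}=\diag(z^b,z^a)$, so conjugation by $w$ carries $\S^1_{a,b}$ onto $\S^1_{b,a}$. Right multiplication by $w$ is a diffeomorphism $R_w\colon P_t\to P_t$ that intertwines the two circle actions, since for $s\in\S^1_{a,b}$ one has $R_w(p\cdot s)=p\cdot s\cdot w=p\cdot w\cdot (w^{-1}sw)\in R_w(p)\cdot \S^1_{b,a}$. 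Hence $R_w$ descends to a diffeomorphism
\[
\phi\colon P_t/\S^1_{a,b}\longrightarrow P_t/\S^1_{b,a},
\]
which under \pref{same} gives the desired diffeomorphism $M^t_{a,b}\to M^t_{b,a}$.

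For the orientation, first I would recall that $P_t$, being a principal $\U(2)$-bundle over the oriented manifold $\CP^2$ with oriented fiber $\U(2)$, carries a natural orientation; and since $\U(2)$ is connected, the self-diffeomorphism $R_w$ is isotopic to the identity and therefore preserves this orientation. Second, the orientation on $P_t/\S^1_{a,b}$ used in \sref{circle} is the one induced from the natural orientation of $P_t$ by dividing out the oriented vertical $\S^1_{a,b}$-direction (the generator of $\S^1_{a,b}$ being $(ia,ib)\in\ft^2$, with the complement spanned by $(-b,a)$, which is the orientation convention appearing in the proof of \pref{same}). The differential $R_{w*}$ sends the $\S^1_{a,b}$-orbit direction $\tfrac{d}{d\theta}\bigl|_0 p\cdot \diag(e^{ia\theta},e^{ib\theta})$ to $\tfrac{d}{d\theta}\bigl|_0 R_w(p)\cdot \diag(e^{ib\theta},e^{ia\theta})$, i.e.\ to the positively oriented generator of the $\S^1_{b,a}$-orbit at $R_w(p)$. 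Combining these two facts, $\phi$ must preserve the induced orientations.

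The main obstacle is the bookkeeping of orientations: the definition of $M^t_{a,b}$ in \sref{circle} uses the base-plus-Euler-class orientation on $N_t$, while the identification in \pref{same} uses the bi-invariant orientation on the principal bundle $P_t$ together with the convention $(\S^1_{a,b},\S^1_{-b,a})$ for the splitting of $\T^2$. One therefore needs to check that these two recipes give the same orientation on $M^t_{a,b}$ (equivalently, that $\phi$ is not orientation-reversing). This is already implicit in the remark following \pref{KSCircle}: the symmetry $(a,b,m,n)\to(b,a,-n,-m)$ leaves the Kreck--Stolz invariants $s_1,s_2,s_3$ unchanged rather than negating them, which by \tref{KSdiffeo} forces $\phi$ to be orientation preserving. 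Thus either the explicit tangential argument above, or a cross-check against the Kreck--Stolz computation, closes the proof.
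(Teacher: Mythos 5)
Your proposal is correct and takes essentially the same route as the paper: Corollary \ref{symm} is deduced from the conjugacy of $\S^1_{a,\,b}$ and $\S^1_{b,\,a}$ inside $\U(2)$ together with the identification $M^t_{a,\,b}=P_t/\S^1_{a,\,b}$ of \pref{same}, with the paper confirming the orientation statement only via the Kreck--Stolz remark following \pref{KSCircle}. Your explicit tangential check (right translation $R_w$ preserves a fixed orientation of $P_t$ and carries the oriented $\S^1_{a,\,b}$-orbit direction to the oriented $\S^1_{b,\,a}$-orbit direction) is a sound elaboration of what the paper leaves implicit, and unlike the Kreck--Stolz cross-check it also covers the degenerate case $t(a+b)^2=ab$, where the manifolds are not of type $E_r$ and \tref{KSdiffeo} does not apply.
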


\bigskip

\begin{center}
{\it Relationship with previously defined manifolds.}
\end{center}

\bigskip

\begin{prop}\label{t=pm 1}
For $t=\pm 1$ one has the following identifications. $N_{1}$ is the
homogeneous flag manifold $\SU(3)/\T^2$ and $M^{1}_{a,\, b }$ the
Aloff-Wallach space $W_{a,\, b }$.
 $N_{-1}$ is the
inhomogeneous flag manifold $\SU(3)/\!/\T^2$ and $M^{-1}_{a,\, b }$
is the Eschenburg space $F_{a,\, b }$.
\end{prop}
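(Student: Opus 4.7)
By Proposition~\ref{same}, $N_t=P_t/\T^2$ and $M^t_{a,b}=P_t/\S^1_{a,b}$, so it is enough to realize $P_t$ for $t=\pm1$ as an explicit principal $\U(2)$-bundle over $\CP^2$; the descriptions of $N_t$ and $M^t_{a,b}$ then follow by restricting the principal $\U(2)$-action to $\T^2$ and to $\S^1_{a,b}=\{\diag(z^a,z^b)\}\subset\T^2\subset\U(2)$.

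For $t=1$, take $P_1=\SU(3)$ with the standard principal $\U(2)$-bundle structure coming from the embedding $\U(2)\hookrightarrow\SU(3)$, $A\mapsto\diag(A,\det A^{-1})$. To verify that this is $P_1$, compute the Chern classes of the associated $\C^2$-bundle via the Borel presentation
\[
H^*(\CP^2)\cong H^*(B_{\U(2)})/(c'_2,c'_3),
\]
using that the standard three-dimensional representation of $\SU(3)$ restricts on $\U(2)$ as $\rho_{\mathrm{std}}\oplus\det^{-1}$; this yields $c'_2=c_2-c_1^2$ and $c'_3=-c_1c_2$, forcing $c_2=c_1^2$ in $H^*(\CP^2)$, so $c_1(P_1)=x$ and $c_2(P_1)=x^2$. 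The identification $N_1=\SU(3)/\T^2$ is then immediate, and since $\S^1_{a,b}\subset\T^2$ sits in $\SU(3)$ as $\diag(z^a,z^b,\bar z^{a+b})$, we obtain $M^1_{a,b}=W_{a,b}$.

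For $t=-1$, endow $\SU(3)$ with the non-standard biquotient $\U(2)$-action $U\cdot A=\phi_1(U)\,A\,\phi_2(U)^{-1}$, where $\phi_1,\phi_2\colon\U(2)\to\U(3)$ are given by $\phi_1(U)=\diag(U,\det U)$ and $\phi_2(U)=\diag(1,1,\det U^2)$. This action preserves $\SU(3)$ since $\det\phi_1=\det\phi_2=\det^2$, and is free under the Eschenburg conditions \eqref{free} applied to $k=(a,b,a+b)$ and $l=(0,0,2(a+b))$. Restricting to $\T^2=\{\diag(z,w)\}\subset\U(2)$ recovers the biquotient action $(z,w)\cdot A=\diag(z,w,zw)\,A\,\diag(1,1,(zw)^{-2})$ defining $N_{-1}=\SU(3)/\!/\T^2$, and restricting to $\S^1_{a,b}$ produces the action $A\mapsto\diag(z^a,z^b,z^{a+b})\,A\,\diag(1,1,z^{-2(a+b)})$, which is exactly the Eschenburg biquotient $F_{a,b}=E_{(a,b,a+b),(0,0,2(a+b))}$.

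The main obstacle is verifying that this biquotient realizes $P_{-1}$ as a principal $\U(2)$-bundle over $\CP^2$ with $c_1=x$, $c_2=-x^2$. Both the identification of the base and the Chern class computation come from the Borel formula for biquotients
\[
H^*(\SU(3)/\!/\U(2))\;\cong\;H^*(B_{\U(2)})/\bigl(\phi_1^*-\phi_2^*\bigr)\bigl(H^*(B_{\SU(3)})^+\bigr).
\]
Expanding the Chern polynomials of $\phi_1$ and $\phi_2$ gives $\phi_1^*c'_2=c_1^2+c_2$, $\phi_1^*c'_3=c_1c_2$, while $\phi_2^*c'_2=\phi_2^*c'_3=0$; the resulting ideal of relations is generated by $c_1^2+c_2$ and $c_1c_2$, which reduces to $c_2=-c_1^2$ and $c_1^3=0$. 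Hence the base is $\CP^2$, and the Chern classes of the principal $\U(2)$-bundle $\SU(3)\to\CP^2$ are $c_1(P_{-1})=x$ and $c_2(P_{-1})=-c_1^2=-x^2$, identifying this biquotient with $P_{-1}$. The claimed descriptions of $N_{-1}$ and $M^{-1}_{a,b}=F_{a,b}$ then follow immediately from the two subgroup restrictions computed above.
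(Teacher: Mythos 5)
Your overall skeleton is the same as the paper's: realize $P_{1}$ and $P_{-1}$ as $\SU(3)$ equipped with the homogeneous $\U(2)$-action, respectively Eschenburg's biquotient $\U(2)$-action, and then quotient by $\T^2$ and by $\S^1_{a,b}$, invoking \pref{same}. Where you differ is in how the parameter $t$ is pinned down: the paper identifies the associated $\SO(3)$-bundles $P/Z$ with $W_{1,1}$ and $F_{1,1}$ and reads off $p_1=1-4t$ from $|H^4|$ via \eqref{SO3} and \eqref{SO3p1}, whereas you compute $c_1$ and $c_2$ of the $\U(2)$-bundle directly from the Borel-type presentation of $H^*$ of the (bi)quotient. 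Your computations are correct ($c_2=c_1^2$ in the homogeneous case, $c_2=-c_1^2$ in the biquotient case, matching $p_1(Q_{\pm1})=\mp3,\,5$), and your version of the biquotient action (with the $\U(2)$-block on the left) has the advantage of restricting on $\S^1_{a,b}$ literally to $E_{(a,b,a+b),(0,0,2(a+b))}=F_{a,b}$, without the implicit $A\mapsto A^{-1}$ the paper needs.

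Two points need patching. First, freeness of the full $\U(2)$-biquotient action is not "free under the Eschenburg conditions \eqref{free} applied to $k=(a,b,a+b)$, $l=(0,0,2(a+b))$": condition \eqref{free} concerns the circle actions only, and checking it for the circles $\S^1_{a,b}$ does not by itself give freeness of $\U(2)$ (elements generating dense subgroups of $\T^2$ lie on no such circle). Either cite Eschenburg \cite{E2}, as the paper does, or argue that since $\phi_1,\phi_2$ are homomorphisms, conjugating an element into the maximal torus transports fixed points, so freeness reduces to freeness of the $\T^2$-action defining $\SU(3)/\!/\T^2$. Second, and more substantively, your identification of the base with $\CP^2$ in the $t=-1$ case rests only on the cohomology presentation $\Z[c_1]/(c_1^3)$. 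That determines the homotopy (and by Freedman the homeomorphism) type, but your next step — identifying the bundle with $P_{-1}$ via the classification of $\U(2)$-bundles over $\CP^2$ by $(c_1,c_2)$ — needs the base to be \emph{diffeomorphic} to $\CP^2$, and a smooth $4$-manifold homeomorphic to $\CP^2$ is not known in general to be standard. This is exactly why the paper inserts the geometric step "dividing by $\SU(2)$ first": the $\SU(2)\subset\U(2)$ quotient is $\Sph^5$, the residual circle acts linearly and freely with weights $(\pm1,\pm1,\mp1)$, and the quotient is $\CP^2$ smoothly. Add that argument (or an equivalent one) and your proof is complete; in the homogeneous case there is no issue, since $\SU(3)/\U(2)$ is the Grassmannian, hence genuinely $\CP^2$.
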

\begin{proof}  If we start with the homogeneous $\U(2)$ principal bundle
$\U(2)\to\SU(3)\to\CP^2$, the associated 2-sphere bundle is
$\SU(3)/\T^2\to\CP^2$ and $\SU(3)/Z\to\CP^2$ is  its $\SO(3)$
principal bundle. But $\SU(3)/Z\simeq\SU(3)/\diag(z,z,\bar
z^2)\simeq W_{1,1}$. Since $W_{1,1}$ is simply connected and $
H^4(W_{1,1},\Z) \cong \Z_3$, \eqref{SO3} and \eqref{SO3p1} imply that the
$\SO(3)$ bundle has  $p_1=-3$ and $w_2\ne 0$. Thus $\SU(3)/Z=Q_1 $
and hence $\SU(3)/\T^2=N_1 $.

\smallskip

Similarly, there is a second free biquotient action of $\U(2)$ on
$\SU(3)$, see \cite{E2}, given by
$$B\star A=\diag(1,1,\det B^2)\ A\; \diag(B,\det B)^{-1} \text{
where }  B\in\U(2) \; , \;A\in\SU(3).$$ By dividing by $\SU(2)$
first, one easily sees that $\SU(3)/\!/ \U(2)\simeq \CP^2$, i.e. we
obtain a $\U(2)$ principal bundle over $\CP^2$. Now
 $\SU(3)/\!/Z=\diag(1,1,z^4)\backslash
\SU(3)/\diag(z,z,z^2)^{-1}= F_{1,1}$.  Since $F_{1,1}$ is simply connected and $ H^4(F_{1,1},\Z) \cong \Z_5$, \eqref{SO3}
and \eqref{SO3p1} imply that the $\SO(3)$ bundle has  $p_1=5$. Thus
$\SU(3)/Z=Q_{-1} $ and hence $\SU(3)/\T^2=N_{-1} $.

Thus in both cases $P_{\pm}=\SU(3)$, but with different actions by
$\U(2)$. Dividing by $\S^1_{a,b}$, the last claim follows from
\pref{same}.
\end{proof}

\begin{rem*}
The diffeomorphism classification of $M^1_{a,b}$ was carried out in
\cite{KS2}. Their choice of parameters $a,b$ is the same as ours,
but their  orientation is opposite. Notice though that in the choice
of $m,n$, one needs to change the sign of $n$, i.e. $am+bn=1$ in the
formulas in \pref{KSCircle}.

The diffeomorphism classification of $M^{-1}_{a,b}$ was carried out
in \cite{AMP1}. The correspondence  of our and their parameters is
$a=l,\ b=m,\ m=a,\ n=b$, and their  orientation is opposite. Notice
though that the invariants of their Examples 7-9 in the Table on
page 47 are incorrect.
\end{rem*}

The case of $t=0$ is also special:
\begin{prop}\label{t=0}
For $t=0$ we have $N_0=\CP^3 \, \# \, \overline{\CP^3}$ and
$P_0=\Sph^5 \times \Sph^3$. Furthermore,
$M^0_{a,b}=\Sph^5 \times \Sph^3/\S^1$ with circle action $(p,q)\to
(z^{a+b}p \ ,\diag(z^a, {z}^b)\, q )$ where $p\in\C^3\, , q\in\C^2$.
\end{prop}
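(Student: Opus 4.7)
The plan is to identify $P_0$ explicitly as a $\U(2)$-principal bundle, and then to read off $N_0$ and $M^0_{a,b}$ by passing to appropriate quotients.

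For the first step I would consider the candidate action of $\U(2)$ on $\Sph^5\times\Sph^3\subset\C^3\times\C^2$ given by $A\cdot(p,q)=(\det(A)\,p,Aq)$. If $(p,q)$ is fixed by $A$, then $\det(A)\,p=p$ forces $\det(A)=1$, so $A\in\SU(2)$; the restricted action on $\Sph^3$ is then standard left multiplication, which is free and transitive via $\SU(2)\cong\Sph^3$, forcing $A=e$. Hence the action is free with quotient $\Sph^5/\S^1=\CP^2$, making $\Sph^5\times\Sph^3$ a principal $\U(2)$-bundle over $\CP^2$. By the classification of such bundles by $(c_1,c_2)$ recalled in Section~1, it suffices to match the Chern classes with those of $P_0$.

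To compute these, I would observe that $f\colon(p,q)\mapsto q$ is $\U(2)$-equivariant into the standard representation on $\C^2$, hence defines a nowhere-zero section of the associated rank-two complex vector bundle $E$. This splits $E$ as $\underline\C\oplus L$ for a line bundle $L$, so $c_2(E)=0$ and $c_1(E)=c_1(L)=c_1(\det E)$. The determinant bundle corresponds to the $\S^1=\U(2)/\SU(2)$-quotient, and $(\Sph^5\times\Sph^3)/\SU(2)=\Sph^5$ (since $\SU(2)$ acts freely and transitively on $\Sph^3$) with residual circle action $z\cdot p=zp$. This is the Hopf bundle, whose Euler class is $x$ by the convention of Section~1; hence $E\cong \mathcal{O}(1)\oplus\mathcal{O}$, $c_1(P_0)=x$, and $c_2(P_0)=0$, matching the defining Chern classes. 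Thus $P_0\cong\Sph^5\times\Sph^3$ with the specified action.

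For $N_0=P_0/\T^2\cong P(E)\cong P(\mathcal{O}(1)\oplus\mathcal{O})$, I would invoke the classical fact that the projectivization $P(\mathcal{O}_{\CP^n}(1)\oplus\mathcal{O}_{\CP^n})$ is diffeomorphic to the blowup of $\CP^{n+1}$ at a point, which is topologically $\CP^{n+1}\#\overline{\CP^{n+1}}$; taking $n=2$ yields $N_0\cong \CP^3\#\overline{\CP^3}$. Finally, the description of $M^0_{a,b}$ follows immediately from \pref{same}: substituting the action above with $A=\diag(z^a,z^b)\in\S^1_{a,b}$ gives $(p,q)\mapsto(\det(\diag(z^a,z^b))\,p,\diag(z^a,z^b)\,q)=(z^{a+b}p,\diag(z^a,z^b)\,q)$, as claimed. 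The main obstacle is the middle identification $P(\mathcal{O}(1)\oplus\mathcal{O})\cong \CP^3\#\overline{\CP^3}$; either one cites the Hirzebruch description, or one argues explicitly by blowing down the $(-1)$-section coming from the $\mathcal{O}$-summand to obtain a smooth point on $\CP^3$.
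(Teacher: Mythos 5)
Your proposal is correct, but it identifies $P_0$ by a genuinely different route than the paper. The paper works forwards: from $c_1=x,\ c_2=0$ it reduces the structure group to $\U(1)$, recognizes the reduced circle bundle as the Hopf bundle, writes $P_0=\Sph^5\times_{\U(1)}\U(2)$, shows $(p,A)\mapsto[(p,A)]$ is a diffeomorphism onto $P_0$ from $\Sph^5\times\SU(2)$, and then translates the principal $\U(2)$-action through this identification; the restricted $\S^1_{a,b}$-action comes out as $(p,(u,v))\mapsto(z^{a+b}p,(\bar z^{\,b}u,z^a v))$ and a final conjugation-plus-coordinate-swap is needed to match the action stated in the proposition. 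You work backwards: you posit the action on $\Sph^5\times\Sph^3$ directly, verify freeness, produce a nowhere-zero section of the associated $\C^2$-bundle to get $c_2=0$, read off $c_1=x$ from the $\SU(2)$-quotient via \eqref{cherncircle}, and invoke the classification of $\U(2)$-bundles over $\CP^2$ by $(c_1,c_2)$ quoted in Section 1; the stated circle action then appears with no further normalization, and \pref{same} finishes the proof. For $N_0$ both arguments quote essentially the same classical fact, you in the form $P(\mathcal{O}(1)\oplus\mathcal{O})\cong\CP^3\#\overline{\CP^3}$, the paper as $\Sph^5\times_{\S^1}\Sph^2\cong\CP^3\#\overline{\CP^3}$. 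Your approach buys a cleaner derivation of the action (no translation bookkeeping), at the cost of leaning on the classification theorem and on careful Chern-class conventions; the paper's is constructive and makes the reduction to the Hopf bundle explicit.

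One point deserves care: the formula $A\cdot(p,q)=(\det A\,p,\,Aq)$ is a \emph{left} action of the nonabelian group $\U(2)$, so before speaking of ``the'' principal bundle and its Chern classes you must fix a conversion to a right action. With the naive conversion $x\cdot B:=B^{-1}\cdot x$ the determinant quotient carries the \emph{inverse} Hopf action, and one lands on the bundle with $(c_1,c_2)=(-x,0)$, i.e.\ the conjugate of $P_0$; with the right action $(p,q)\cdot B:=(\det B\,p,\,B^{T}q)$, which has the same orbits and restricts to the same circle action on $\S^1_{a,b}$, one gets $(c_1,c_2)=(x,0)=P_0$ exactly as you claim. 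Either way your conclusion is unaffected, since complex conjugation of $\U(2)$ preserves each subgroup $\S^1_{a,b}$ and $\T^2$, so the quotients $N_0$ and $M^0_{a,b}$ are the same for $P_0$ and its conjugate; but the sentence ``matching the defining Chern classes'' should be accompanied by a choice of right-action convention to be airtight.
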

\begin{proof}
The bundle  $P_0$ has $c_1=x$ and $c_2=0$ and its structure group
thus reduces to $\U(1)$. This reduced circle bundle must  be the
Hopf bundle since its Euler class is $x$. Hence
$N_0=\Sph^5\times_{\S^1}\Sph^2  $ which is well known to be
diffeomorphic to $\CP^3\#\overline{\CP^3}$.

Furthermore, $P_0=\Sph^5\times_{\U(1)}\U(2)$ where $\U(1)$ acts on
$\Sph^5$ as the Hopf action and on $\U(2)$ by left multiplication
with $\diag(z,1)$. One easily shows that the map
$\Sph^5\times\Sph^3=\Sph^5\times\SU(2)\to
\Sph^5\times_{\U(1)}\U(2)=P_0 $ given by $(p,A)\to [(p,A)]$ is a
diffeomorphism. This easily implies that the action of $\U(2)$ on
$P_0$,  translated to $\Sph^5\times\Sph^3$, is given by
  $B\star (p,A)\to (\det B\, p\, , \diag(\det\bar{B},1)\, A\, B)$
where $p\in\Sph^5\subset \C^3\, , A\in\SU(2)\simeq\Sph^3$ and
$B\in\U(2)$. If we identify  $\SU(2)$ with $\Sph^3$ via its first
column vector, we see that the circle action of $B=\diag(z^a,z^b)\in
\S^1_{a,b}\subset\U(2)$ on $P_0=\Sph^5\times\Sph^3$  sends
$(p,(u,v))$ to $(z^{a+b}p,(\bar{z}^b \, u,z^a\, v))$, where
$(u,v)\in\Sph^3\subset \C^2$. But the circle action $(u,v)\to
(\bar{z}^b \, u,z^a\, v)$ on $\Sph^3$ is equivalent to $(u,v)\to
({z}^a \, u,z^b\, v)$ via conjugation on the first coordinate and a
coordinate interchange. Since $M^0_{a,b}=P_0/\S^1_{a,\, b }$,  we
obtain the last claim.
\end{proof}

\begin{rem*}
Thus $M^0_{a,b}$ are special examples of the 5-parameter family of
manifolds mentioned at the beginning of Section 6. Together with
\cref{MandL} this implies that $M^0_{1,1}\simeq L_{-1,2}\simeq \bar
M_{-1,2}$.
\end{rem*}

 We can regard the principal bundle $Q_t$ as $P_t/Z$ where $Z=\S^1_{1,1}$  is the
center of $\U(2)$ and thus \pref{same} implies
\begin{cor}
The $\SO(3)$ principal bundle $Q_t$ with $p_1(Q_t)=1-4t\; ,
w_2(Q_t)\ne 0$ is equal to $M^t_{1,\; 1}$. Furthermore,
$Q_1=W_{1,1}$ and $Q_{-1}=F_{1,1}$.
\end{cor}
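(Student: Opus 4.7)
The plan is to assemble the corollary directly from the earlier results in this section with essentially no new work. The key observation is that in the construction of $N_t$ we already defined $Q_t=P_t/Z$, where $Z\subset\U(2)$ is the center, and the center is precisely the diagonal circle $Z=\diag(z,z)=\S^1_{1,1}$ in the notation of \pref{same}.

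First, I would apply \pref{same} with $a=b=1$. This gives
\[
M^t_{1,1}\;=\;P_t/\S^1_{1,1}\;=\;P_t/Z\;=\;Q_t,
\]
establishing the first assertion. The Pontryagin class and Stiefel-Whitney class of $Q_t$ were already computed when $Q_t$ was introduced in \sref{circle} (they agree with \eqref{chern} via $c_1(P_t)=x$, $c_2(P_t)=tx^2$), so no further verification is required.

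For the two special cases, I would invoke \pref{t=pm 1}, which identifies $M^1_{a,b}=W_{a,b}$ and $M^{-1}_{a,b}=F_{a,b}$ for all $a,b$. Specializing to $a=b=1$ yields
\[
Q_1=M^1_{1,1}=W_{1,1}\qquad\text{and}\qquad Q_{-1}=M^{-1}_{1,1}=F_{1,1}.
\]
Alternatively, one can read this off the proof of \pref{t=pm 1} directly: there $P_1=\SU(3)$ with the standard left $\U(2)$-action, so $Q_1=\SU(3)/Z=\SU(3)/\diag(z,z,\bar z^2)=W_{1,1}$, and $P_{-1}=\SU(3)$ with the biquotient $\U(2)$-action, so $Q_{-1}=\SU(3)/\!/Z=\diag(1,1,z^4)\backslash\SU(3)/\diag(z,z,z^2)^{-1}=F_{1,1}$.

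Since everything is a direct specialization of results already proved, there is no real obstacle; the only point that needed to be set up carefully is the identification $Z=\S^1_{1,1}$, which is immediate from the embedding $\T^2=\diag(e^{i\theta},e^{i\psi})\subset\U(2)$ used throughout \sref{circle}.
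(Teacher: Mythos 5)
Your proposal is correct and follows essentially the same route as the paper: the paper's own justification is precisely the observation $Q_t=P_t/Z$ with $Z=\S^1_{1,1}$ combined with \pref{same} (applied with $a=b=1$, which is allowed since $\gcd(1,1)=1$), and the identifications $Q_1=W_{1,1}$, $Q_{-1}=F_{1,1}$ are exactly those established in (the proof of) \pref{t=pm 1}. Nothing is missing.
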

\smallskip

\bigskip

\begin{center}
{\it Natural diffeomorphisms between sphere and circle bundles.}
\end{center}

\bigskip

From the inclusions $\diag(z^{p},z^{q})\subset
 \U(2) $ we obtain
the  fibration:
$$ \Sph^3/\Z_{p+q}\simeq \U(2)/\diag(z^{p},z^{q})  \to P_t/\diag(z^{p},z^{q}) \simeq M^t_{p,q} \to P_t/\U(2)\simeq
\CP^2,$$
 where the fiber is
$\U(2)/\diag(z^{p},z^{q})=\SU(2)/\diag(z^{p},z^{q})$ with
$z^{p+q}=1$.  Hence the fiber is a  lens space $\Sph^3/\Z_{p+q}$,
and, in the case of $p+q=\pm 1$, we obtain a bundle with fiber
$\Sph^3$. We can assume that $p+q= 1$, since replacing $z$ by
$\bar{z}$ changes the sign of both $p$ and $q$. Thus we obtain sphere bundles:
\begin{equation}\label{bundle}
\Sph^3\to M^t_{p,1-p} \to
\CP^2.
\end{equation}
We now identify which sphere bundle this is.
\begin{prop}\label{diff}
 $M^t_{p,1-p}$ is orientation preserving diffeomorphic to $S_{-t,p(p-1)}$.
\end{prop}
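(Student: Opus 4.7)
The plan is to identify $M^t_{p,1-p}$ with the unit sphere bundle of an explicit rank-$2$ complex vector bundle over $\CP^2$, compute its Chern classes, and match with the classification \eqref{SO4nonspin} of $\Sph^3$-bundles over $\CP^2$ with $w_2 \ne 0$.

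First, I would introduce the twisted unitary representation $\rho\colon\U(2)\to\U(2)$, $\rho(A) = (\det A)^{-p}\,A$. A direct computation shows $\rho(\diag(z^p,z^{1-p})) = \diag(1,z^{1-2p})$, so the $\rho$-stabilizer of $e_1\in\C^2$ is exactly $\S^1_{p,1-p}$, while the $\U(2)$-orbit of $e_1$ is all of $\Sph^3\subset\C^2$. Consequently $\Sph^3\cong \U(2)/\S^1_{p,1-p}$ as $\U(2)$-spaces, which identifies
\[ M^t_{p,1-p} \;=\; P_t/\S^1_{p,1-p} \;\cong\; P_t\times_\rho \Sph^3 \]
with the unit sphere bundle of the associated rank-$2$ complex vector bundle $E_p := P_t\times_\rho\C^2$.

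Since $\rho = \det^{-p}\otimes \operatorname{id}$ as representations, $E_p \cong L^{-p}\otimes E$, where $E$ is the standard rank-$2$ bundle of $P_t$ (with $c_1(E)=x$, $c_2(E)=tx^2$) and $L=\det E$. The tensor-product formula gives $c_1(E_p) = (1-2p)\,x$ and $c_2(E_p) = \bigl(t+p(p-1)\bigr) x^2$. Applying \eqref{chern}, the two $\SO(3)$-bundles $P_\pm$ associated to the underlying $\SO(4)$-bundle satisfy
\[ p_1(P_+) = c_1(E_p)^2 = \bigl(4p(p-1)+1\bigr) x^2, \qquad p_1(P_-) = c_1(E_p)^2 - 4c_2(E_p) = (1-4t)\,x^2, \]
together with $w_2(P_\pm)\equiv c_1(E_p)\not\equiv 0$. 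Matching with the classification \eqref{SO4nonspin}, which prescribes $p_1(P_-)=(4a+1)x^2$ and $p_1(P_+)=(4b+1)x^2$ for $S_{a,b}$, one reads off $a=-t$ and $b=p(p-1)$, hence $M^t_{p,1-p}\cong S_{-t,p(p-1)}$ as $\Sph^3$-bundles.

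The remaining point is an orientation check: the convention on $M^t_{p,1-p}$ from Section~\ref{circle} is the boundary orientation of the complex disk bundle over $N_t$ with $c_1 = e = p\,x + y$, while the orientation on $S_{-t,p(p-1)}$ is the boundary orientation of the complex disk bundle $D(E_p)$ over $\CP^2$. Both bounding manifolds are complex with their standard orientations, so it suffices to check agreement on a $\CP^1$-fiber of $N_t\to\CP^2$: there $e\big|_{\CP^1} = y\big|_{\CP^1}$ is the positive generator of $H^2(\CP^1;\Z)$, so both orientations restrict to the boundary orientation of the Hopf bundle $\Sph^3\to\CP^1$. The main substantive step is the identification via the representation $\rho$; the Chern class and orientation computations are then routine.
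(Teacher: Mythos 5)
Your identification of $M^t_{p,1-p}$ with the unit sphere bundle of $E_p=P_t\times_{\rho_p}\C^2$ is exactly the paper's starting point (same twisted representation $\rho_p(A)=(\det A)^{-p}A$, same appeal to \pref{same} and to the fact that the $\rho_p$-stabilizer of $e_1$ is precisely $\S^1_{p,1-p}$), but from there you take a genuinely cleaner route for the characteristic classes: the paper identifies the two associated $\SO(3)$ principal bundles geometrically, showing $P^*_-=P_t/Z=Q_t$ and reducing $P^*_+$ to the circle bundle $(P_t/\SU(2))/\Z_{2p-1}$ with a Gysin argument, whereas you simply write $E_p\cong(\det E)^{-p}\otimes E$, read off $c_1(E_p)=(1-2p)x$ and $c_2(E_p)=(t+p(p-1))x^2$ from the tensor-product formula, and apply \eqref{chern}. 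Both computations yield $p_1(P_-)=(1-4t)x^2$ and $p_1(P_+)=(1-2p)^2x^2$, hence $\{a,b\}=\{-t,\,p(p-1)\}$; your version buys a shorter and more transparent argument for this unoriented identification.

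The orientation check, however, is where your argument has a genuine gap, and it is exactly the delicate point: distinguishing $S_{-t,p(p-1)}$ from $S_{p(p-1),-t}$ \emph{is} the orientation statement, since these two are orientation-reversing diffeomorphic. Your fiberwise claim contains a sign error: because $e|_{\CP^1}=y|_{\CP^1}$ is the \emph{positive} generator, the circle bundle $M^t_{p,1-p}\to N_t$ restricted to a fiber $\CP^1$ is the unit circle bundle of $\mathcal{O}(1)$, not of the Hopf (tautological) bundle $\mathcal{O}(-1)$, and its complex boundary orientation is the \emph{reverse} of the standard orientation of $\Sph^3=\partial D(\C^2)$ carried by the fibers of $\partial D(E_p)\to\CP^2$. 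Indeed, inside $E_p$ the circle bundle over $N_t=P(E_p)$ is the unit circle bundle of the tautological line bundle, whose first Chern class is $-(px+y)=-e$; so the two ``complex boundary orientations'' you compare are in fact opposite. Moreover you never pin down which orientation convention defines $S_{-t,p(p-1)}$: with the convention actually used in the Kreck--Stolz and linking-form computations of Section 3 (fiber oriented so that the Euler class is $(a-b)x^2$), the orientation of $S_{-t,p(p-1)}$ is likewise the reverse of the complex orientation of $S(E_p)$, since $(-t)-p(p-1)=-c_2(E_p)$; your two sign slips therefore cancel and you land on the correct statement, but by accident rather than by proof. The robust way, and the paper's way, is to compare the orientation-sensitive linking forms: by \cref{linkingS1} with $(a,b)=(p,1-p)$ and $(m,n)=(1,-1)$ one finds $lk(M^t_{p,1-p})=-1/(t+p(p-1))$, which agrees with $lk(S_{-t,p(p-1)})=1/(-t-p(p-1))$ from \cref{linkingS3} and not with the linking form of $S_{p(p-1),-t}$, so the diffeomorphism is orientation preserving. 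If you prefer a direct geometric orientation argument, you must carry out the fiberwise comparison with the corrected sign and verify compatibility with the disk-bundle orientations used in Propositions \ref{KSsphere} and \ref{KSCircle}, since those are the conventions against which the proposition is later applied.
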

\begin{proof}
 We compute the characteristic classes of the $\SO(4)$
principal bundle $P^*$ over $\CP^2$ associated to the sphere bundle
\eqref{bundle}. For this we  identify the induced $\SO(3)$ principal
bundles $P_\pm^*=P^*/\S^3_\pm$ as discussed  in Section 1. Notice
though that the role of $P$ and $P^*$ are interchanged here.

 Let $\rho_p\colon
\U(2)\to \U(2)$ be the homomorphism $\rho_p(A)=(\det A)^{-p}A$ which
we can also regard as  a representation of $\U(2)$ on $\C^2$. The
induced action of $\U(2)$ on the unit sphere $\Sph^3(1)\subset
\R^4\cong \C^2 $ is transitive with isotropy group at $(1,0)$ given
by $\S^1_{p}:=\diag(z^p,z^{1-p})$, i.e., $\Sph^3(1)=\U(2)/\S^1_p $.
Consider now the associated vector bundle $E_p=P_t\times_{\U(2)}
\C^2$ where $\U(2)$ acts on $\C^2$ via $\rho_p$ and
$E_p^\R=P\times_{\U(2)} \R^4$ the underlying real bundle. For the
sphere bundle of $E_p^{\R}$ we have $S(E_p^{\R})=P\times_{\U(2)}
\Sph^3=P\times_{\U(2)} \U(2)/\S^1_p=P/\S^1_p$ and thus, using \pref{same},
$S(E_p^{\R})=M^t_{p,1-p}$. We denote by $P^*$ the $\SO(4)$ principal bundle associated to
$E_p^\R$, i.e.  $P^*=P\times_{\U(2)} \SO(4)$.

\smallskip

In order to view $P^*$ in a different way, consider  the following commutative diagram of
 homomorphisms:

\begin{equation*}
\begin{split}
\xymatrix{ \S^1\times\S^3 \ar[d]_{\pi_1}\ar[r]^{\tilde{\rho}_p}&
\S^1\times\S^3 \ar[d]_{\pi_1} \ar[r]^{\tilde{\gs}}&
\S^3\times\S^3 \ar[d]^{\pi_2}  \\
\U(2) \ar[r]^{\rho_p} &  \U(2)  \ar[r]^{\gs} & \SO(4)  }
\end{split}
\end{equation*}
 where $\pi_i$ are the
 two fold covers and $\gs,\  \tilde{\gs}$  the embeddings discussed in Section 1. To make
 these diagrams commutative, one needs
$\tilde{\rho}_p(z,q)=(z^{-2p+1},q)\in \S^1\times\S^3$.

\smallskip

Thus $P^* = P\times_{\U(2)}
\SO(4)=P\times_{(\S^1\times\S^3)/\Gamma}(\S^3\times\S^3)/\Gamma$ and
$(z,q)\in\S^1\times\S^3$ acts on $(q_1,q_2)\in\S^3\times\S^3$ as
$(z^{-2p+1}q_1,qq_2)$.  According to \eqref{chernpm}
$$P^*_-= P/Z=
Q_t$$ and thus  $p_1(P_{-}^*) =p_1(Q_t)=
(1-4t)$. On the other hand, by \eqref{chernpm}
$$P_+^*= (P/\SU(2))\times_{\SO(2)}\SO(3) =
(P/{\SU(2)})\times_{(\S^1/\Gamma)\times\{e\}}\left[(\S^3/\Gamma)\times\{e\}\right].
$$
 but $\S^1$ acts on $\S^3$ via $z\star q = z^{-2p+1}q$.
 The structure group of this bundle reduces to $\S^1$ with $\S^1$ principal bundle
 \begin{equation}\label{reduced}
 \S^1\to (P/{\SU(2)})\times_{(\S^1/\Gamma)\times\{e\}}\left[(\S^1/\Gamma)\times\{e\}\right]=
 (P/{\SU(2)})/\Z_{2p-1} \to P/\U(2)
 \end{equation}
 since the action of $\S^1$ on $\S^1$ has isotropy $\Z_{2p-1}$.
Recall that by \eqref{cherncircle}  the circle bundle $P/\SU(2)\to P/\U(2)$ has Euler class
$c_1(P)=x$. The Gysin sequence then implies that the circle bundle \eqref{reduced} has Euler
class $e=\pm (2p-1)x$ and hence
$p_1(P_+^*)=e^2=(1-2p)^2$. By definition, $p_1(P_-^*)=4a+1$ and
$p_1(P_+^*)=4b+1$,  and hence $a=-t$ and $b=p(p-1)$, i.e.,
 $M^t_{p,1-p} = S_{-t,p(p-1)}\,$.

In order to see that the diffeomorphism is orientation preserving, we use \cref{linkingS3} and
\cref{linkingS1} to see that the linking forms are the same.
\end{proof}

In particular:
\begin{cor}\label{sameAW} For each integer $p$, there exist natural
diffeomorphisms $W_{p,1-p} \simeq S_{-1,p(p-1)}\,$ and $F_{p,1-p} \simeq
S_{1,p(p-1)}$.
\end{cor}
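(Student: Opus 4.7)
The plan is to deduce this directly from Proposition~\ref{diff} together with the identifications recorded in Proposition~\ref{t=pm 1}, specialized to $t=\pm 1$.

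First I would set $t=1$ in Proposition~\ref{diff}, which gives an orientation preserving diffeomorphism $M^{1}_{p,1-p}\simeq S_{-1,p(p-1)}$. By Proposition~\ref{t=pm 1}, the circle bundle $M^{1}_{a,b}$ over the flag manifold $N_{1}=\SU(3)/\T^2$ coincides with the Aloff--Wallach space $W_{a,b}$. Substituting $(a,b)=(p,1-p)$ yields $W_{p,1-p}\simeq S_{-1,p(p-1)}$.

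Next I would set $t=-1$. Proposition~\ref{diff} gives $M^{-1}_{p,1-p}\simeq S_{1,p(p-1)}$, and Proposition~\ref{t=pm 1} identifies $M^{-1}_{a,b}$ with the Eschenburg biquotient $F_{a,b}$ based on the inhomogeneous flag $N_{-1}=\SU(3)/\!/\T^2$. Setting $(a,b)=(p,1-p)$ then gives the second diffeomorphism $F_{p,1-p}\simeq S_{1,p(p-1)}$.

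There is no real obstacle here beyond bookkeeping: the hard work was carried out in Proposition~\ref{diff} (identifying the characteristic classes of the $\SO(4)$ principal bundle associated to $M^{t}_{p,1-p}$ via its two $\SO(3)$ reductions $P_{\pm}^*$, and matching linking forms to ensure the diffeomorphism is orientation preserving) and in Proposition~\ref{t=pm 1} (recognizing $\SU(3)$ as $P_{\pm 1}$ in two different ways via the standard and biquotient $\U(2)$-actions). Once those are in hand, the corollary is a specialization.
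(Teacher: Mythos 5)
Your proposal is correct and matches the paper exactly: the corollary is stated there as an immediate specialization ("In particular:") of Proposition~\ref{diff}, which gives $M^t_{p,1-p}\simeq S_{-t,p(p-1)}$, combined with the identifications $M^1_{a,b}=W_{a,b}$ and $M^{-1}_{a,b}=F_{a,b}$ from Proposition~\ref{t=pm 1}, evaluated at $t=\pm 1$. Nothing further is needed, since $\gcd(p,1-p)=1$ automatically, so the specialization is legitimate.
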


\begin{rems*}
(a) Using the normalization of the Aloff-Wallach spaces described in
Section 1, we see that $W_{p,1}\simeq S_{-1,p(p+1)}$ with $p\ge 0$
are the Aloff-Wallach spaces which are naturally  $\Sph^3$ bundles
over $\CP^2$. They have positive curvature, unless $p=0$. Using the
diffeomorphism classification in \cref{hdclass}, this gives rise to
infinitely many diffeomorphisms between sphere bundles and
$W_{p,1}$.

(b) The
tangent bundle of $\CP^2$ is given by $S_{-1,2}\simeq W_{2,-1}\simeq
W_{1,1}$ since it has $p_1=e=3$. Thus $W_{1,1}$ is both an $\S^3$
bundle over $\CP^2$,  as well as an $\SO(3)$ principal bundle over $\CP^2$.

(c) The Eschenburg spaces $F_{p,1-p}$ do not have positive curvature
in the Eschenburg metric since $p(1-p)\le 0$.
\end{rems*}

\bigskip

\section{Spin $\Sph^3$ and $\Sph^1$ bundles over $\CP^2$}

\bigskip

In this section we discuss bundles which are spin. One class of such
bundles has a total space which is spin and can hence be
diffeomorphic to an Eschenburg space. Some of the other classes
admit Einstein metrics and we will compare them as well.

\smallskip

There is a specific class of manifolds of type $\bar{E}_r$ which has been considered previously in
 \cite{WZ}, where it was shown to admit Einstein metrics. These manifolds are total spaces
 of circle bundles $\S^1\to L_{a,b}\to
 \CP^2\times\CP^1$
with Euler class $e=ax+by$ and $\gcd(a,b)=1$, where $x$ and $y$ are
the natural generators in the first or second factor. They can also
be considered as the base space of a circle bundle: $\S^1\to
\Sph^5\times\Sph^3\to L_{a,b}$ where $\S^1$ acts on
$\Sph^5\times\Sph^3\subset \C^3\oplus \C^2$ as $(u,v)\to (z^{-b} u,
z^{a}v)$. Thus $L_{1,0}= \Sph^5 \times\CP^1  $ and $L_{0,1}=
\CP^2\times\Sph^3$, and by conjugating in each component, one sees
that $L_{a,b}=L_{\pm a,\pm b}$. Furthermore,  $L_{a,b}$, $a\ne 0$,
has cohomology type $E_{a^2}$ if $b$ is even and cohomology type
$\bar{E}_{a^2}$ if $b$ is odd.   Projecting to the first component,
one obtains a lens space bundle $\Sph^3/\Z_{|b|}\to L_{a,b} \to
\CP^2$. Thus $L_{a,1}$ is naturally an $\Sph^3$ bundle over $\CP^2$.
See \cite{WZ} for details.
\smallskip

These manifolds were later also discussed in \cite{KS1}, where they
computed the Kreck-Stolz invariants and exhibited certain
diffeomorphism among them, which gave rise to counter examples to a
conjecture by W.Y.Hsiang.

\smallskip

In \cite{Kr1} the manifolds $L_{a,b}$ were generalized to a $5$
parameter family of manifolds by dividing $\Sph^5\times\Sph^3$ by
the $\S^1$ action $((u_1,u_2,u_3),(v_1,v_2))\to
 ((z^{a_1} u_2,z^{a_2} u_3,z^{a_3}), (z^{b_1} v_1,  z^{b_2} v_2))$, and in  \cite{Es},  their Kreck-Stolz invariants were
 computed if all $a_i$ are equal. These manifolds have cohomology type
  ${E}_r$ or  $\bar{E}_r$, depending on whether $\sum a_i+\sum b_i$ is even or odd, and  $r=b_1b_2$.

\bigskip

\subsection{3 sphere bundles which are spin}

Here we consider the bundles
$$\Sph^3\to \bar{S}_{a,\, b } \to \CP^2 \text{ with } p_1(\bar{S}_{a,\, b })=2a+2b \ , \  e(\bar{S}_{a,\, b })
=a-b \text{ and } w_2=0$$
Recall that, according to \eqref{SO4spin}, these are the allowed values in the spin case.
As in Section 3, one easily proves:

\begin{prop}\label{topspin}  If $r=|a-b|\ge 1$ one has the
following:
\begin{itemize}
\item[(a)] The manifolds $\bar{S}_{a,\, b }$  have cohomology type $\bar{E}_r$
 and $p_1(T\bar{S}_{a,\, b })\equiv 2a+2b+3 \text{ mod } r$.
 Furthermore,  the linking form  is given by $lk(\bar{S}_{a,\, b
 })=\frac{1}{a-b} $.
 \item[(b)] $\bar{S}_{a,\, b }$ has non-negative curvature if $a$ and $b$ are both even.
\end{itemize}
\end{prop}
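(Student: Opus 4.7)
The plan is to handle part~(a) as a direct adaptation of the non-spin computations in \sref{3sphere} and part~(b) via a lift of the structure group to $\Spin(4)$.

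For part~(a), the Gysin sequence of the $\Sph^3$-fibration gives $H^2(\bar S_{a,b};\Z)\cong \Z$ generated by $u=\pi^*(x)$ and, when $a\ne b$, $H^4(\bar S_{a,b};\Z)\cong \Z_r$ with $r=|a-b|$, generated by $u^2=\pi^*(x^2)$. Using the splitting $T\bar S_{a,b}\cong \pi^*(T\CP^2)\oplus V$ with $V\oplus\operatorname{Id}\cong\pi^*E$, where $E$ is the underlying rank-$4$ vector bundle, a direct computation yields
\[
p_1(T\bar S_{a,b})=\pi^*\!\bigl(p_1(T\CP^2)+p_1(E)\bigr)=(3+2a+2b)\,u^2\pmod r.
\]
The Stiefel–Whitney calculation is the only point where the spin case diverges from \pref{3top}: here $w_2(E)=0$ but $w_2(T\CP^2)=x\ne 0$, so $w_2(T\bar S_{a,b})=\pi^*(x)=u\ne 0$. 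Hence $\bar S_{a,b}$ is non-spin, confirming cohomology type $\bar E_r$.

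For the linking form I would repeat the disk-bundle argument of \cref{linkingS3} verbatim, taking $\bar\pi\colon W\to\CP^2$ the disk bundle of $E$, $z=\bar\pi^*(x)$, and Thom class $U\in H^4(W,\partial W)$ with $j^*(U)=e=(a-b)z^2$. Since that computation uses only the Euler class and the Thom isomorphism, and the Pontryagin class does not enter, the conclusion
\[
z^4=\frac{1}{a-b}\,\langle z^2\cup U,[W,\partial W]\rangle=\frac{1}{a-b}
\]
goes through unchanged, yielding $lk(\bar S_{a,b})\equiv 1/(a-b)\in\Q/\Z$.

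For part~(b), the Grove–Ziller theorem quoted in \sref{3sphere} applies only when $w_2\ne 0$, so a different realization is needed. The plan is: when $a=2a'$ and $b=2b'$ are both even, lift the $\SO(4)$ principal bundle $P$ associated to $\bar S_{a,b}$ to a $\Spin(4)=\Sph^3\times\Sph^3$ principal bundle $\tilde P$; this is possible since $\CP^2$ is simply connected and $w_2(P)=0$. By \eqref{ppm}, the two associated $\SO(3)$ quotients $P_\pm$ have $p_1=4a,4b$, so each $\Sph^3$-factor of $\tilde P$ is an $\SU(2)$ bundle whose second Chern class is an integer (namely $a$, respectively $b$), which is precisely the condition needed to realize each factor as a quotient of a $\U(2)$ principal bundle of the type used in \sref{circle} and \cite{GZ2}. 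Applying the $\U(2)$-invariant non-negatively curved metric from \cite{GZ2} on each factor of $\tilde P$ and passing to the associated $\Sph^3$-bundle $\bar S_{a,b}=\tilde P\times_{\Spin(4)}\Sph^3$, O'Neill's formula produces the desired metric. The main obstacle will be verifying that the parity assumption is exactly what allows both $\SU(2)$-factors to be simultaneously realized in this Lie-theoretic form and that the resulting submersion metric is well-defined; the computation is parallel to \pref{same} and \cref{K >= 0} but requires the spin (i.e.\ $c_1\equiv 0 \pmod 2$) variant of the $\U(2)$-bundle construction.
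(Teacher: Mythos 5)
Your part (a) is correct and is exactly the routine adaptation of \pref{3top} and \cref{linkingS3} that the paper intends (``as in Section 3''): the Gysin sequence gives the ring structure, the splitting $T\bar S_{a,b}\cong\pi^*(T\CP^2)\oplus V$ gives $p_1\equiv(2a+2b+3)u^2$, and $w_2(T\bar S_{a,b})=\pi^*(w_2(T\CP^2))=u\ne 0$ shows the total space is non-spin. The only point worth making explicit in the linking form argument is that for the non-spin total space the disk bundle $W$ has $w_2(TW)=z\bmod 2$, which is condition \eqref{KSzc} with $c=0$ in the $\bar E_r$ case, so the identification of $lk$ with the relative characteristic number $z^4$ still applies; with that remark your computation $z^4=1/(a-b)$ goes through as claimed.

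Part (b), however, has a genuine gap. The paper's proof is the theorem of \cite{GZ2} that an $\SO(4)$ (equivalently, since $w_2=0$ and $\CP^2$ is simply connected, an $\Sph^3\times\Sph^3$) principal bundle over $\CP^2$ carries an invariant non-negatively curved metric provided both $p_1(P_\pm)=4a,\,4b$ are divisible by $8$, i.e.\ provided $a$ and $b$ are both even; non-negative curvature of $\bar S_{a,b}=P\times_{\SO(4)}\Sph^3$ then follows from O'Neill. Your reconstruction breaks at two points. First, you assert that each $\Sph^3$-factor having integral second Chern class is ``precisely the condition needed'' to apply the construction of \cite{GZ2} --- but every $\SU(2)$-bundle has integral $c_2$; the actual hypothesis in the spin case is a divisibility condition ($p_1\equiv 0\bmod 8$, i.e.\ evenness of $a$ and of $b$), which is exactly where the assumption of the proposition enters, and you defer this verification as ``the main obstacle'' without supplying it. Second, and more seriously, the metric construction cannot be done factor-by-factor: $\tilde P$ is the fiber product $\tilde P_-\times_{\CP^2}\tilde P_+$, not a Riemannian product, so invariant non-negatively curved metrics on $\tilde P_-$ and $\tilde P_+$ separately do not combine to give one on $\tilde P$ or on the associated sphere bundle (fiber products of non-negatively curved submersions need not be non-negatively curved). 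What \cite{GZ2} actually provides is an invariant metric on the total principal bundle itself, obtained from its cohomogeneity one structure lifted from the cohomogeneity one action on $\CP^2$, and it is to make the singular orbits of the lifted action have codimension two that the divisibility conditions on both $p_1(P_\pm)$ are needed simultaneously. Without that result (or an equivalent argument), the passage from the two $\Sph^3$-bundles to $\bar S_{a,b}$ is unjustified.
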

\no Part (b) follows from \cite{GZ2}. It is not known whether
$\bar{S}_{a,\, b }$ admits a metric with non-negative curvature if
$a$ and $b$ are not both even, although they do if $a$ is even and
$b=(2r+1)^2$.

\smallskip

For the Kreck-Stolz invariants one obtains:

\begin{prop}\label{KSspherenonspin}
The Kreck-Stolz invariants for $\bar{S}_{a,\, b}$ with $a\ne b$ are
given by:
$$
\begin{aligned}
s_1(\bar{S}_{a,\, b }) &  \equiv \frac{1}{2^7 \cdot 7 \cdot (a-b)}\,
( 2a+2b+3 )^2- \frac{1}{2^7 \cdot 3 \cdot (a-b)}\,
( 4a+4b+5 ) - \frac{\signs(a-b)}{2^5\cdot 7}  \mod 1 \\
s_2(\bar{S}_{a,\, b}) &  \equiv  \frac{-1}{2^2 \cdot 3 \cdot (a-b)}\,(a+b-1) \mod 1 \\
s_3(\bar{S}_{a,\, b }) &  \equiv \frac{-1}{2^2 \cdot (a-b)}\,(a+b-5) \mod 1.
\end{aligned}
$$
\end{prop}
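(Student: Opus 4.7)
The proof plan is essentially the spin analogue of Proposition \ref{KSsphere}, with the crucial twist that although the vector bundle $E$ over $\CP^2$ is now spin ($w_2(E)=0$), the total space $\bar S_{a,b}$ is \emph{non-spin} of type $\bar E_r$ (since $w_2(T\bar S_{a,b}) = \pi^*(w_2(T\CP^2))=u\ne 0$). Consequently, we must apply the non-spin family of formulas \eqref{KSnonspin} from Section \ref{KS}, not the spin ones.

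First I would take as bounding manifold the disk bundle $\bar\pi \colon W = W_{a,b} \to \CP^2$ of the associated rank-$4$ vector bundle $E$, identify $H^*(W,\Z)$ with $H^*(\CP^2,\Z)$ via $\bar\pi^*$, and set $z=\bar\pi^*(x)$, so that $z|\partial W = u$ generates $H^2(\bar S_{a,b};\Z)$. Since $E$ is spin, $w_2(TW) = \bar\pi^*(w_2(T\CP^2)+w_2(E)) = \bar\pi^*(x) = z$, so the non-spin congruence $w_2(TW)\equiv c+z \pmod 2$ in \eqref{KSzc} is satisfied by $c=0$. This is the key structural difference from Proposition \ref{KSsphere}, where one had $w_2(TW)=0$ and could choose $c=0$ against the spin condition instead.

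Next I would compute $p_1(TW) = \bar\pi^*(p_1(T\CP^2)+p_1(E)) = (3+2a+2b)\,z^2$. With the oriented disk bundle, the Thom class $U\in H^4(W,\partial W)$ satisfies $U\cap [W,\partial W]=[\CP^2]$ and $j^*(U)=e(E)=(a-b)\,z^2$, yielding
\[
z^4 = \tfrac{1}{a-b}, \qquad z^2 p_1 = \tfrac{2a+2b+3}{a-b}, \qquad p_1^2 = \tfrac{(2a+2b+3)^2}{a-b},
\]
while the signature form on $H^4(W,\partial W)\cong\Z$ is multiplication by $a-b$, giving $\sign(W)=\signs(a-b)$. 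Substituting $c=0$ into \eqref{KSnonspin} kills every mixed $zc$-term, and one is left with
\[
S_1 = -\tfrac{\signs(a-b)}{2^5\cdot 7} + \tfrac{(2a+2b+3)^2}{2^7\cdot 7(a-b)} - \tfrac{2a+2b+3}{2^6\cdot 3(a-b)} + \tfrac{1}{2^7\cdot 3(a-b)},
\]
\[
S_2 = -\tfrac{2a+2b+3}{2^3\cdot 3(a-b)} + \tfrac{5}{2^3\cdot 3(a-b)}, \qquad
S_3 = -\tfrac{2a+2b+3}{2^3(a-b)} + \tfrac{13}{2^3(a-b)}.
\]

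Finally I would combine the last two $S_1$ terms as $\tfrac{-2(2a+2b+3)+1}{2^7\cdot 3(a-b)} = -\tfrac{4a+4b+5}{2^7\cdot 3(a-b)}$, and likewise simplify the numerators of $S_2$ and $S_3$ to $-2(a+b-1)$ and $-2(a+b-5)$ respectively, which reduces them to the claimed form. I do not anticipate a serious obstacle: the only subtlety is correctly applying the non-spin formulas \eqref{KSnonspin} (which is where a routine-looking calculation could go wrong by a constant factor), so I would double-check the simplification of $S_1$ and the sign of $\sign(W)$ against the orientation convention used for the disk bundle.
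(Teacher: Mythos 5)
Your proposal is correct and is exactly the intended argument: the paper omits the proof of this proposition as the routine analogue of Proposition~\ref{KSsphere}, and your computation (disk bundle of the spin rank-$4$ bundle, $w_2(TW)=z$ so $c=0$ in the non-spin formulas, $j^*(U)=(a-b)z^2$, $p_1(TW)=(2a+2b+3)z^2$, $\sign(W)=\signs(a-b)$) reproduces the stated $s_1,s_2,s_3$ exactly, including the correct identification of the total space as type $\bar E_r$ and hence the use of \eqref{KSnonspin}. No gaps.
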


This easily implies the following homeomorphism and diffeomorphism
classification:

\begin{cor}\label{hdclass} The manifolds $\bar S_{a,b}$ and $\bar S_{a',b'}$ with $r=a-b=a'-b'>0$ are
\begin{itemize}
\item[(a)]
 orientation preserving  homeomorphic  if and only if  $a\equiv
 a' \mod 6\, r$.
 \item[(b)] orientation preserving diffeomorphic  if and
only if $$a\equiv a' \mod 6\, r  \ \text{ and } \ (a-a')\,[3 (a
 + a') - 3r +1]\equiv 0 \mod 2^3\cdot 3\cdot 7\cdot r.$$
\item[(a')]  orientation
reversing  homeomorphic  if and only if  $r=1$ and $a+a'\equiv 2
\mod 6$ or $r=2$ and $a+a'\equiv 3 \mod 12$.
 \item[(b')]  orientation  reversing diffeomorphic  if and
only if
$$
a+a'\equiv 2 \mod 6 \  \text{ and } \   a(3a-2)\equiv -a'(3a'-2) +2 \mod 2^3\cdot 3\cdot 7\,  \text{ for  } r=1
$$
or
$$
a+a'\equiv 3 \mod 12  \  \text{ and } \  a(3a-5)\equiv -a'(3a'-5) \mod 2^4\cdot 3\cdot 7\,   \text{ for  } r=2 \,.
$$
\end{itemize}
\end{cor}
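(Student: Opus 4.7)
The result will follow from Theorem \ref{KSdiffeo} together with the invariants computed in Proposition \ref{KSspherenonspin}, using Montagantirud's reformulation which allows the replacement of $s_3$ by the linking form $lk$ for manifolds of type $\bar E_r$. Setting $r = a-b = a'-b' > 0$, Proposition \ref{topspin}(a) gives
\[
 lk(\bar S_{a,b}) \equiv \tfrac{1}{r} \equiv lk(\bar S_{a',b'}) \pmod 1,
\]
so the linking forms already agree in the orientation-preserving comparison. In the orientation-reversing comparison the condition $lk \equiv -lk \pmod 1$ becomes $2/r \in \Z$, which is precisely why parts (a'), (b') are non-vacuous only for $r \in \{1,2\}$. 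Thus in each case the problem reduces to comparing only $s_1$ (or $28\,s_1$) and $s_2$.

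\textbf{Orientation-preserving case.} After substituting $b = a - r$ and $b' = a' - r$ into Proposition \ref{KSspherenonspin}, a direct computation gives
\[
 s_2(\bar S_{a,b}) - s_2(\bar S_{a',b'}) \equiv -\tfrac{a-a'}{6r} \pmod 1,
\]
so $s_2$ matches iff $a \equiv a' \pmod{6r}$. Expanding the squares in the $s_1$ formula and cancelling the $\signs(r)$ term, one similarly obtains
\[
 28\,s_1(\bar S_{a,b}) - 28\,s_1(\bar S_{a',b'}) \equiv \tfrac{(a-a')[3(a+a') - 3r + 1]}{6r} \pmod 1.
\]
When $a \equiv a' \pmod{6r}$ this is visibly an integer, so the $s_2$ congruence already forces $28\,s_1$ to match, proving (a). For (b) one divides through by $28$: the extra condition $s_1 \equiv s_1' \pmod 1$ reads $(a-a')[3(a+a')-3r+1] \equiv 0 \pmod{2^3\cdot 3\cdot 7\cdot r}$, which together with (a) gives exactly the stated congruences.

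\textbf{Orientation-reversing case and main obstacle.} Here one compares $s_i(\bar S_{a,b}) + s_i(\bar S_{a',b'})$ with $0 \bmod 1$. The same substitutions give
\[
 s_2(\bar S_{a,b}) + s_2(\bar S_{a',b'}) \equiv -\tfrac{a+a'-r-1}{6r} \pmod 1,
\]
which for $r=1$ is $a+a' \equiv 2 \pmod 6$ and for $r=2$ is $a+a' \equiv 3 \pmod{12}$. The parallel computation of $28\,s_1 + 28\,s_1'$ yields, after using $\sign(W)=1$, the expressions $\tfrac{3(a^2+a'^2) - 2(a+a') - 2}{6}$ for $r=1$ and $\tfrac{3(a^2+a'^2) - 5(a+a')}{12}$ for $r=2$; the main technical step is to check that under the $s_2$ condition each of these is in $\Z$, yielding (a'). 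For (b') one instead asks these to be integer multiples of $28$, and rewriting $3a^2 - 2a = a(3a-2)$ and $3a^2 - 5a = a(3a-5)$ produces precisely the congruences in the statement. The only real obstacle is this arithmetic bookkeeping — verifying that the $s_2$ condition automatically implies the $28\,s_1$ condition in the four subcases (orientation-preserving, plus $r=1$ and $r=2$ orientation-reversing) — which is a straightforward check modulo the relevant primes $2$, $3$, $7$.
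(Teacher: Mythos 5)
Your proposal is correct and is essentially the paper's own (unwritten) argument: the corollary is stated as an immediate consequence of substituting the invariants of Proposition~\ref{KSspherenonspin} into Theorem~\ref{KSdiffeo}, which is exactly the computation you carry out, and your formulas for the differences and sums of $s_2$ and $28\,s_1$ (and the resulting congruences, including the restriction to $r\in\{1,2\}$ in the orientation-reversing case) all check out. The only cosmetic difference is that you invoke Montagantirud to replace $s_3$ by the linking form, whereas one can equally well note $s_3\equiv 3\,s_2+\tfrac1r \bmod 1$ and compare $s_3$ directly, which yields the same conclusion.
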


\bigskip

\subsection{Circle bundles over 2 sphere bundles which are spin}

In this section we discuss the manifolds $\bar M^t_{a,\, b }$ arising from $\Sph^2$ bundles over $\CP^2$ which are spin.  According to \eqref{SO3}, the
corresponding $\SO(3)$ principal bundle  satisfies $p_1\equiv 0 \mod
4$ and we define:
$$
\Sph^2\to \bar N_t\overset{\pi}{\longrightarrow} \CP^2
 \quad \text{ with } \quad p_1(\bar N_t)=4\,t\,x^2\quad \text{ and } \quad w_2 = 0
$$
for some integer $t$ and we denote the corresponding $\SO(3)$ principal bundle by $\bar Q_t$.
 If we define the $\U(2)$ principal
 bundle $$\U(2)\to\bar P_t\to \CP^2 \quad \text{ with }\quad c_1(\bar P_t)=0  \quad \text{ and }
  \quad c_2(\bar P_t)=-tx^2$$
 we again have that $$\bar{N}_t\simeq P(\bar E)\simeq \bar P_t/\T^2\quad \text{ and }\quad \bar P_t/Z=\bar Q_t$$ where $\bar E=\bar P_t\times_{\U(2)}\C^2$.
 For the cohomology ring of  $\bar N_t$ we thus obtain
\begin{equation}\label{cohbasespin}
\begin{aligned}
H^2(\bar N_t) &\cong \Z \oplus \Z \,\text{with generators}\, x,y\,;\\
H^4(\bar N_t) &\cong  \Z \oplus \Z \,\text{with generators}\,
x^2,xy\,\text{and relationship}\, y^2 = t\,x^2\,;\\
H^6(\bar N_t) &\cong  \Z \,\text{with generator}\, x^2y\,\text{and}\,
x^3=0,y^2\,x = 0,y^3=t\,x^2y\,.\\
\end{aligned}
\end{equation}
where $y=c_1(S^*)$ and $S^*$ is the dual of the
tautological complex line bundle $S$ over $P(\bar E)$.

\smallskip

\bigskip

We now define the circle bundles over $\bar N_t$ via
$$\Sph^1\to  \bar M^t_{a, b}
 \overset{\sigma}{\longrightarrow}  \bar N_t
 \quad \text{ with }\quad  e(\bar M^t_{a, b} )=ax +by
$$
and $(a,\, b ) =1$. One of the differences with $N_t$ is that $\bar N_t$ is  not spin and hence
$\bar M^t_{a, b} $ may or may not be spin. One easily sees

\begin{prop}\label{circletopspin}
If $r=|a^2-t\,b^2|$ with $r\ne 0$, the manifolds $\bar M^t_{a,\, b }$ have cohomology type $\bar E_r$
if $b$ is odd, and  cohomology type $ E_r$ if $b$ is even. Furthermore, the
 first Pontryagin class  is $p_1(T\bar M^t_{a,\, b })\equiv
(3+4\,t)\,b^2 \text{ mod } r$.
\end{prop}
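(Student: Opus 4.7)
The plan is to mirror the proof of Proposition \ref{circletop} closely, with the main adjustments coming from the different cohomology ring of $\bar N_t$ (relation $y^2 = tx^2$ rather than $y^2 = -xy - tx^2$) and from the fact that, in the spin setting, the criterion for the total space to be spin depends on the parities of $a$ and $b$.

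First, I would apply the Gysin sequence to the circle bundle $\sigma\colon \bar M^t_{a,b}\to \bar N_t$. Using the ring structure \eqref{cohbasespin}, cup product with $e=ax+by$ sends $x\mapsto ax^2+bxy$ and $y\mapsto tbx^2+axy$, so the corresponding matrix
\[
\begin{pmatrix} a & tb\\ b & a \end{pmatrix}
\]
has determinant $a^2-tb^2$, and its entries have gcd $1$ since $\gcd(a,b)=1$. Hence the cokernel is cyclic of order $r=|a^2-tb^2|$, and the Gysin sequence gives $H^2(\bar M^t_{a,b};\Z)\cong\Z$, $H^4(\bar M^t_{a,b};\Z)\cong\Z_r$, with $H^i$ in the remaining degrees being as required for type $E_r$ or $\bar E_r$.

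Next, I choose a generator $u$ of $H^2(\bar M^t_{a,b};\Z)$. The vanishing of $\sigma^*(e)$ together with the surjectivity of $\sigma^*$ in degree $2$ and $\gcd(a,b)=1$ forces, after possibly replacing $u$ by $-u$,
\[
\sigma^*(x)=bu,\qquad \sigma^*(y)=-au.
\]
In particular, $\sigma^*(x^2)=b^2u^2$. To show $u^2$ generates $H^4(\bar M^t_{a,b};\Z)\cong\Z_r$, note that $\gcd(b,r)=\gcd(b,a^2-tb^2)=\gcd(b,a^2)=1$, so $b^2$ is a unit modulo $r$; since $\sigma^*(x^2)=b^2u^2$ lies in the image of the surjection $\sigma^*$ on $H^4$, it is a generator, and hence so is $u^2$.

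For the spin type, I use $T\bar M^t_{a,b}=\sigma^*(T\bar N_t)\oplus\Id$ together with $T\bar N_t=\pi^*(T\CP^2)\oplus V$ and $V\oplus\Id=\pi^*(\bar E^3)$, where $\bar E^3$ is the rank $3$ vector bundle corresponding to $\bar N_t$. Since $w_2(\bar E^3)=0$ in the spin case, one has $w_2(V)=0$, so $w_2(T\bar N_t)=\pi^*(w_2(T\CP^2))=\pi^*(x)$. Therefore
\[
w_2(T\bar M^t_{a,b})=\sigma^*\pi^*(x)=bu\pmod 2,
\]
which vanishes precisely when $b$ is even. This gives the dichotomy between cohomology types $E_r$ (spin, $b$ even) and $\bar E_r$ (non-spin, $b$ odd).

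Finally, to compute $p_1$ I combine $p_1(\bar E^3)=4tx^2$ with $p_1(T\CP^2)=3x^2$ to get $p_1(T\bar N_t)=(3+4t)x^2$, and then
\[
p_1(T\bar M^t_{a,b})=\sigma^*\pi^*((3+4t)x^2)=(3+4t)b^2u^2,
\]
which is the claim modulo $r$. The only genuinely new step compared with \pref{circletop} is verifying that $u^2$ still generates $H^4$ in the spin setting; the number-theoretic identity $\gcd(b, a^2-tb^2)=1$ handles this cleanly, and I expect this to be the only place where carelessness with the quadratic relation $y^2=tx^2$ (versus the non-spin relation) could cause trouble.
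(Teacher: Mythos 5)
Your proposal is correct and follows essentially the same route the paper intends: it is the argument of \pref{circletop} adapted to the ring \eqref{cohbasespin}, and your intermediate facts ($\sigma^*(x)=\pm b u$, $\sigma^*(y)=\mp a u$, $w_2(T\bar M^t_{a,b})\equiv bu \bmod 2$, $p_1(T\bar N_t)=(3+4t)x^2$) are exactly those the paper uses later in the proof of \pref{KSCirclespin}. One small rephrasing: the sentence ``since $\sigma^*(x^2)=b^2u^2$ lies in the image of the surjection it is a generator'' is not a valid inference as written; the clean statement is that the image of $\sigma^*$ in degree $4$ is spanned by $\sigma^*(x^2)=b^2u^2$ and $\sigma^*(xy)=\mp ab\,u^2$, both multiples of $u^2$, so surjectivity of $\sigma^*$ on $H^4$ already forces $u^2$ to generate $H^4(\bar M^t_{a,b};\Z)\cong\Z_r$ (your observation $\gcd(b,r)=1$ is then not even needed, though it is correct).
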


\begin{rem*} The homotopy invariant $\pi_4(\bar
M^{2t}_{a,b})=\pi_4(\bar N_t)$ again does  not depend on $a,b$. We suspect
that, as in the case of non-spin bundles, $\pi_4(\bar M^t_{a,b})=0$
if $t$ odd, and $\pi_4(\bar M^{t}_{a,b})=\Z_2$ if $t$ even. For
example, it follows from \cref{MandL} that $\pi_4(\bar
M^{0}_{a,b})=\Z_2$.
\end{rem*}

In order to describe the circle bundles in a different way, as in
\pref{same},  we consider $\bar N_t$ as the projectivization of the
vector bundle $\bar E=\bar P_t^*\times_{\U(2)}\C^2$ where $c_1( \bar
P_t^*)=2x$ and $c_2( \bar P_t^*)=-(t-1) x^2$. Since
$c_1^2-4c_2=4tx^2$ and since $w_2\equiv c_1\mod 2=0$,
 the
projectivization of the vector bundle associated to $\bar P^*_t$ is again $\bar N_t$ and
$\bar P^*_t/\T^2=\bar N_t$
as well. But notice that $\pi_1(\bar P_t^*)\cong \Z_2$ since in the spectral sequence for the
 $\U(2)$ principal bundle
 $d_2$ takes a generator in $H^1(U(2),\Z)\cong \Z$ to $c_1=2x$. Similarly, since $w_2(\bar Q_t)=0$,
 \eqref{SO3p1} implies  that $\pi_1(\bar Q_t)\cong \Z_2$.
 We let $\bar P'_t$ and $\bar Q'_t$ be the universal covers of  $\bar P^*_t$ and $\bar Q_t$.
We thus obtain the spin bundles
$$
\S^1\times\S^3\to\bar P_t'\to \CP^2\quad , \quad \S^3\to \bar
Q_t'\to\CP^2 \qquad \text{ with } \qquad \bar P_t'/\T^2=\bar N_t = \bar Q_t'/\SO(2).
$$

We can now formulate the analogue to \pref{same}:

\begin{prop}\label{samespin}
The circle bundle $\S^1\to \bar M^t_{a,\, b }\to \bar N_t$ can be equivalently
described as the circle bundle $\T^2/\S^1_{-b,\, a }\to
\bar P_t'/\S^1_{-b,\, a }\to \bar P_t'/\T^2$, where
$\S^1_{a,b}=\diag(z^{a},z^{b})\subset\S^1\times\S^1\subset \S^1\times\S^3$.
\end{prop}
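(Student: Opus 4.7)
The plan is to mirror the proof of Proposition~\ref{same}, using the Serre spectral sequence of the $\T^2$-bundle $\bar P_t'\to \bar P_t'/\T^2=\bar N_t$ to reduce the proposition to computing two transgressions. Let $\mu,\nu\in H^1(\T^2,\Z)$ denote the canonical generators dual to the two $\S^1$-factors of $\T^2\subset \S^1\times\S^3$. A direct computation shows that the generator of $H^1(\T^2/\S^1_{-b,a})\cong \Z$ pulls back to $a\mu+b\nu\in H^1(\T^2)$, and under the transgression this class maps to the Euler class of the circle bundle $\bar P_t'/\S^1_{-b,a}\to \bar N_t$. By linearity of the transgression, the proposition reduces to the two identifications
\[
d_2(\mu)=x\quad\text{and}\quad d_2(\nu)=y\in H^2(\bar N_t,\Z).
\]

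For $d_2(\mu)=x$: the subgroup $\S^1_{0,1}$ lies in the $\S^3$-factor as the standard maximal torus, and maps to $\diag(z,z^{-1})\subset \SU(2)\subset \U(2)$ under the double cover $\S^1\times\S^3\to \U(2)$. Hence $\bar P_t'/\S^1_{0,1}$ is a $2$-fold cover of $\bar P_t^*/\diag(z,z^{-1})\cong \bar P_t^*\times_{\SU(2)}\CP^1$. Applying the argument of Proposition~\ref{same} to $\bar P_t^*$ in place of $P_t$ and using \eqref{cherncircle} together with $c_1(\bar P_t^*)=2x$, the circle bundle $\bar P_t^*\times_{\SU(2)}\CP^1\to \bar N_t$ has Euler class $2x$; halving under the $2$-fold cover yields $d_2(\mu)=x$.

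For $d_2(\nu)=y$: the subgroup $\S^1_{1,0}$ is the first $\S^1$-factor and maps isomorphically onto the center $Z\subset \U(2)$, so $\bar P_t'/\S^1_{1,0}$ is the universal cover $\bar Q_t'$ of $\bar Q_t=\bar P_t^*/Z$. The circle bundle $\bar Q_t\to \bar N_t$ is the $\SO(2)$-bundle of the vertical tangent bundle of the $\S^2$-bundle $\bar N_t=\bar Q_t\times_{\SO(3)}\Sph^2\to \CP^2$. A standard computation on the projective bundle $\bar N_t=P(\bar E)$ using \eqref{cohbasespin} and $c_1(\bar E)=0$ gives $e(\bar Q_t\to \bar N_t)=c_1(S^*\otimes S^\perp)=2y$, and passing to the universal $\Z_2$-cover halves this to $d_2(\nu)=y$. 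The main technical obstacle is to justify the halving of the Euler classes under the $2$-fold covers in both computations; this follows from $\S^1_{0,1}\cap \Z_2=\S^1_{1,0}\cap \Z_2=\{(1,1)\}$, so that the kernel $\Z_2=\{\pm(1,1)\}$ of $\S^1\times\S^3\to \U(2)$ is not absorbed into either $\S^1$ subgroup.
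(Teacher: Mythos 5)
Your reduction to the two transgressions is the same mechanism as in \pref{same} (linearity in $(c,d)$ plus two special circle bundles), and your computation of $d_2(\mu)$ — passing to the two-fold cover of $\bar P^*_t/\S^1_{1,-1}\cong \bar P^*_t\times_{\SU(2)}\CP^1$, whose Euler class over the base is the pullback of $c_1(\bar P^*_t)=2x$ by \eqref{cherncircle} — is essentially what the paper does. Where you genuinely diverge is $d_2(\nu)$: you identify $\bar Q_t\to \bar N_t$ intrinsically as the unit circle bundle of the vertical tangent bundle of the $\Sph^2$-bundle and compute $c_1(T^{\mathrm{vert}})=c_1(S^*\otimes(\bar E/S))=2y$ using $c_1(\bar E)=0$. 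The paper instead runs everything inside the auxiliary bundle $\bar P^*_t$, obtains the Euler classes in terms of the tautological class $y'$ of $P(\bar P^*_t\times_{\U(2)}\C^2)$, and must then convert bases via the ring \eqref{cohbasespin}, which only yields $y=\e(x+y')$ for an undetermined sign $\e$. Your route bypasses that bookkeeping, and it also makes the tacit identification of $\bar P'_t/\T^2$ with $\bar N_t$ comparatively harmless, since the vertical frame bundle description is preserved by any bundle isomorphism over $\CP^2$ — but you should say this explicitly, because that identification is exactly where the paper's ambiguity lives.

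The genuine gap is the signs, and in this proposition that is not cosmetic. Each of your steps determines a class only up to sign: ``the generator'' of $H^1(\T^2/\S^1_{c,d})$ is not canonical, the fiber orientations of $\bar P'_t/\S^1_{0,1}$ and of $\bar Q'_t$ are never fixed, and the compatibility of these choices with the complex orientations of the determinant bundle and of $T^{\mathrm{vert}}$ under the two-fold covers is never checked. In the analogous step of \pref{same} this is where the actual work lies, and it produced a nontrivial flip (the bundle $P_t/\S^1_{0,1}$ turned out to be the \emph{dual} of the tautological bundle). The overall sign is immaterial (it only reverses the fiber orientation, replacing $(a,b)$ by $(-a,-b)$), but the \emph{relative} sign of $d_2(\mu)$ and $d_2(\nu)$ is not: if it comes out opposite to what you assert, your argument proves $\bar M^t_{a,b}=\bar P'_t/\S^1_{b,a}$ rather than $\bar P'_t/\S^1_{-b,a}$. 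The paper confronts precisely this point: it carries the sign $\e$ along and then shows it is irrelevant because $\S^1_{a,b}$ and $\S^1_{a,-b}$ are conjugate via $(1,j)\in\S^1\times\S^3$, so that $\bar P'_t/\S^1_{a,b}$ and $\bar P'_t/\S^1_{-a,b}$ are orientation preservingly diffeomorphic, after which $\e=-1$ is a convention. To close your proof you must either carry out the orientation checks in both transgression computations (in the style of the $c_1(P_t/\S^1_{0,1})=y$ and $c_1(P_t/\S^1_{1,-1})=x$ arguments in \pref{same}), or supplement your argument with this conjugation observation to show the residual sign does not affect the statement.
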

\begin{proof}
The proof is similar to the proof of \pref{same} and we indicate the changes that need to be made as
one goes from $\U(2)$ principal bundles to $\S^1\times\S^3$ principal bundles.

In the proof of \pref{same} we showed that if $P\to\CP^2$ is a
$\U(2)$ principal bundle,  the circle bundle $P/\S^1_{r,s}\to
P/\T^2$ has Euler class linear in $r$ and $s$ and
$e(P/\S^1_{0,1})=c_1(S^*)$ and $e(P/\S^1_{1,-1})=c_1(P)$. These
results did not depend on whether the bundle is spin or not. We now
apply this to the bundle $\bar P^*$ above.
  Notice though that if we describe $\bar N_t$ as $ \bar P^*_t/\T^2$,
the cohomology ring is expressed in terms of a different basis $x,\,
y'$ and now ${y'}^2=-2xy'+(t-1)x^2$, whereas in the $x,y$ basis we
have $y^2=tx^2$.
 On the other hand, the only elements  $z\in H^2(\bar N_t)$ with $z^2=t x^2$ are a multiple of $x$, or $z=\pm y$.
 But if $z=\alpha x+\beta y'$, we have $z^2=t x^2$  only if $\beta=0$,   or
 $\alpha=\beta=\pm 1$.  Since $y$ is not a multiple of $x$, this implies  that $z=\pm (x+y')$. Thus, depending on a sign
 $\e=\pm 1$,  $y=\e (x+y')$.
  As indicated above,
 $e(\bar P^*_t/\S^1_{0,1})=y'$
 and $e(\bar P^*_t/\S^1_{1,-1})=c_1(\bar P^*_t)=2x$. This implies that
  $e(\bar P^*_t/\S^1_{r,s})=2r x+(r+s)y'=(r-s)x+\e (r+s)y$ and hence
  $\bar P_t^*/\S^1_{r,\, s }=\bar M^t_{ ( r-s, \e\, (r+s) )}$.

  Recall that in the two fold cover $\S^1\times\S^3\to\U(2)$ the circle $\S^1_{a,b}$ is mapped to the circle
$\S^1_{a+b,a-b}$ and thus $\bar P'/\S^1_{a,b} \to \bar P^*/\S^1_{a+b,a-b}=\bar
M^t_{2b,\e 2a}$ is a two fold cover. But from the Gysin sequence it follows that
 $\pi_1(M^t_{2b,\e 2a})=\Z_2$ and thus $M^t_{b,\e a}$ is the only two fold cover and hence
 $\bar P'/\S^1_{a,b}= M^t_{b,\e a}$. In other words, $M^t_{a,b}$
 is orientation preserving diffeomorphic to $\bar P'/\S^1_{\e b,a}$.

 We now claim that we can make
 an arbitrary choice in the value of $\e$. Indeed, by conjugating with $(1,j)\in\S^1\times\S^3$, we see that the circles
$\S^1_{a,b}$ and
 $\S^1_{a,-b}$ are conjugate in $ \S^1\times\S^3$. But notice that this conjugation also reverses the
  orientation of the circle. Changing
 the sign of $a$ and $b$ changes the sign of the Euler class, and hence the
 orientation. Thus $\bar P'/\S^1_{a,b}$ and $\bar P'/\S^1_{-a,b}$ are
 orientation preserving diffeomorphic. We will make the choice of
 $\e =-1$ for symmetry reasons.
\end{proof}

\begin{rem*} In this description of $\bar M^t_{a,\, b} $ it is important that we
choose the bundle $\bar P_t' $ instead of either $\bar P_t$ or $\bar
P_t^*$. Indeed, following the same proof, one sees that
 $e(\bar P_t/\S^1_{1,-1})=0$
and hence $e(\bar P_t/\S^1_{r,s})=(r+s)y$, i.e. among the
circle bundles $\bar P_t/\S^1_{r,s}\to \bar N_t$ we obtain up to covers only
one bundle. In terms of the circle bundles $\bar P^*_t/\S^1_{r,s}\to
\bar N_t$, the proof also shows that $\bar P^*_t/\S^1_{r,s}= \bar
M^t_{r-s, - (r+s)} $, which only gives half of the manifolds $\bar
M^t_{a,\, b }$.
\end{rem*}

We can now discuss which of these manifolds carry a metric with non-negative curvature:

\begin{cor}
The manifolds $\bar M^t_{a,b}$ with $t$ even admit a metric with non-negative curvature.
\end{cor}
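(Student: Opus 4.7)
The plan is to realize $\bar M^{2s}_{a,b}$ as a quotient by a free circle action of $\S^5 \times \S^3$ equipped with a non-negatively curved product metric, and then invoke O'Neill's formula exactly as in the proof of \cref{K >= 0}. The starting point is \pref{samespin}, which expresses $\bar M^t_{a,b}$ as $\bar P'_t/\S^1_{-b,a}$, where $\bar P'_t$ is the $\S^1 \times \S^3$-principal bundle over $\CP^2$ obtained, under the standard double cover $\S^1 \times \SU(2) \to \U(2)$, as the fibre product of the Hopf bundle $\S^5 \to \CP^2$ (with $c_1=x$) and an $\SU(2)$-principal bundle $P_2 \to \CP^2$ with $c_2(P_2) = -t\, x^2$.

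First I would identify $\bar P'_t$ diffeomorphically with $\S^5 \times \S^3$ when $t$ is even. Viewing $\bar P'_t$ as the pullback of $P_2$ via the Hopf projection $h\colon \S^5 \to \CP^2$, we obtain an $\SU(2)$-bundle over $\S^5$, classified by an element of $\pi_4(\SU(2)) \cong \Z_2$. Since the 5-skeleton of $B_{\SU(2)}=\HP^\infty$ is $\HP^1 \cong \S^4$, the classifying map of $P_2$ restricted to the 5-skeleton factors as
$$
\CP^2 \xrightarrow{q} \CP^2/\CP^1 \cong \S^4 \xrightarrow{\deg(-t)} \S^4,
$$
and the composition $q \circ h\colon \S^5 \to \S^4$ is classically the generator of $\pi_5(\S^4) \cong \Z_2$, namely the double suspension $\Sigma^2 \eta$ of the Hopf map. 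It follows that the class of $\bar P'_t \to \S^5$ in $\pi_4(\SU(2))$ is $-t \bmod 2$, which vanishes precisely when $t$ is even. For $t = 2s$ this yields an $\S^1 \times \S^3$-equivariant diffeomorphism of principal bundles $\bar P'_{2s} \cong \S^5 \times \S^3$, with $\S^1$ acting on $\S^5$ by the Hopf action and $\S^3 \cong \SU(2)$ acting on itself by right translation.

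The remaining step is purely geometric: endow $\S^5$ with its round metric and $\S^3$ with a bi-invariant metric. The resulting product metric on $\S^5 \times \S^3$ has non-negative sectional curvature and is invariant under the entire $\S^1 \times \S^3$-action, so the circle subgroup $\S^1_{-b,a}$ acts by isometries. O'Neill's formula applied to the Riemannian submersion $\bar P'_{2s} \to \bar M^{2s}_{a,b}$ then produces the desired non-negatively curved metric on the quotient.

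The main obstacle is the identification $\bar P'_{2s} \cong \S^5 \times \S^3$, which rests on the $\Z_2$-classification of $\SU(2)$-bundles over $\S^5$ pulled back from $\CP^2$ together with the standard identification of $q \circ h$ with $\Sigma^2 \eta$. Neither ingredient is deep, but both require a moment of care.
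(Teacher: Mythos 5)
Your proposal breaks at two points, one topological and one about equivariance, and the second is fatal. The topological claim that $q\circ h\colon \S^5\to\CP^2/\CP^1\cong\S^4$ is the generator $\Sigma^2\eta$ of $\pi_5(\S^4)\cong\Z_2$ is false: the mapping cone of $q\circ h$ is $\CP^3/\CP^1$, whose $\operatorname{Sq}^2\colon H^4\to H^6$ is computed in $\CP^3$, where $\operatorname{Sq}^2(x^2)=2x^3=0$ mod $2$; since $\Sigma^2\eta$ is detected by $\operatorname{Sq}^2$ in its cone, $q\circ h$ is null-homotopic and $\CP^3/\CP^1\simeq\S^4\vee\S^6$. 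Consequently $h^*P_2$ is trivial for \emph{every} $t$ (any classifying map $\CP^2\to\HP^\infty$ may be compressed through $q$ because $\HP^\infty$ is $3$-connected, and composing with $h$ is then null), so your mechanism produces no parity condition at all; the hypothesis that $t$ be even cannot be recovered from triviality of this pullback. (The non-equivariant conclusion $\bar P_t'\cong\S^5\times\S^3$ for all $t$ is in fact correct, but it does not help, and note it is only a statement about diffeomorphism type.)

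The fatal point is the asserted $\S^1\times\S^3$-equivariant identification $\bar P_{2s}'\cong\S^5\times\S^3$ with the product action (Hopf action on $\S^5$, translation on $\S^3$). Such an equivariant diffeomorphism between free $\S^1\times\S^3$-manifolds descends to a self-diffeomorphism of $\CP^2$ and is an isomorphism of principal bundles up to pullback; since every self-diffeomorphism of $\CP^2$ preserves $x^2$, it would force $c_2=-t\,x^2=0$, i.e.\ $t=0$. A trivialization of $h^*P_2$ is only $\SU(2)$-equivariant, and transporting the circle action through it gives $z\cdot(u,q)=(zu,\,c(z,u)\,q)$ for a non-constant cocycle $c$, which is not an isometry of the round product metric. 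Indeed, free isometric $\S^1\times\S^3$-actions of this type on the product metric only realize the bundles with $t$ a perfect square, via $z\cdot(u,q)=(zu,\diag(z^k,\bar z^k)q)$, which has $c_2=-k^2x^2$ (compare the remark after \pref{topspin}); so for a general even $t$ the product-metric strategy cannot be repaired. The paper argues differently: by \cite{GZ2}, a $\U(2)$-principal bundle over $\CP^2$ with $w_2=0$ and $c_1^2-4c_2$ divisible by $8$ admits a $\U(2)$-invariant metric of non-negative curvature; for $\bar P_t^*$ one has $c_1^2-4c_2=4t$, which is where ``$t$ even'' enters. This invariant (cohomogeneity-one Grove--Ziller, not product) metric is lifted to the double cover $\bar P_t'$, where it is $\S^1\times\S^3$-invariant, and then pushed down to $\bar M^t_{a,b}=\bar P_t'/\S^1_{-b,a}$ by O'Neill, using \pref{samespin}.
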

\begin{proof}  In \cite{GZ2} it was shown that any $\U(2)$ principal bundle with
$w_2=0$ and $c_1^2-4c_2$ divisible by $8$ admits a metric with
nonnegative curvature invariant under $\U(2)$. Since for $\bar
P_t^*$ we have  $c_1^2-4c_2=4t$,  it admits such a metric when $t$
is even. Thus the two fold cover $\bar P_t'$ admits a non-negatively
curved metric invariant under $\S^1\times\S^3$ and since by
\pref{samespin} we have $\bar P_t'/\S^1_{-b,\, a }=\bar M^t_{a,b}$,
and O'Neil's formula implies the claim.
\end{proof}

\smallskip
We now discuss various natural diffeomorphisms.

\begin{cor}
The manifold $\bar M^t_{a,b}$ is orientation preserving
diffeomorphic to $\bar M^t_{-a, b}$, and orientation reversing
diffeomorphic to $\bar M^t_{a, -b}$ and  $\bar M^t_{-a, -b}$. Thus
$\bar M^t_{\pm a, \pm b}$ are all diffeomorphic to each other.
\end{cor}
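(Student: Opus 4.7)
The plan is to work with the alternative description $\bar M^t_{a,b} = \bar P_t'/\S^1_{-b,a}$ from \pref{samespin}, where $\S^1_{-b,a} = \diag(z^{-b},z^a)$ is the indicated subcircle of the maximal torus of $\S^1\times\S^3$, and to realize the three claimed diffeomorphisms via explicit symmetries of the principal bundle $\bar P_t'$ together with inversion of the Euler class.

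For $\bar M^t_{a,b}\simeq \bar M^t_{-a,b}$ orientation preserving, I would use right translation by $(1,j)\in\S^1\times\S^3$ on $\bar P_t'$. The quaternionic identity $je^{i\theta}j^{-1}=e^{-i\theta}$ gives $(1,j)^{-1}\S^1_{-b,a}(1,j)=\S^1_{-b,-a}$ as subgroups, so this translation descends to a diffeomorphism $\bar P_t'/\S^1_{-b,a}\to \bar P_t'/\S^1_{-b,-a}$; by \pref{samespin} this is a map $\bar M^t_{a,b}\to \bar M^t_{-a,b}$. For the orientation check, right translation preserves the orientation of any Lie group and hence of the total space $\bar P_t'$; moreover $\Ad_{(1,j)^{-1}}$ acts on $\ft^2$ as $\diag(1,-1)$ and therefore sends the positively oriented vertical generator $(-b,a)$ of $\S^1_{-b,a}$ to the positively oriented generator $(-b,-a)$ of $\S^1_{-b,-a}$. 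With both the total orientation of $\bar P_t'$ and the vertical orientation preserved, the quotient orientation is preserved.

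For $\bar M^t_{a,b}\simeq \bar M^t_{-a,-b}$ I would argue directly from the Euler class: $e(\bar M^t_{-a,-b})=-(ax+by)=-e(\bar M^t_{a,b})$, so the two circle bundles share the same underlying smooth total space but with opposite fiber orientations; with the base orientation of $\bar N_t$ fixed, their total-space orientations are therefore opposite. Composing this with the orientation-preserving diffeomorphism $\bar M^t_{a,b}\simeq \bar M^t_{-a,b}$ yields the third required diffeomorphism, $\bar M^t_{a,b}\simeq \bar M^t_{a,-b}$, which is thus orientation reversing.

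The only non-routine ingredient is the orientation bookkeeping in the first step; this reduces to the elementary computation $\Ad_j=-\id$ on $\R\cdot i\subset\fsu(2)$, combined with the standard fact that right translations on a Lie group are orientation preserving.
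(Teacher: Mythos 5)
Your proposal is correct and follows essentially the paper's own route: the paper likewise deduces the corollary from \pref{samespin} together with conjugation by $(1,j)\in\S^1\times\S^3$ (which is exactly your right translation) and the sign change of the Euler class to handle $\bar M^t_{-a,-b}$. Your orientation bookkeeping, via the orientation of $\bar P_t'$ and the image of the vertical generator under $\Ad_{(1,j)}$, is just a more explicit version of the paper's remark that this conjugation reverses the orientation of the circle while changing the sign of the Euler class reverses the orientation of the total space.
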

This follows from  \pref{samespin},  together with the observation
that $\bar P'/\S^1_{a,b}$ and $\bar P'/\S^1_{-a,b}$ are orientation
preserving diffeomorphic, see the end of the proof of
\pref{samespin}.

\smallskip

Next, recall that we have circle bundles $L_{a,b}\to
\CP^2\times\CP^1$ with Euler class $e=ax+by$.

\begin{cor}\label{MandL}
The manifold $\bar M^0_{a,b}$ is naturally diffeomorphic to $L_{a,b}$.
\end{cor}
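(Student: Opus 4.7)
The plan is to reduce the statement to the identification $\bar N_0 \cong \CP^2\times\CP^1$ and then observe that both circle bundles are defined by the same Euler class in compatible bases.

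First I would show that the $\U(2)$ principal bundle $\bar P_0 \to \CP^2$ is trivial. By construction we have $c_1(\bar P_0)=0$ and $c_2(\bar P_0)=0$, and since $\U(2)$ principal bundles over $\CP^2$ are classified by their Chern classes, this forces $\bar P_0 \cong \CP^2\times\U(2)$. Consequently
\[
\bar N_0 = \bar P_0/\T^2 \cong \CP^2\times(\U(2)/\T^2) \cong \CP^2\times\CP^1,
\]
which already identifies the base of the two circle bundles under consideration.

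Next I would match the cohomology basis $x,y\in H^2(\bar N_0;\Z)$ used to define $\bar M^0_{a,b}$ with the natural basis of $H^2(\CP^2\times\CP^1;\Z)$ used to define $L_{a,b}$. The class $x$ is by definition pulled back from $\CP^2$ through $\pi\colon\bar N_0\to\CP^2$, which under the identification above is simply the projection $\CP^2\times\CP^1\to\CP^2$; so $x$ corresponds to the generator of $H^2(\CP^2)$. The class $y=c_1(S^*)$, where $S$ is the tautological line bundle on $P(\bar E)$ and $\bar E = \CP^2\times\C^2$ is the trivial rank-two bundle, restricts on each fibre $\{p\}\times\CP^1$ to the dual of the tautological line bundle on $\CP^1$, i.e.\ to the natural positive generator of $H^2(\CP^1)$. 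Thus $(x,y)$ is precisely the product basis of $H^2(\CP^2\times\CP^1;\Z)$.

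With this identification in place, both $\bar M^0_{a,b}\to\bar N_0$ and $L_{a,b}\to\CP^2\times\CP^1$ are the principal $\S^1$-bundles over $\CP^2\times\CP^1$ classified by the Euler class $ax+by$, so they are isomorphic as principal bundles and in particular diffeomorphic. The only point requiring any care is orientation: the orientations of $\bar M^0_{a,b}$ and $L_{a,b}$ are both induced from the product orientation on the base together with the orientation of the fibre determined by the Euler class, so the diffeomorphism is canonically orientation preserving. There is no real obstacle here; the content of the proposition is entirely in the triviality of $\bar P_0$, which is immediate once one knows the Chern class classification.
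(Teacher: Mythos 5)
Your argument is correct, but it follows a genuinely different route from the paper's. You work with the defining description of $L_{a,b}$ as the circle bundle over $\CP^2\times\CP^1$ with Euler class $ax+by$: you trivialize $\bar P_0$ (its Chern classes vanish and $\U(2)$-bundles over $\CP^2$ are classified by $c_1,c_2$, as recorded in Section 1), deduce $\bar N_0\cong\CP^2\times\CP^1$, check that the basis $x,\ y=c_1(S^*)$ becomes the product basis because $S$ is the pullback of the tautological bundle of the second factor, and then invoke the classification of principal circle bundles by their Euler class. The paper instead works with the second description of $L_{a,b}$, as the quotient of $\Sph^5\times\Sph^3$ by the circle $(u,v)\mapsto(z^{-b}u,z^{a}v)$: it identifies the principal $\S^1\times\S^3$-bundle $\bar P_0'$ with $\Sph^5\times\Sph^3$ by computing $c_1$ and $c_2$ of $R^*=R/\Z_2$ via \eqref{cherncircle} and \eqref{chernpm}, and then quotes \pref{samespin} to write $\bar M^0_{a,b}=\bar P_0'/\S^1_{-b,a}$, which is literally $L_{a,b}$. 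Your route is more elementary and avoids \pref{samespin} altogether; the paper's route produces the diffeomorphism as an equality of quotients of $\Sph^5\times\Sph^3$ (hence ``natural'' in a stronger sense) and at the same time pins down $\bar P_0'$ explicitly, which is of independent use. Two small caveats about your write-up: a possible sign discrepancy between your $y$ and the generator used in the definition of $L_{a,b}$ is harmless only because $L_{a,b}=L_{\pm a,\pm b}$, which is worth saying; and your final claim that the diffeomorphism is ``canonically orientation preserving'' is not really substantiated, since the paper fixes no orientation convention for $L_{a,b}$ here --- fortunately the statement only asserts a diffeomorphism.
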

\begin{proof}
The bundle $\bar N_t$ with $t=0$ is trivial since $p_1=0$ and
$w_2=0$. Thus $\bar N_0=\CP^2\times\CP^1$. In order to identify
$\bar P_0'$, we start with $R=\Sph^5\times\Sph^3$ and define a free
action by $\S^1\times\S^3$  on $R$ as follows.
$\S^1\subset\S^1\times\S^3$ acts as the Hopf action on the
 first factor  and  $\S^3\subset\S^1\times\S^3$ acts
via left multiplication on the second factor $\Sph^3$, regarded as
the Lie group $\Sp(1)$. Since $R/\S^1\times\S^3=\CP^2$ we thus have
a principal bundle $\S^1\times\S^3\to R\to\CP^2$ and we claim that
this principal bundle is $\bar P_0'$. To see this, we apply
\eqref{chernpm} to the $\U(2)$ principal bundle $R^*=R/\Z_2$ with
$\Z_2$ generated by $(-1,-1)\in\S^1\times\S^3$. Clearly,
$R^*/\SU(2)=\RP^5$ and hence the circle bundle \eqref{cherncircle}
has Euler class $2\, x$ which implies $c_1(R^*)=2\, x$. Furthermore,
$P_-=R^*/Z=R/\left[\S^1\times\{\pm 1\}\right]=\CP^2\times\SO(3)$.
Thus $0=p_1(P_-)=c_1^2-4 c_2$ which implies $c_2(R^*)=x^2$. Thus
$R^*=\bar P_0^*$ and hence $\bar P_0'=R=\Sph^5\times\Sph^3$. Now
\pref{samespin} implies our claim.
\end{proof}

There are again natural diffeomorphisms between the circle bundles and 3-sphere bundles as
in \eqref{diff}. From the inclusion $\S^1_{a,b}=\diag(z^{a},z^{b})\subset \S^1\times\S^3$ we obtain
lens space bundles:
$$ \Sph^3/\Z_{|b|}= \S^1\times\S^3/\diag(z^{-b},z^{a})  \to \bar P_t'/\diag(z^{-b},z^{a}) =
\bar M^t_{a,b} \to P_t/\S^1\times\S^3=
\CP^2,$$
which are 3-sphere bundles if $b=\pm 1$, and we can assume that $b=1$.

\begin{prop}\label{diffspin}
The manifold $\bar M^t_{k,1}$ is orientation preserving diffeomorphic to $\bar S_{t, k^2}$.
\end{prop}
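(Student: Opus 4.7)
The plan is to mirror the proof of \pref{diff}, replacing the $\U(2)$-principal bundle $P_t$ with the $\S^1 \times \S^3$-principal bundle $\bar P_t'$, and using an appropriate homomorphism out of $\S^1 \times \S^3$. The main idea is to realize $\bar M^t_{k, 1}$ as the sphere bundle of an $\SO(4)$-principal bundle $\bar P^* \to \CP^2$, then to compute the characteristic classes of $\bar P^*$ via its two $\SO(3)$-quotients $\bar P^*_\pm$, and finally to invoke \eqref{SO4spin} to identify this sphere bundle with $\bar S_{t, k^2}$.

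Define $\rho_k : \S^1 \times \S^3 \to \S^3 \times \S^3$ by $\rho_k(z, q) = (q, z^{-k})$. Under the standard action $(q_1, q_2) \cdot v = q_1 v \bar q_2$ of $\SO(4) = (\S^3 \times \S^3)/\Gamma$ on $\Sph^3 \subset \QH$, the induced $\rho_k$-twisted action of $\S^1 \times \S^3$ on $\Sph^3$ has isotropy at $v = 1$ equal to $\{(z, z^{-k}) : z \in \S^1\} = \S^1_{-1, k}$. Setting $\bar P^* := \bar P_t' \times_{\rho_k} \SO(4)$, its associated sphere bundle is $\bar P_t'/\S^1_{-1, k}$, which by \pref{samespin} equals $\bar M^t_{k, 1}$.

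Next, the two $\SO(3)$-quotients of $\bar P^*$ are identified via the general analysis of \eqref{chernpm}. Composing $\rho_k$ with the projection $\S^3 \times \S^3 \to \SO(4)/\S^3_- = \SO(3)$ sends $(z, q) \mapsto [z^{-k}]$, so $\bar P^*_-$ reduces to an $\SO(2)$-bundle. Its Euler class is obtained from $e(\bar P_t'/\S^3_+) = x$ --- deduced from $e(\bar P_t^*/\SU(2)) = c_1(\bar P_t^*) = 2x$ (by \eqref{cherncircle}) via the $2$-fold cover $\bar P_t' \to \bar P_t^*$ --- by first applying the degree-$(-k)$ self-map of $\S^1$ and then the double cover $\S^1 \to \S^1/\{\pm 1\} \subset \SO(3)$, giving total Euler class $-2kx$ and hence $p_1(\bar P^*_-) = 4k^2 x^2$. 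Symmetrically, composing $\rho_k$ with $\S^3 \times \S^3 \to \SO(4)/\S^3_+ = \SO(3)$ sends $(z, q) \mapsto [q]$, so $\bar P^*_+$ is the $\SO(3)$-bundle associated via the adjoint representation to the $\SU(2)$-subbundle $\bar P_t'/(\S^1 \times \{e\})$. Comparing the classifying data of $\bar P_t'$ and $\bar P_t^*$ (where $c_1(\bar P_t^*) = 2x$, $c_2(\bar P_t^*) = -(t-1)x^2$) through the isogeny $\S^1 \times \SU(2) \to \U(2)$ extracts $c_2 = -tx^2$ for the $\SU(2)$-factor of $\bar P_t'$, whence $p_1(\bar P^*_+) = -4 c_2 = 4t x^2$. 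Applying \eqref{SO4spin} with $(p_1(P_+), p_1(P_-)) = (4t, 4k^2)$ identifies $\bar P^*$ as the $\SO(4)$-bundle with parameters $(a, b) = (t, k^2)$, so the associated sphere bundle is precisely $\bar S_{t, k^2}$.

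Finally, to show the diffeomorphism is orientation-preserving, I would compare linking forms: by \pref{topspin}, $lk(\bar S_{t, k^2}) = 1/(t - k^2)$, while the analogous computation of $z^4$ for $\bar M^t_{k, 1}$ (along the lines of \cref{linkingS1}) gives the same value in $\Q/\Z$, ruling out orientation reversal. The main obstacle is the careful bookkeeping of the $\Z_2$-covers inherent to the spin setting: correctly extracting $c_2 = -tx^2$ for the $\SU(2)$-part of $\bar P_t'$ from the $\U(2)$-data of $\bar P_t^*$, and correctly tracking the factor of $2$ (up versus down) relating $e(\bar P_t'/\S^3_+)$ and $e(\bar P_t^*/\SU(2))$. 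Once these are pinned down, the rest is essentially the same algebra as in \pref{diff}.
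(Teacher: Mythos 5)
Your proposal follows the paper's own route: twist $\bar P_t'$ by a homomorphism $\S^1\times\S^3\to\SO(4)$ so that the associated sphere bundle is $\bar P_t'/\S^1_{-1,k}=\bar M^t_{k,1}$ (via \pref{samespin}), compute $p_1$ of the two $\SO(3)$-quotients (getting $4k^2$ from the circle factor and $4t$ from the $\SU(2)$ factor, the latter agreeing with the paper's identification of that quotient with $\bar P_t^*/Z=\bar Q_t$), read off the parameters from \eqref{SO4spin}, and fix the orientation by comparing linking forms. The substance is right, but note one bookkeeping slip at the end: your $\rho_k(z,q)=(q,z^{-k})$ puts the circle into the \emph{second} $\S^3$-factor, whereas the paper's $v\mapsto z^k v\bar q$ puts it into the first, so the roles of $P_+$ and $P_-$ are exchanged relative to the paper. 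With your own computed values $p_1(\bar P^*_-)=4k^2$ and $p_1(\bar P^*_+)=4t$, the convention of \eqref{SO4spin} (namely $p_1(P_-)=4a$, $p_1(P_+)=4b$) yields $(a,b)=(k^2,t)$, i.e.\ $\bar S_{k^2,t}$, not $(a,b)=(t,k^2)$ as you assert; since $\bar S_{k^2,t}$ is $\bar S_{t,k^2}$ with the opposite orientation, and you (like the paper) settle the orientation at the last step by the linking-form comparison $lk=1/(t-k^2)$ on both sides (which is correct, citing \cref{linkingspin} rather than \cref{linkingS1}), the final statement still follows — but the sentence identifying $(a,b)=(t,k^2)$ contradicts your own $p_1$ computations and should be corrected or rerouted through the orientation argument.
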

\begin{proof}
The proof is similar to \pref{diff} and we indicate the changes one needs to make.
Let $\rho_k$ be the representation of $\S^1\times\S^3$ on $\R^4\simeq \QH$ given by $v\in \QH\to
z^k v\bar q,\ (z,q)\in \S^1\times\S^3$. We then have the associated vector bundle
$\bar P'_t\times_{ \S^1\times\S^3}\R^4$.  The action on the sphere $\Sph^3\subset\R^4$ is
transitive with isotropy $(z,z^{k})$
and hence the sphere bundle of this vector bundle  is diffeomorphic to
$\bar P'_t/\S^1_{1,k}=\bar M^t_{k,-1}$.

As in the proof of \pref{diff}, we let $\hat P_t=\bar P'_t\times_{
\S^1\times\S^3}(\S^3\times\S^3/\Gamma)$ be the $\SO(4)$ principal
bundle corresponding to this sphere bundle. Here $(z^k,q)\in
\S^1\times\S^3$ acts on $\S^3\times\S^3$ as left multiplication. The
remaining computations are similar:

$$
 \hat P_-=\left[\bar P'_t\times_{
\S^1\times\S^3}(\S^3\times\S^3)/\Gamma\right]/\S^3\times\{e\}= \bar P'_t\times_{
\S^1\times\S^3}\left[\{e\}\times(\S^3/\Gamma)\right]
$$

$$
\hspace{-150pt} = \hat P'_t/\left[\S^1\times\{\pm 1\}\right] =  \bar P^*_t/Z=\bar Q^*_t
$$
and thus $p_1(\hat P_-)=4t$.
Furthermore,
$$
 \hspace{-50pt}\hat P_+=\left[\bar P'_t\times_{
\S^1\times\S^3}(\S^3\times\S^3)/\Gamma\right]/\{e\}\times\S^3
=  \bar P'_t\times_{
\S^1\times\S^3}\left[(\S^3/\Gamma)\times\{e\}\right]
$$

$$
\qquad = (\bar P'/ \{e\}\times\S^3)_{  \S^1\times\{e\}} \left[(\S^3/\Gamma)\times\{e\}\right]
 = (\bar P^*/ \SU(2))_{  (\S^1/\Gamma)\times\{e\}} \left[(\S^3/\Gamma)\times\{e\}\right]
$$

\no This bundle reduces to the circle bundle

$$(\bar P^*/ \SU(2))_{  (\S^1/\Gamma)\times\{e\}} \left[(\S^1/\Gamma)\times\{e\}\right]=(\bar P^*/ \SU(2))/\Z_k
$$

Since the $\S^1$ bundle $\bar P^*/ \SU(2)\to \bar P^*/ \U(2)$ has Euler class $c_1=2x$, this
 reduced bundle
has Euler class $2kx$ and hence $p_1(\hat P_+)=4k^2$. Thus $a=t$ and $b=k^2$.
 To see that the diffeomorphism is orientation preserving we
use  \pref{topspin}
  and  \cref{linkingspin} to see that the linking form is the same.
\end{proof}

Finally we observe:
\begin{cor}
The principal $\S^3$ bundle over $\CP^2$ with Euler class $t x^2$ is
orientation preserving diffeomorphic to $\bar M^t_{0,1}\simeq \bar
S_{t,0}$.
\end{cor}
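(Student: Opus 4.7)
The plan is to deduce this as a short consequence of \pref{diffspin} combined with the structure group reduction machinery of Section 1. The diffeomorphism $\bar M^t_{0,1}\simeq \bar S_{t,0}$ is exactly \pref{diffspin} with $k=0$, so the remaining task is to identify $\bar S_{t,0}$ as the principal $\S^3$-bundle over $\CP^2$ whose Euler class is $tx^2$.

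For the reduction, recall that the $\SO(4)$-principal bundle $P$ underlying $\bar S_{t,0}$ has $p_1(P)=2t$, $e(P)=t$, and $w_2(P)=0$. By \eqref{ppm}, the associated $\SO(3)$-principal bundle $P_-=P/\S^3_-$ satisfies $p_1(P_-)=p_1(P)-2\,e(P)=0$ and $w_2(P_-)=w_2(P)=0$. Since $\SO(3)$-principal bundles over $\CP^2$ are classified by $p_1$ and $w_2$ (Section 1), $P_-$ is trivial, and its triviality is equivalent to a reduction of the structure group of $P$ from $\SO(4)$ to $\S^3_-$. Let $P^*$ be the resulting principal $\S^3_-$-bundle; its associated $\S^3$-sphere bundle recovers $\bar S_{t,0}$.

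To pin down the Euler class of $P^*$, consider the inclusion $\S^3=\S^3_-\hookrightarrow \SO(4)$ by left quaternionic multiplication on $\QH=\R^4$. On the level of maximal tori this sends $e^{i\theta}\mapsto \diag(R(\theta),R(\theta))\in T^2\subset\SO(4)$, so in the natural bases $x_1,x_2$ of $H^2(BT^2)$ the class $e=x_1 x_2\in H^4(B\SO(4))$ pulls back to the degree-four generator of $H^4(B\S^3)$. Hence the classifying class of $P^*$ in $H^4(\CP^2,\Z)$ equals $e(P)=tx^2$, as required, and orientation-preservation is inherited from \pref{diffspin}. There is no substantive obstacle here; the only mild subtlety is checking that the sign and orientation conventions for $\S^3_-\subset\SO(4)$ match those implicit in the phrase ``Euler class $tx^2$'', which the maximal torus computation above confirms.
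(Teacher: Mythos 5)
Your proposal is correct, but it reaches the statement by a genuinely different route than the paper. The paper never looks at the $\SO(4)$-bundle underlying $\bar S_{t,0}$: it identifies $\bar M^t_{0,1}$ directly with $\bar Q_t'$, the $\S^3$-cover of the $\SO(3)$-principal bundle $\bar Q_t=\bar P_t/Z$, via $\bar P_t'/\left[\S^1\times\{e\}\right]=\bar Q_t'$ together with \pref{samespin}, and then obtains $e(\bar Q_t')=t\,x^2$ from $p_1(\bar Q_t)=4t\,x^2$, because $H^4(B_{\SO(3)},\Z)\to H^4(B_{\S^3},\Z)$ sends the Pontryagin generator to four times the Euler generator; the diffeomorphism with $\bar S_{t,0}$ is then quoted from \pref{diffspin}. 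You do the mirror image: you quote \pref{diffspin} (at $k=0$) for $\bar M^t_{0,1}\simeq\bar S_{t,0}$ and instead show directly that $\bar S_{t,0}$ is a principal $\S^3$-bundle, by noting that one of the associated $\SO(3)$-bundles $P_\pm$ has $p_1=0=w_2$ and is therefore trivial, that this triviality is equivalent to a reduction of the structure group of the underlying $\SO(4)$-bundle to the corresponding normal $\S^3$, and then computing the Euler class of the reduced bundle by restricting $e=xy$ to the maximal torus of that $\S^3$. Each argument proves one half of the chain ``principal bundle $=\bar M^t_{0,1}\simeq\bar S_{t,0}$'' directly and borrows the other half from \pref{diffspin}; yours has the merit of explaining intrinsically why the sphere bundle is a principal bundle, while the paper's stays inside the circle-bundle formalism already set up for \pref{samespin}.

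Two points you should spell out. First, the step that actually produces the diffeomorphism: once the structure group reduces to an $\S^3$ acting on $\QH$ by quaternion multiplication, which is simply transitive on the unit sphere, the associated sphere bundle $P^*\times_{\S^3}\Sph^3$ is canonically the total space $P^*$ itself; this one sentence is the entire content of the identification of $\bar S_{t,0}$ with the principal bundle, and it is only implicit in your write-up. Second, be careful with the $\pm$ labels: with the parametrization of \eqref{SO4spin}, which the paper also uses in the proof of \pref{diffspin}, it is $P_+$ rather than $P_-$ that is trivial for $\bar S_{t,0}$; the paper's formula $e=a-b$ and \eqref{ppm} differ from \eqref{SO4spin} by exactly this relabeling (equivalently, by the sign of $e$), so your computation is self-consistent with the definition of $\bar S_{t,0}$ via $p_1=2t$, $e=t$, and the conclusion $e(P^*)=t\,x^2$ comes out the same under either convention. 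But, as in the paper, the orientation-preserving refinement is really only pinned down by \pref{diffspin} together with the Euler-class sign conventions, not by the torus computation alone.
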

\begin{proof}
Recall from Section 1 that $\bar P_t/Z=\bar Q_t$ where $Z$ is the
center of $\U(2)$, and hence similarly $\bar
P'_t/\left[\S^1\times\{e\}\right]=\bar Q_t'$. Hence \pref{samespin}
implies that $\bar Q_t'=\bar M^t_{0,1}$. By assumption, we have
$p_1(\bar Q_t)=4t\, x^2$. This implies that $e(\bar Q_t')=t\, x^2$
since the homomorphism $\S^3\to\SO(3)$ induces multiplication by $4$
on $H^4(B_{\S^3},\Z)\simeq\Z\to H^4(B_{\SO(3)},\Z)\simeq\Z$ and the
generator in the first group is the Euler class and in the second
group the Pontryagin class.
\end{proof}

\bigskip

For the Kreck-Stolz invariants we have:

\begin{prop}\label{KSCirclespin}
If $ s = a^2  - b^2\,t\ne 0$, the Kreck-Stolz invariants for $\bar
M^t_{a,\, b }$  are given by:

\begin{center}
\vspace{5pt} $b$ even
\end{center}

$$
s_1(\bar M^t_{a,\, b })   \equiv\,   -\frac{1}{2^5 \cdot 7}\,\sign(W) +
                                  \frac{b}{2^7 \cdot 7}\,(6+8\,t+3\,a^2+ b^2\,t)
                   - \frac{b}{2^7 \cdot 7 \cdot s}\,(3 + 4\,t)^2  \mod 1
$$
$$
\begin{aligned}
 s_2(\bar M^t_{a,\, b })   \equiv\, & - \frac{1}{2^4\cdot 3}\, \{b\,(n^2+t\, m^2)
 - 2\,a\,n\,m\} \\
  & \quad\quad \quad\quad \quad   -       \frac{1}{2^4\cdot 3\cdot s}\,\{4\,n\,m\, \alpha
  - [3 + 4\,t - 2\,(n^2+t\, m^2)]\,\beta   \}
 \mod 1,\\
s_3(\bar M^t_{a,\, b })   \equiv\, &- \frac{1}{2^2\cdot 3}\,\{ b\,(n^2+t\, m^2) - 2\,a\,n\,m \}   \\
   & \quad\quad \quad\quad \quad  - \frac{1}{2^2\cdot 3\cdot s}\,\{16\,n\,m\,\alpha -
[3 + 4\,t - 8\,(n^2+t\, m^2)]\,\beta  \} \mod 1,
\end{aligned}
$$
\begin{center}
\vspace{20pt} $b$ odd
\end{center}

$$
\begin{aligned}
s_1(\bar M^t_{a,\, b })   \equiv\,  &- \frac{1}{2^5 \cdot 7}\,\sign(W) +
 \frac{b}{2^7 \cdot 7}\,(6+8\,t+3\,a^2+ b^2\,t)  - \frac{b}{2^7 \cdot 7 \cdot s}\,(3 + 4\,t)^2
                                 \\
       &
 \quad\quad \quad\quad \quad   - \frac{1}{2^6 \cdot 3} \,\{b\,(n^2 + t\,m^2) - 2\,a\,n\,m\}      \mod 1, \\
    &
 \quad\quad \quad\quad \quad +\frac{1}{2^7\cdot 3\cdot s}\,\{-2\,n\,m\,\alpha + (6 + 8\,t - n^2 - t\,m^2)\,\beta \} \\
 s_2(\bar M^t_{a,\, b })   \equiv\, &   -\frac{1}{2^3\cdot 3}\,  \{ b\,(n^2+t\, m^2) - 2\,a\,n\,m\}\\
 &   \quad\quad \quad\quad \quad  -       \frac{1}{2^3\cdot 3\cdot s}\,\{10\,n\,m\,\alpha  -
 [3 + 4\,t - 5(n^2+t\, m^2)]\,\beta  \}
 \mod 1, \\
s_3(\bar M^t_{a,\, b })   \equiv\, &- \frac{1}{2^3}\,\{b\,(n^2+t\, m^2) - 2\,a\,n\,m \} \\
&  \quad\quad \quad\quad \quad  - \frac{1}{2^3\cdot s}\,\{26\,n\,m\,\alpha - [3 + 4\,t -
13(n^2+t\, m^2) ]\,\beta  \} \mod 1,
\end{aligned}
$$
where   $$\alpha =a\, (n^2+t\, m^2) + 2\,t\,b\,n\,m \quad,\quad
\beta =  b\, (n^2+t\, m^2)    + 2\,a\,n\,m,$$ and $n,m \in \Z$ are
chosen such that  $a\,m + b\,n =
 1$.
 Furthermore, if $b$ is odd, we additionally require that $m$ is odd as well.
 Also,
$$ \sign(W)=\left\{
   \begin{array}{lll}
     &0, &\ \text{ if }\ s>0 \\
     &2, &\ \text{ if }\ s<0 \ \text{ and } \  b(t+1)>0 \\
     -&2, & \ \text{ if }\  s<0 \ \text{ and } \ b(t+1) <0.
   \end{array}
 \right.
$$
\end{prop}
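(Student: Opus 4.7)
The plan is to follow the strategy of \pref{KSCircle}, with three modifications coming from the fact that the base $\bar N_t$ is not spin. As a bounding manifold I will use the closed disk bundle $\sigma' \colon W = \bar W^t_{a,b} \to \bar N_t$ of the oriented rank-2 real (complex line) bundle $\bar E^2$ whose unit sphere bundle is $\sigma$, so $\partial W = \bar M^t_{a,b}$. Via $\sigma'^*$ I identify $H^*(W;\Z)$ with $H^*(\bar N_t;\Z)$, whose ring structure is given by \eqref{cohbasespin}. Pick integers $m,n$ with $am+bn=1$, and set $z := nx - my$ and $c := 0$. A short Gysin computation shows that, with $u \in H^2(\bar M^t_{a,b})$ the generator given by $\sigma^*(nx-my)$, one has $\sigma^*(x) = b\,u$ and $\sigma^*(y) = -a\,u$; in particular $\sigma^*(z) = u$ and $\sigma^*(c) = 0$.

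Before proceeding I verify the Stiefel-Whitney condition in \eqref{KSzc}. From $TW = \sigma'^*(T\bar N_t) \oplus \bar E^2$ and $T\bar N_t = \pi^*(T\CP^2) \oplus V^2$ with $V^2 \oplus \underline{\R} = \pi^*(E^3)$ and $w_2(E^3) = 0$, one gets $w_2(TW) \equiv (1+a)x + by \pmod 2$. When $b$ is even, $\gcd(a,b) = 1$ forces $a$ to be odd, so $w_2(TW) \equiv 0 \equiv c \pmod 2$ and $W$ is spin. When $b$ is odd, imposing additionally that $m$ be odd (which is always possible, as checked parity-by-parity against $am+bn=1$) yields $w_2(TW) \equiv z \equiv c + z \pmod 2$. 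Thus \eqref{KSzc} holds with $c = 0$ in both cases. Note that in contrast with \pref{KSCircle} one cannot take $c = e$, because the pullback of $w_2(T\bar N_t)$ is now nonzero.

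The remaining data for the Kreck-Stolz formulas are assembled as follows. The tangent splittings give $p_1(TW) = \sigma'^*((3+4t)x^2) + e^2 = (3+4t+a^2+tb^2)x^2 + 2ab\,xy$, using $p_1(\bar E^2) = e^2$ and $y^2 = t x^2$ from \eqref{cohbasespin}. For the signature, the Thom class $U \in H^2(W, \partial W)$ satisfies $j^* U = e$; the basis $\{x \cup U,\, y \cup U\}$ of $H^4(W, \partial W;\Q)$ produces the intersection matrix with $\det = b^2 t - a^2 = -s$ and $\tr = b(1+t)$, giving at once the three cases of $\sign(W)$ stated in the proposition.

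Finally, the characteristic numbers $p_1^2$, $z^2 p_1$ and $z^4$ are obtained by inverting the map $j^* \colon H^4(W, \partial W;\Q) \to H^4(W;\Q)$ on the above basis. Since $j^*(x \cup U) = ax^2 + b\,xy$ and $j^*(y \cup U) = bt\,x^2 + a\,xy$, a direct calculation yields
\[
(j^*)^{-1}(z^2) \;=\; \tfrac{1}{s}\bigl[\alpha\,(x \cup U) \,-\, \beta\,(y \cup U)\bigr],
\]
with $\alpha = a(n^2 + tm^2) + 2btmn$ and $\beta = b(n^2 + tm^2) + 2amn$, which explains the appearance of these combinations in the statement. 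Capping with $z^2$, $p_1$ or $p_1$ again and using $x^3 = 0$, $xy^2 = tx^3 = 0$ and $\langle x^2 y, [\bar N_t]\rangle = 1$ produces closed expressions for each characteristic number in terms of $\alpha, \beta, s$ and the integers $a,b,m,n,t$. Substituting these (with $c=0$) into \eqref{KSspin} when $b$ is even and into \eqref{KSnonspin} when $b$ is odd, and separating the $1/s$ and non-$1/s$ contributions, yields the displayed formulas. The main obstacle is purely the bookkeeping of this final algebraic simplification; no new conceptual ingredient beyond those already appearing in \pref{KSCircle} is required.
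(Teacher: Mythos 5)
Your proposal is correct and follows essentially the same route as the paper: the same disk-bundle bounding manifold with $c=0$, the same parity analysis of $w_2(TW)$ (including the extra requirement that $m$ be odd when $b$ is odd), the same computation of $p_1(TW)$ and of the signature matrix with $\det=-s$, $\tr=b(1+t)$, and the same inversion of $j^*$ via the Thom class expressed through the classes $\alpha,\beta$ (your $z=nx-my$ differs from the paper's $z=-nx+my$ only by a sign, which is immaterial since the invariants are independent of the sign of $z$ and only even powers of $z$ occur when $c=0$). The remaining step is indeed only the algebraic substitution into \eqref{KSspin} and \eqref{KSnonspin}, exactly as in the paper.
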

\begin{proof} The proof is similar to \pref{KSCircle}, and we indicate the changes that are needed.
 A natural choice
 for a bounding manifold is the  disk bundle $\gs'\colon
 \bar W^8_{a,\, b } \longrightarrow \bar N_t$ of the rank 2 vector bundle $E^2$ associated
 to the circle bundle $\gs$.  One easily shows that
$$\begin{aligned}
p_1(T\bar W) &= (3 + 4\,t)\,x^2 + e^2 \ , \ w_2(T\bar W) \equiv (1+a)\,x
+ b\,y \mod 2 \,.
\end{aligned}$$
where  $e=a\,x +b\,y$.  Following the proof of \pref{circletop}
 there exists a generator $u\in H^2(\bar M^t_{a,\, b
};\Z)\cong\Z$ with $\sigma^*(x)=-bu$ and $\sigma^*(y)=au$ and
$w_2(T\bar M^t_{a,\, b }) = b\,u \mod 2$. Thus the manifolds $\bar
M^t_{a,\, b }$ and $\bar W_{a,b}$ are both spin if $b$ is even and
both non-spin if $b$ is odd.   Hence we can choose $c=0$ in
\eqref{KSspin} and \eqref{KSnonspin}.  Next we need to choose  a
class $z \in H^2(W,\Z)\cong\Z\oplus\Z$ with $z{|\partial W} = u $.
For this we let   $m,n$ be integers with $a\,m + b\,n = 1$ and set
$z = -n\,x + m\,y$.  It follows that $\gs^*(z)=u$ and thus $z$ has
the required properties in \eqref{KSzc}.

Note that in the case of $b$ even, we obtain $w_2(T\bar W)  \equiv
(1+a)\,x + b\,y  \mod 2$ as required. However,  in the case of $b$
odd, we have $w_2(T\bar W) \equiv c + z \mod 2 \equiv -n \,x + m\,y
\mod 2$.  For this to be equivalent to $ (1+a)\,x + b\,y \mod 2$, we
additionally need to choose $m$ to be odd (which one easily sees
implies $n \equiv 1 +a \mod 2$ as required). Note that it is always
possible to choose such an $m$.

Replacing the value of $p_1$ in \eqref{KSspin} and \eqref{KSnonspin}
we obtain

\begin{center}
\vspace{5pt} $b$ even
\end{center}
$$\begin{aligned}
S_1(\bar W,z) &= -\frac{1}{2^5 \cdot 7}\,\sign(W)+ \frac{1}{2^7\cdot 7}\,((3+4\,t)^2\,x^4 + 2\,(3+4\,t)\,x^2\,e^2 + e^4),
                     \\
S_2(\bar W,z) &=-\frac{1}{2^4\cdot 3}\,((3 + 4\,t)\,z^2\,x^2 + z^2\,e^2 - 2\,z^4),
                      \\
 S_3(\bar W,z)
                     &=-\frac{1}{2^2\cdot 3}\,((3 + 4\,t)\,z^2\,x^2 + z^2\,e^2 - 8\,z^4).\  \\
\end{aligned}$$
\begin{center}
\vspace{5pt} $b$ odd
\end{center}
$$\begin{aligned}
 S_1(\bar W,z) &= -\frac{1}{2^5 \cdot 7}\,\sign(W)   + \frac{1}{2^7\cdot 3 \cdot 7}\,[3\,(3+4\,t)^2\,x^4 + 6\,(3+4\,t)\,x^2\,e^2 + 3\,e^4  \\
 &    \qquad   \qquad    \qquad    \qquad    \qquad    \qquad    \quad        - 14\,(3+4\,t)\,z^2\,x^2 - 14\,z^2\,e^2 + 7\,z^4],
                     \\
S_2(\bar W,z) &=-\frac{1}{2^3\cdot 3}\,((3 + 4\,t)\,z^2\,x^2 + z^2\,e^2 - 5\,z^4),
                      \\
 S_3(\bar W,z)
                     &=-\frac{1}{2^3}\,((3 + 4\,t)\,z^2\,x^2 + z^2\,e^2 - 13\,z^4).\
  \end{aligned}$$

 We choose an orientation for $\bar N_t$ such that $\langle x^2\,y, [\bar N_t]\rangle = 1$.
The orientation for the vector bundle $E^2$ defines a Thom class
 $U \in H^2(W,\partial W)\cong\Z$ and we define the orientation on $\bar W$ such that
$U\cap [\bar W,
 \partial \bar W]= [\bar N_t]$.  On
  $\bar M^t_{a,\, b } = \partial \bar W$ we pick the orientation induced by the orientation
  on $\bar W$. Using $j^*(U) = e(E^2)=e$ with $j: \bar W \longrightarrow (\bar W,\partial \bar W)$, one easily  computes the characteristic numbers:
$$\begin{aligned}
e^4 &=  b\,(3\,a^2 + b^2\,t) \\
e^2\,z^2 &=  -2 \,a \,n \,m + b\, n^2 + b \,t\,m^2\\
x^2\,e^2 &=   b \\
\end{aligned}$$
For the characteristic numbers involving $x$ and $z$ note that $sx^2
=  (a\,x -b\,y) \cup e$ and $s\, z^2 = (\alpha \,x - \beta \,y) \cup
e$, where $ \alpha = a\,t\,m^2 + a\,n^2 + 2\,t\,b\,n\,m$ and $\beta
= b\,t\,m^2 + b\,n^2 + 2\,a\,n\,m$.
 Thus
$$\begin{aligned}
s\, z^4&=  -2\,n\,m\,\alpha - (n^2 + t\,m^2)\,\beta\\
 s\, z^2\,x^2 &=  - \beta\\
 s\, x^4 &=   -b
 \end{aligned}.$$

 One easily shows that the signature matrix is given by
 $
 \left(
   \begin{array}{cc}
     b & a \\
     a & bt \\
   \end{array}
 \right)
 $
 and since $\det R=-s$ and $\tr R=b(t+1)$, we obtain $\sign W$ as claimed. Substituting into \eqref{KSspin} and \eqref{KSnonspin} finishes the proof.
 \end{proof}

\smallskip

Recall that   the linking form is equal
to the characteristic number $z^4$ and we hence obtain:

\begin{cor}\label{linkingspin}
If $s =  a^2  - b^2\,t$ and $r=|s|>1$, the linking form of $\bar M^t_{a,\, b }$ is given by $$lk(\bar M^t_{a,\, b })=
-\frac{1}{s} \,[b\,n^4 + 6\,b\,t\,n^2\,m^2 + 4\,a\,t\,n\,m^3 + 4\,a\,n^3\,m + b\,t^2\,m^4]
\in \Q/\Z$$  where
 $m,n \in \Z$ are chosen such that $a\,m + b\,n = 1$, and if $b$ is odd, $m$ is odd as well.
\end{cor}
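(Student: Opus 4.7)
The plan is to deduce this directly from what has already been assembled in the proof of \pref{KSCirclespin}. As noted in the discussion before \cref{linkingS1} (and following \cite{KS2, Mo}), for manifolds of cohomology type $E_r$ or $\bar E_r$ the linking form is recovered from the Kreck-Stolz setup as the characteristic number $z^4$, evaluated on a bounding manifold $W$ equipped with the distinguished class $z\in H^2(W,\Z)$ with $z|\partial W = u$. Consequently no new geometric input is needed: it suffices to extract the value of $z^4$ from the computation already performed for the disk bundle $\bar W \to \bar N_t$.

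First I would recall from the proof of \pref{KSCirclespin} that, on the disk bundle $\bar W^8_{a,b}$ of the complex rank $2$ bundle $E^2$ over $\bar N_t$, one uses the class $z = -n\,x + m\,y$ with $am+bn=1$ (and $m$ odd if $b$ is odd). The key identity derived there is
\[
s\, z^2 = (\alpha x - \beta y) \cup e, \qquad \alpha = a(n^2+tm^2) + 2tbnm,\quad \beta = b(n^2+tm^2)+2anm,
\]
which together with $j^*(U)=e=ax+by$ gives
\[
s\, z^4 = -2nm\,\alpha - (n^2+tm^2)\,\beta,
\]
as recorded in the proof.

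The remaining step is a routine algebraic expansion. Substituting $\alpha$ and $\beta$ and collecting terms one obtains
\[
-2nm\,\alpha = -2an^3m - 2atnm^3 - 4btn^2m^2,
\]
\[
-(n^2+tm^2)\,\beta = -bn^4 - 2btn^2m^2 - bt^2m^4 - 2an^3m - 2atnm^3,
\]
whose sum is $-[\,bn^4 + 6btn^2m^2 + 4atnm^3 + 4an^3m + bt^2m^4\,]$. Dividing by $s$ gives the claimed formula for $z^4$, and hence for $lk(\bar M^t_{a,b})$ in $\Q/\Z$.

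I do not expect any real obstacle here: all the topology (orientation conventions, the identifications $\sigma^*(x)=-bu$, $\sigma^*(y)=au$, the explicit $z$, and the characteristic-number computation) was done once and for all in \pref{KSCirclespin}, and the linking-form identification reduces the corollary to the bookkeeping of $s z^4$. The only point that deserves care is the sign of $s$ and the parity constraint on $m$: the formula is stated modulo $\Z$ with the \emph{same} choices of $m,n$ as in \pref{KSCirclespin}, so that when $b$ is odd the choice $m$ odd is available and the quadratic form obtained really is the linking form associated with the spin structure used in the Kreck-Stolz formulas.
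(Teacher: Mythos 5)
Your proposal is correct and follows exactly the paper's route: the corollary is obtained by identifying $lk$ with the characteristic number $z^4$ and reading off $s\,z^4=-2nm\,\alpha-(n^2+tm^2)\,\beta$ from the computation already done for the disk bundle in Proposition~\ref{KSCirclespin}, then expanding. Your algebraic expansion agrees with the stated formula, so nothing further is needed.
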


\begin{rem*}
The diffeomorphism classification of the manifolds $L_{a,b}=\bar
M^0_{a,b}$ was carried out in \cite{KS1}. For comparison, note that
\pref{samespin} implies that $x,y\in H^2(\bar N_t,\Z)$ are the
transgressions in the fiber bundle $\T^2\to \bar P_t\to\bar N_t$ of
the natural basis of $H^1(\T^2,\Z)$ corresponding to the splitting
$\T^2=\diag (e^{i\theta},e^{i\psi})\subset\S^1\times\S^3$. This is
the basis of $H^2(\bar N_t,\Z)$ used in \cite{KS1}.  Notice though
that in their notation $a$ and $b$ need to be switched (and hence
$n$ and $m$  as well).
\end{rem*}

\bigskip

\section{Comparison of invariants}\label{examples}

\bigskip

In this Section we discuss various diffeomorphisms that one obtains
by comparing Kreck-Stolz invariants. Our main interest are
diffeomorphisms with positively curved Eschenburg spaces, which can
only exist for the manifolds $S_{a,b}\,,\,M^t_{a,b}$ and some of the $\bar M^t_{a,b}$ ($b$ even).
 Finally we also discuss diffeomorphism between various manifolds that admit
Einstein metrics.

\subsection{Sphere bundles $\mathbf{ S_{a,b}}$}

In order to find sphere bundles $S_{a,b}$ diffeomorphic to a positively curved Eschenburg space we use the following strategy.
 As the Kreck-Stolz invariants for the Eschenburg spaces are quite complicated we compare sphere bundles $S_{a,b}$ to
  some fixed Eschenburg space $E_{k,l}$.    We specify the invariants of $E_{k,l}$ as follows:
$$
r=|\sigma_2(k)-\sigma_2(l)|  \ \   ;  \  s_i(E_{k,l}) \equiv \frac{A_i}{B_i} \mod 1 \ ; \  E_1:=\frac{224 r}{B_1}\,A_1 \ ; \  E_2:= \frac{24 r}{B_2}\,A_2 \ ; \  E_3:=  \frac{6 r}{B_3}\,A_3
$$
where  $A_i,\,B_i \in \Z,\,(A_i,B_i) = 1$ for $i=1, 2, 3$.

\bigskip

\no We now describe under which conditions they are diffeomorphic to sphere bundles.

\begin{thm}\label{Ediffeo}
A positively curved Eschenburg space $E_{k,l}$ is diffeomorphic to an $\Sph^3$ bundle over $\CP^2$ if and only if:
\begin{itemize}
  \item[(a)]  $ 224 \, r  \equiv 0 \mod B_1  \ ; \ 24 \,r \equiv  0 \mod B_2  \ \text{ and }\  6 \, r  \equiv 0 \mod B_3  $.
  \item[(b)] $E_1, E_2, E_3+1$ are even integers, and $E_3-E_2-3 \equiv  0 \mod 3 r$.
  \item[(c)]  $r+E_1 \text{ has a square root mod } 224\,  r$ such that
    $[r+E_1]^{1/2}_{224\, r} + E_2 -1 \equiv 0 \mod 8\, r$.
\end{itemize}
If these conditions are satisfied,  $E_{k,l}$  is diffeomorphic to $S_{a,a-r}$  if and only if
 $$a \equiv \frac{r+ 15\, [r+E_1]^{1/2}_{224\, r}}{2} +7 E_2 -8 \mod 168\, r  $$   for any square root satisfying (c).
If $r$ is negative, the diffeomorphism is orientation reversing.
\end{thm}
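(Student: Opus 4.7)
The plan is to apply Theorem~\ref{KSdiffeo} to reduce the problem to matching the three Kreck-Stolz invariants. Since the cohomology groups of $E_{k,l}$ and $S_{a,b}$ must agree, one needs $|a-b|=r$; interchanging $a,b$ corresponds to reversing orientation, so I will assume $a-b=r>0$, treating the case $r<0$ as the orientation-reversing case. Setting $m:=a+b$ (so $a=(m+r)/2$), and using that $r$ is odd for any Eschenburg space, it follows that $m$ must be odd.

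Substituting the formulas of Proposition~\ref{KSsphere} into $s_i(S_{a,b})\equiv A_i/B_i\pmod 1$ and clearing denominators yields the system
\begin{align*}
(m+2)^2 &\equiv r+E_1 \pmod{224\,r},\\
m &\equiv -1-E_2 \pmod{24\,r},\\
m &\equiv 2-E_3 \pmod{6\,r}.
\end{align*}
Integrality of $E_1,E_2,E_3$ forces $B_1\mid 224r$, $B_2\mid 24r$, $B_3\mid 6r$, which is condition~(a). Reducing each congruence mod $2$, with $m$ and $r$ both odd, I obtain that $E_1,E_2$ are even and $E_3$ is odd. Compatibility of the two linear congruences mod $\gcd(24r,6r)=6r$ gives $E_3-E_2-3\equiv 0\pmod{6r}$, and since the parity constraints force $E_3-E_2-3$ to be even while $r$ is odd, this reduces to the stated condition mod $3r$, completing~(b). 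Finally, consistency of the quadratic congruence with the second one mod $\gcd(224r,24r)=8r$ amounts to requiring a square root $\tau$ of $r+E_1$ mod $224r$ such that $\tau\equiv 1-E_2\pmod{8r}$, which is exactly~(c).

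For sufficiency, given~(a)--(c) and such a $\tau$, I will set $m:=15\tau+14E_2-16$, so that $a=(r+15\tau)/2+7E_2-8$. The key identity
\[
\bigl(15\tau+14(E_2-1)\bigr)^2-\tau^2 \;=\; 28\,(\tau+E_2-1)\bigl(8\tau+7(E_2-1)\bigr),
\]
together with $\tau+E_2-1\equiv 0\pmod{8r}$, shows $(m+2)^2\equiv\tau^2\equiv r+E_1\pmod{224r}$; and the identity $m+1+E_2=15(\tau+E_2-1)$ gives $m\equiv -1-E_2\pmod{120r}$, in particular mod $24r$. Condition~(b) then forces the third congruence, so Theorem~\ref{KSdiffeo} produces the diffeomorphism $E_{k,l}\cong S_{a,a-r}$. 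The statement that $a$ is determined only mod $168\,r$ will follow from the refined diffeomorphism criterion of Corollary~\ref{hdclass}, which guarantees that $S_{a,a-r}$ and $S_{a+168r,\,a+168r-r}$ are diffeomorphic (even though naive CRT would only give determination mod $336\,r$).

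The main obstacle is the quadratic congruence mod $224r$: recognising square-root existence with the $8r$-compatibility in (c) as precisely the right coupling between $s_1$ and $s_2$, and then producing the explicit simultaneous solution $m=15\tau+14E_2-16$ that bypasses an awkward CRT computation. The remainder is careful bookkeeping of parities and moduli. A minor subtlety, which I will note but not dwell on, is that different valid choices of $\tau$ may yield different residues of $a$ mod $168r$, but each of them produces an $S_{a,a-r}$ diffeomorphic to $E_{k,l}$, so no contradiction arises.
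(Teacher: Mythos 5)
Your reduction is the same as the paper's: both arguments feed \pref{KSsphere} into \tref{KSdiffeo} and analyze the resulting three congruences for $m=a+b=2a-r$, extracting (a) from integrality, (b) from parity plus compatibility of the two linear congruences, and (c) from compatibility of the quadratic congruence with the one mod $24\,r$. Where you differ is in the handling of the square root: the paper works in the ``only if'' direction and invokes the claim that $e^2\equiv f^2 \bmod 4n$ forces $e\equiv\pm f \bmod 2n$ (which, as stated, fails when $n$ has several odd prime factors, e.g.\ $e=7$, $f=13$, $4n=60$), whereas you get necessity of (c) by simply taking $\tau=m+2$, and you verify sufficiency directly through the factorization $(15\tau+14(E_2-1))^2-\tau^2=28(\tau+E_2-1)\bigl(8\tau+7(E_2-1)\bigr)$ together with $m+1+E_2=15(\tau+E_2-1)$. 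This is cleaner and sidesteps that issue. (One minor unchecked point: $\tau$ is odd because $\tau^2\equiv r+E_1$ is odd by (b), so $a=\frac{r+15\tau}{2}+7E_2-8$ is indeed an integer.)

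The one genuine loose end is the ``only if'' half of the final congruence for $a$. \cref{hdclass} tells you that $S_{a,a-r}$ and $S_{a+168r,\,a+168r-r}$ are diffeomorphic, which extends your sufficiency from the exact value $a=\frac{r+15\tau}{2}+7E_2-8$ to its whole class mod $168\,r$; it does not show that every $a$ with $S_{a,a-r}$ diffeomorphic to $E_{k,l}$ lies in such a class (its diffeomorphism criterion is not simply a congruence mod $168\,r$), so ``$a$ is determined only mod $168\,r$'' does not follow from it. You need one more line, which your necessity analysis already supplies: if $S_{a,a-r}\simeq E_{k,l}$, set $\tau=m+2$; then $\tau$ is a square root satisfying (c), and $\bigl(\frac{r+15\tau}{2}+7E_2-8\bigr)-a=7(m+E_2+1)\equiv 0 \bmod 7\cdot 24\, r=168\,r$, so $a$ is congruent to the stated formula for this $\tau$. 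With that line added your argument is complete, and it is consistent with reading ``for any square root satisfying (c)'' as describing the union of the corresponding residue classes, which is also how the different valid choices of $\tau$ you mention are reconciled.
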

\begin{proof}
Using \pref{KSsphere} and \tref{KSdiffeo}, it follows that  $E_{k,l}$  is diffeomorphic to $S_{a,a-r}$ if and only if
\begin{equation}\label{equal}
\begin{aligned}
\frac{A_1}{B_1} &-   \frac{(2a-r+2)^2-r}{224r}=:x\in\Z  \\
\frac{A_2}{B_2} &+ \frac{(2a-r+1)}{24r}=:y\in\Z  \\
\frac{A_3}{B_3} &+ \frac{(2a-r-2)}{6r}=:z\in\Z
  \end{aligned}
  \end{equation}
  This implies in particular the divisibility condition in part (a). We  rewrite \eqref{equal} as follows:
  \begin{equation}\label{equal2}
\begin{aligned}
(2a-r+2)^2 &= E_1+r-224 r x   \\
 2a-r+1 &= -E_2+24ry  \\
 2a-r-2 &= -E_3+6rz
  \end{aligned}
  \end{equation}
  Since $r$ is odd for an Eschenburg space and since $a$ is an integer, this implies that $E_1, E_2, E_3+1$ are even and that $E_1+r$ has a square root mod $ 224r$.
  A solution to the second equation in \eqref{equal2} is a solution to the third equation if and only if $6r (4y-z)=3+E_2-E_3$. Thus, if the divisibility condition in part (b) is satisfied, we can find an integer $z$ for any integer solution $y$. We therefore set $$a= - \frac{1}{2} (E_2-r+1)+ 12 r y.$$ Next we observe that $e^2\equiv f^2 \mod 4n$ implies that $e\equiv \pm f \mod 2n$. Thus, if we let $S$ be a particular choice of a square root of $E_1+r$ mod $ 224r$,  \eqref{equal2}  implies that  $$1-E_2+24ry=S+112rx'$$ for some $x'\in\Z$ and hence $$S+E_2-1=8r( 3y-14x') .$$ This implies the divisibility condition in part (c) and if we let $ \alpha = \frac{1}{8\, r} \, ( S + E_2 -1 )$, we get $3y=\alpha+14x'$ or $y=5\alpha+14x''$. Thus
  $$
  a= - \frac{1}{2} (E_2-r+1)+ 60 r\alpha + 168rx''=- \frac{1}{2} (E_2-r+1)+ \frac{15}{2} \, ( S + E_2 -1 ) + 168rx''
  $$
  which finishes our proof.

\end{proof}

Since there are infinitely many Eschenburg spaces (not necessarily
positively curved) for a given $r$, it is conceivable that every
$\Sph^3$ bundle over $\CP^2$ is diffeomorphic to some Eschenburg
space. But in \cite{CEZ} it was shown that  for a given $r$, there
are only finitely many positively curved Eschenburg spaces. Our main
interest are diffeomorphisms of sphere bundles with positively
curved Eschenburg spaces.  We  use the computer program in
\cite{CEZ} to compute the invariants $\frac{A_i}{B_i}$ for an
Eschenburg space and write another program to find spaces satisfying
the conditions in \tref{Ediffeo}. A preliminary strong restriction
is used in a search, since the computation of the invariants
$\frac{A_i}{B_i}$ is time consuming.
 According to \cref{linkingS3}, a sphere bundle has standard linking form.
 This implies that only Eschenburg spaces with $\sigma_3(k)-\sigma_3(l)=\pm 1\mod r$
 can possibly be sphere bundles. This turns out to be a very strong restriction.

\smallskip

The invariants for Aloff-Wallach spaces have much simpler
expressions and there are many solutions where $W_{p,1}$ is
diffeomorphic to a sphere bundle, in addition to the natural
diffeomorphisms in \pref{sameAW}.  For example, for  $W_{1,1}$ one
has $s_1=\frac{1}{112}, s_2=\frac{-1}{36}, s_3=\frac{1}{18}$ and
hence $E_1=6, E_2=-2, E_3=1$. There are $ 8$ square roots of
$E_1+r=9$ mod $224\,r$, but only the values $3$ and $627$ satisfy
the divisibility condition. For $r<0$ there are no solutions. Thus
$W_{1,1}$ is diffeomorphic to a sphere bundle $S_{a,a-r}$ if and
only if $a\equiv 2 \text{ or } 146\mod 504$, whereas only the
diffeomorphism with $S_{2,-1}$ is a natural one.

\smallskip

There are also Aloff-Wallach spaces other than $W_{p,1}$ which are diffeomorphic to a sphere bundle. For example, $W_{56 , 103}$, and thus $r=19 513$,  is  diffeomorphic to $S_{a,a-r}$   if and only if
 $a  \equiv 273 181 \mod
3 278 184 $.

\smallskip

Among the $14 388$ positively curved Eschenburg spaces for $r<1000$  there are, in addition
  to the $19$ Aloff-Wallach spaces $W_{p,1}$,  $13$ which satisfy the conditions in \tref{Ediffeo}, see
  Table \ref{SBE}. We can now turn this around and state that among sphere bundles with $r<1000$, only the
  ones listed in Table \ref{SBE} are diffeomorphic to a positively curved Eschenburg
  space.

  In the Tables an entry  $r^*$ indicates that the
  diffeomorphism is orientation reversing.

\smallskip

\subsection{Circle bundles $\mathbf{M^t_{a,b}}$}
In this case one is not able to obtain a simple characterization of
which Eschenburg spaces are diffeomorphic to such circle bundles
since the invariants in \pref{KSCircle} are too complicated. As we saw
before such diffeomorphisms can only exists when $t$ is odd
since $\pi_4(M^{2t}_{a,b})=\Z_2$. We thus limit the search to sample
diffeomorphisms. Recall though that there are a number of natural
diffeomorphism of ${M^t_{a,b}}$ with the positively curved
Aloff-Wallach spaces. There are also many other diffeomorphisms with
Aloff-Wallach spaces. For example, $W_{1,1}$ is diffeomorphic to
$M^t_{a,b}$ with $[a,b,t]= [-11 ,2 , 35] , [-21 ,1 ,503 ] $ or $
[-25 ,1 ,647]$. Similarly, for the circle bundles $F_{p,q}$. For
example, $F_{3,1}$ is diffeomorphic to $M^t_{a,b}$ with $[a,b,t]=
[-189 ,4 ,2281 ]  $ or $[-111 ,4 ,799 ]$.

To find other diffeomorphisms we fix bounds $A,B$,  produce a list
of all circle bundles $M^t_{a,b}$ with $r<A$ and  $|a|,|b|<B$
(letting $t$ become large) and compute their Kreck-Stolz invariants.
Similarly, for all positively curved Eschenburg spaces with  $r<A$.
We then compare the two lists to produce diffeomorphic pairs. In
case where the bounds are $r\le 101,\ |a|,|b|\le 1000$, there are a
total of 316 diffeomorphism which are not of natural type, 301 are
with Aloff-Wallach spaces, 10 with the bundles $F_{p,q}$ and the
remaining 5 are listed in Table \ref{CBE}.

Although $\bar M^t_{a,b}$ is also a spin manifold, a Maple search
did not produce any examples which are diffeomorphic to positively
curved Eschenburg spaces.

\smallskip

\subsection{Einstein manifolds}
Among the manifolds we discussed, there are 3 classes which are
known to admit Einstein metrics:

\begin{itemize}
\item \cite{W}\ \ \ The homogeneous Aloff-Wallach
spaces $M^1_{a,b}=W_{a,b}$.
\medskip
\item \cite{WZ} The circle bundles $\bar M^0_{a,b}=L_{a,b}$ with base
$\CP^2\times\CP^1$.
\medskip
\item \cite{Che} The
3-sphere bundles $S_{a,b}$, $\bar S_{a,b}$ whose structure group
reduces to $\T^2\subset\SO(4)$.
 \end{itemize}
 From the discussion in Section 1, it
follows that  a reduction to $\T^2$ is possible if and only if
$p_1(P_-)$ and $p_1(P_+)$ are squares. To relate our notation to the
one in \cite{Che}, recall that under the two fold cover a circle of
slope $(a,b)$ in $\S^3\times\S^3$ is sent to a circle of slope
$(a-b,a+b)$ in $\SO(4)$. This easily implies that the manifolds in
\cite{Che}, parametrized by $q_1,q_2\in\Z$ in his notation, are the
manifolds $S_{a,b}$ with $4a+1=(q_1+q_2)^2$ and $4b+1=(q_1-q_2)^2$
in the case of $q_1+q_2$ odd, and the manifolds $\bar S_{a,b}$ with
$4a=(q_1+q_2)^2$ and $4b=(q_1-q_2)^2$ in the case of $q_1+q_2$ even.
In both cases $r=a-b=q_1q_2$. For convenience, we denote these
Einstein manifolds by $C_{q_1,q_2}$ if $q_1+q_2$ is odd, and $\bar
C_{q_1,q_2}$ if $q_1+q_2$ is even.

\smallskip
Comparisons of Einstein manifolds within each class have been
carried out before. In the second and third case the diffeomorphism
classification is given by simple congruences, see \cite{KS1,Che},
and each space is diffeomorphic to infinitely many other Einstein
manifolds.   For example:

\begin{itemize}
\item The Einstein manifold $L_{a,b}$ is diffeomorphic to $L_{a,b'}$ if  $b\equiv b' \mod 56\,
a^2$.
\item  The Einstein manifold $C_{q_1,q_2}$ (resp. $\bar C_{q_1,q_2}$) is diffeomorphic
to $C_{q_1',q_2'}$ (resp. $\bar C_{q_1',q_2'}$) if
$r=q_1q_2=q_1'q_2'$  and $q_1^2+q_2^2\equiv q_1'^2+q_2'^2\ \mod
672 \, r$.
 \end{itemize}

\smallskip
For the Aloff-Wallach spaces on the other hand diffeomorphisms among
each other are very rare, see \cite{KS2}.

Using our results, we can compare Einstein manifolds that belong to
different classes. We first observe:

\smallskip

\begin{itemize}
\item There are no diffeomorphisms between the spin  Einstein manifolds
$C_{q_1,q_2}$ and either $W_{a,b}$ or $L_{a,2b}$ since in the
first case $r$ is even, and in the other two cases $r$ is odd.
\medskip
\item There are no diffeomorphisms between the spin Einstein manifolds
 $W_{a,b}$ and $L_{a,2b}$ since in the first case $p_1=0$ and in
 the second case $p_1=3(2b)^2\mod a^2$ which can never be $0$
 since $(a,b)=1$ (see \pref{circletop} and
 \pref{circletopspin}).
 \end{itemize}

\medskip

But among the non-spin Einstein manifolds $\bar C_{q_1,q_2}$ and
$L_{a,2b+1}$ there are some diffeomorphisms:

\smallskip

\begin{itemize}
\item The Einstein manifold  $\bar C_{q,q}\simeq \bar S_{q^2,\,0}$
 is naturally diffeomorphic to the Einstein manifold $L_{q,\,
1}$ (see \pref{diffspin}).
\medskip
\item The Einstein manifold $\bar C_{10,\, 490}\simeq \bar S_{62500,\, 57600}$, is
 diffeomorphic to the Einstein manifold $L_{ 70,\, 5899}$.
 \end{itemize}

 \medskip
Further examples are difficult to find. Indeed, if a diffeomorphism
between $C_{q_1,q_2}$ and $L_{a,b}$ exists, then $q_1q_2$ must be a
square, and the linking form of $L_{a,b}$ must be standard. Both of
these are strong restrictions.

 \medskip

 We finally remark that the spin Einstein manifolds $C_{q_1,q_2}$
 can never be diffeomorphic to an Eschenburg space since in the
 first case $q_1+q_2$ is odd, and hence $r=q_1q_2$ is even, whereas
  for Eschenburg space $r$ is always odd. Notice also that $L_{a,\,
  b} $ can never be diffeomorphic to an Eschenburg space since
  $\pi_4(L_{a,\, b})=\Z_2$.

  \smallskip

 For the convenience of the reader,  the Maple program
 that computes the invariants is available at
 www.math.upenn.edu/wziller/research.

\renewcommand{\thetable}{\Alph{table}}

\renewcommand{\arraystretch}{1.6}
 {\setlength{\tabcolsep}{0.2cm}
\stepcounter{equation}
\begin{table}[htpb]
\begin{center}
\begin{tabular}{|c|c|c|c|c|c|}
\hline
$r$ & $[k_1,k_2,k_3 \;\vert\; l_1,l_2,l_3]$& $[a ] \mod 168\, r$ & $s_1$ &
$s_2$ & $s_3$ \\
\hline \hline   $41^{*}$ & $[2,3,7\;\vert\;  12,0, 0]$ & $[2285, 5237]$ & 115/287 & 65/164 & -33/82 \\
\hline   $127$  &  $[17,16,-7\;\vert\;  14, 12, 0]$ & $[17230]$ &  3489/14224 & -403/1524 & -41/762 \\
\hline  $233$ & $[5,3,-31 \;\vert\;  -23,0, 0]$ &  $[2943, 36495]$ & -1863/6524 & -31/2796 &  -59/1398 \\
\hline $289^*$ & $[21, 18, -13 \;\vert\; 16, 10, 0]$ &  $[21194, 42002 ]$ & -397/1156 & 121/1734 & 481/1734 \\
\hline $611$ & $[25, 17, -23\;\vert\;  14,5, 0]$ &  $[69423, 84087 ]$ &-15789/68432& -1565/3666& 1075/3666\\
\hline $617$ & $[24, 19, -23\;\vert\;  14,6, 0]$ & $[13030  ]$ &-3567/8638& 1043/3702& 473/3702 \\
\hline $661$ & $[23, 21, -26\;\vert\;  18,0, 0]$ & $[56346, 72210  ]$ &1787/37016& -41/661& -327/1322 \\
\hline $673^*$ & $[25, 14, -25\;\vert\;  8, 6, 0]$ & $[49154,81458  ]$ &-529/2692& 181/4038& 721/4038 \\
\hline $751^* $ & $[33, 33, -20\;\vert\; 26, 20, 0]$ & $[7036, 43084  ]$ &-12629/84112& -2351/9012& -199/4506 \\
\hline $911^*$ & $[69, 65, -13\;\vert\; 63, 58, 0]$ & $[123457, 145321 ]$ & -7445/102032& 1375/5466& 31/5466 \\
\hline$ 911^*$ & $[23, 23, -31\;\vert\; 14, 1, 0]$ & $[1457, 132641 ]$ & 31083/102032& 167/1822& 667/1822 \\
\hline $929^*$ & $[41, 17, 4\;\vert\;  62, 0, 0]$ & $[22359, 44655 ]$ &1441/6503& 401/11148& 805/5574\\
\hline $991^*$ & $[51, 45, -19\;\vert\; 43, 34, 0]$& $[18113,89465] $ & -44333/110992& 2863/5946& -443/5946 \\
\hline
\end{tabular}
\end{center}
 \vspace{0.2cm}
     \caption{  Sphere bundles $S_{a,a-r}$  with $r<1000$ which are diffeomorphic to positively curved Eschenburg Spaces
     $E_{k,l}$ other than $W_{p,1}$.}\label{SBE}
 \end{table}

\renewcommand{\arraystretch}{1.4}
 {\setlength{\tabcolsep}{0.2cm}
\stepcounter{equation}
\begin{table}[htpb]
\begin{center}
\begin{tabular}{|c|c|c|c|c|c|}
\hline
$r$ & $[k_1,k_2,k_3 \;\vert\; l_1,l_2,l_3]$& $[a,b,t]$ & $s_1$ &
$s_2$ & $s_3$ \\
\hline
\hline  17 & $[1, 2, 5\;\vert\;  8,0, 0]$ & $[638 ,-607 ,-403 ]$ &-201/952& 55/204& 23/102 \\
\hline  25 & $[1,2,-9\;\vert\;  -6,0, 0]$ & $[621, -614, -7781 ]$ &19/50& -3/10& -17/50\\
\hline  33 & $[1, 1, 16\;\vert\;  18,0, 0]$ & $[805, -632, -17 ]$ & 47/308& -125/396& 53/198 \\
\hline  $41^*$ & $[2,3,7 \;\vert\;  12,0, 0]$ & $[580, -579, -335861 ]$&  -115/287& -65/164& 33/82\\
\hline  $41^*$ & $[2,3,7\;\vert\;  12,0, 0]$ & $[405, -404, -163661 ]$ &-115/287& -65/164& 33/82 \\
\hline
\end{tabular}
\end{center}
 \vspace{0.2cm}
     \caption{ Circle bundles $M^t_{a,\, b }$, other than $W_{p,q},\ F_{p,q}$, diffeomorphic to positively curved Eschenburg Spaces
     $E_{k,l}$.}\label{CBE}
 \end{table}

\vspace{10cm}

\providecommand{\bysame}{\leavevmode\hbox
to3em{\hrulefill}\thinspace}

\end{document}